\numberwithin{equation}{section}
\numberwithin{figure}{section}
\numberwithin{table}{section}
\theoremstyle{plain}
\newtheorem{theorem}{Theorem}[section]
\newtheorem{prop}[theorem]{Proposition}
\newtheorem{lemma}[theorem]{Lemma}
\newtheorem{cor}[theorem]{Corollary}
\newtheorem{conj}[theorem]{Conjecture}
\newtheorem{fact}[theorem]{Fact}
\theoremstyle{definition}
\newtheorem{definition}[theorem]{Definition}
\newtheorem{notation}[theorem]{Notation}
\theoremstyle{remark}
\newtheorem{remark}[theorem]{Remark}
\newtheorem{example}[theorem]{Example}
\newtheorem{counter}[theorem]{Counterexample}
\DeclareMathOperator{\Aut}{Aut}
\DeclareMathOperator{\Inn}{Inn}
\DeclareMathOperator{\Out}{Out}
\DeclareMathOperator{\Stab}{Stab}
\DeclareMathOperator{\Span}{Span}
\DeclareMathOperator{\Hom}{Hom}
\DeclareMathOperator{\Syl}{Syl}
\newcommand{\id}{\mathrm{id}}
\newcommand{\Mor}{\mathrm{Mor}}
\newcommand{\iso}{\mathrm{iso}}
\newcommand{\ph}{\varphi}
\newcommand{\x}{\times}
\renewcommand{\tilde}{\widetilde}
\newcommand{\Mid}{\bigm|}
\newcommand{\lc}[1]{\prescript{#1\!}{}{}}
\newcommand{\xra}{\xrightarrow}
\def\cF{\mathcal F}
\def\cL{\mathcal L}
\def\cN{\mathcal N}
\def\sI{\mathscr I}
\def\fg{\mathfrak g}\def\fh{\mathfrak h}\def\fk{\mathfrak k}
\def\fI{\mathfrak I}
\def\proj{\mathfrak{pr}}
\def\ZZ{\mathbb Z}
\def\id{\mathrm{id}}
\DeclarePairedDelimiter{\abs}\lvert\rvert
\renewcommand{\theenumi}{(\roman{enumi})}\renewcommand{\labelenumi}{\theenumi}
\tikzset{dot/.style={circle,fill=black,thick,inner sep=0pt,minimum size=1mm,draw}}
\tikzset{arrow/.style={semithick,>=stealth',shorten >=1pt,shorten <=1pt}}
\tikzset{equal/.style={arrow,double distance=2pt}}
\title{Minimal characteristic bisets for fusion systems}
\author[M. Gelvin]{Matthew Gelvin}
\email{mgelvin@gmail.com}
\author[S. P. Reeh]{Sune Precht Reeh}
\address{Department of Mathematical Sciences,  University of Copenhagen, Copenhagen, Denmark}
\email{spr@math.ku.dk}
\subjclass[2010]{}
\thanks{Supported by the Danish National Research Foundation through the Centre for Symmetry and Deformation (DNRF92).}
\begin{document}

\begin{abstract}
  {} We show that every saturated fusion system $\cF$ has a unique minimal $\cF$-characteristic biset $\Lambda_\cF$.  We examine the relationship of $\Lambda_\cF$ with other concepts in $p$-local finite group theory:  In the case of a constrained fusion system, the model for the fusion system is the minimal $\cF$-characteristic biset, and more generally, any centric linking system  can be identified with the $\cF$-centric part of $\Lambda_\cF$ as bisets.  We explore the grouplike properties of $\Lambda_\cF$, and  conjecture an identification of normalizer subsystems of $\cF$ with subbisets of $\Lambda_\cF$.
\end{abstract}

\maketitle

\section{Introduction}

{
\renewcommand\thetheorem{\Alph{theorem}}

If $S$ is a Sylow $p$-subgroup of a finite group $G$, we talk about the fusion system $\cF_S(G)$ as an organizational framework for understanding the $p$-local structure of $G$.  The fusion data is encoded as a category:  The objects of $\cF_S(G)$ are the subgroups of $S$, and the morphisms are the maps between subgroups induced by conjugation in $G$.
More generally, Puig introduced the notion of an abstract fusion system on $S$:  This is again a category $\cF$ with objects the subgroups of $S$ and morphism certain injective group maps between subgroups (see Section \ref{secFusionSystems}).

An abstract fusion system does not necessarily arise from a group in this manner, but we still think of the morphisms in $\cF$ as given by the conjugation action of some grouplike object on the subgroups of $S$. The notion of a characteristic biset turns this perspective around, and considers how $S$ acts on the object that does the conjugating.

For $S\in\Syl_p(G)$ and the fusion system $\cF_S(G)$ realized by $G$'s conjugation action on $S$, we can  ask how $S$ acts on $G$ by left and right multiplication.  That is, we consider the $(S,S)$-biset $_SG_S$.  For $g\in G$, if $(b,a)\in S\times S$ is such that  $b\cdot g=g\cdot a$, then $b={}^g a$.  In other words, fusion data ($b={}^ga$) is encoded in the biset structure ($b\cdot g=g\cdot a$).  This justifies calling $_SG_S$ a \emph{characteristic biset} for  $\cF_S(G)$.

Linckelmann and Webb extracted the features of $_SG_S$ that are essential for understanding the fusion system $\cF_S(G)$, resulting in a notion of characteristic bisets for any abstract fusion system $\cF$.
Fix a $p$-group $S$, a fusion system $\cF$ on $S$, and an $(S,S)$-biset $\Omega$. $\Omega$ is then a characteristic biset for $\cF$ if:

\begin{itemize}
\item[(0)] $\Omega$ is free both as a left and right $S$-set. \vspace{5pt}
\begin{itemize}
\item[]This implies that any $\omega\in\Omega$ has  stabilizer $\{(b,a)\in S\times S\mid b\cdot\omega=\omega\cdot a\}$ of the form $(P,\varphi):=\{(\varphi(a),a)\mid a\in P\}$ for $P$ a subgroup of $S$ and $\varphi\colon P\hookrightarrow S$ some group injection.

Heuristically, this says that $\omega$ ``conjugates'' $a$ to $\varphi(a)$.
\end{itemize}\vspace{5pt}
\item[(1)]  If $\omega\in\Omega$ has stabilizer $(P,\varphi)$, then $\varphi$ is a morphism of $\cF$.\vspace{5pt}
\begin{itemize}
\item[]This means that all the conjugation induced by $\Omega$ is in $\cF$.
\end{itemize}\vspace{5pt}
\item[(2)]  For subgroups $P$  of $S$, $\cF$-morphisms $\varphi\colon P\to S$, and $\cF$-isomorphisms $\eta_1\colon Q\xrightarrow{\cong} P$, $\eta_2\colon \varphi P\xrightarrow{\cong} R$, there is an equality of fixed-point set orders:  $\abs*{\Omega^{(P,\varphi)}}=\abs*{\Omega^{(Q,\eta_2\varphi\eta_1)}}$.\vspace{5pt}
\begin{itemize}
\item[]This condition generalizes the fact that, if $G$ acts on a set $X$, then conjugate subgroups of $G$ have fixed-point sets of equal size.
\end{itemize}\vspace{5pt}
\item[(3)]  $\abs\Omega/\abs S$ is prime to $p$.\vspace{5pt}
\begin{itemize}
\item[]This Sylow condition generalizes $S\in\Syl_p(G)$.
\end{itemize}
\end{itemize}

The connection between a fusion system $\cF$ and an associated $\cF$-characteristic biset is very strong:
\begin{itemize}
\item If $\cF$ is \emph{saturated} (i.e., it satisfies the axioms needed to make $\cF$ look like the fusion induced by a finite group), then there exists a characteristic biset (\cite{BLO2}).
\item If a characteristic biset for $\cF$ exists, then $\cF$ is saturated (\cite{PuigBook}, also see \cite{KSFusionBurnside} for a $p$-localized version).
\item As suggested by Axioms (1) and (2), the characteristic biset  determines $\cF$.
\end{itemize}
If we allowed ourselves to think about virtual bisets in the double Burnside ring of $S$, and $p$-localized, then the converse to the last point would be true:  Every saturated fusion system determines and is determined by a unique \emph{characteristic idempotent} in $A(S,S)_{(p)}$, see \cite{RagnarssonClassifyingSpectra}. However, motivated by Park's Theorem that an $\cF$-characteristic biset  gives rise to ambient, finite (but not necessarily Sylow) supergroup realizing $\cF$ (\cite{ParkSingleGroupFusion}) and subsequent work investigating smallest $\cF$-characteristic biset orders in certain examples (\cite{ParkMinimalCharacteristicBisets}), we will opt to instead remain in the world of honest bisets.

For us then, the uniqueness of $\cF$-characteristic bisets always fails:   If $G$ and $H$ both contain $S$ as a Sylow $p$-subgroup, and if $\cF_S(G)=\cF_S(H)=:\cF$, then both $_SG_S$ and $_SH_S$ are characteristic bisets for $\cF$, but they need not be equal (e.g., $H=G\times K$ for $K$ your favorite $p'$-group).    While a characteristic biset determines the fusion system, the fusion system does not determine the characteristic biset.

This paper proposes to solve this indeterminacy problem:
In Theorem \ref{parameterization of semicharacteristic bisets} we give a complete parameterization of all $\cF$-characteristic bisets, which in particular implies

\begin{theorem}[Corollary \ref{minimal characteristic biset}]\label{thmIntro:minimal characteristic biset}
Every saturated fusion system $\cF$ has a unique minimal characteristic biset $\Lambda_\cF$.
\end{theorem}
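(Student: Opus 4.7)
My plan is to deduce the existence and uniqueness of $\Lambda_\cF$ from the combinatorial parameterization in Theorem \ref{parameterization of semicharacteristic bisets}, which expresses every $\cF$-characteristic biset as a non-negative integer combination $\Omega = \sum m_{[P,\varphi]}\, [P,\varphi]$ of basic transitive bisets $[P,\varphi] = (S\times S)/(P,\varphi)$, subject to a system of constraints encoding the Linckelmann--Webb axioms (0)--(3). I then partially order $\cF$-characteristic bisets by declaring $\Omega_1 \le \Omega_2$ if and only if $\Omega_2 \cong \Omega_1 \sqcup \Omega'$ as $(S,S)$-bisets, equivalently, if and only if each orbit multiplicity in $\Omega_1$ is bounded above by the corresponding multiplicity in $\Omega_2$. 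A unique minimal $\cF$-characteristic biset is literally a minimum in this order, so the task reduces to producing one and verifying uniqueness.

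The core of the argument is to show that the set $\cB(\cF)$ of all $\cF$-characteristic bisets is closed under pointwise infimum. Given $\Omega_1, \Omega_2 \in \cB(\cF)$, I form $\Omega_0$ with multiplicities $m_{\Omega_0}[P,\varphi] := \min\bigl(m_{\Omega_1}[P,\varphi],\, m_{\Omega_2}[P,\varphi]\bigr)$ and check that $\Omega_0$ is again characteristic. Axioms (0) and (1) pass to any subbiset and are automatic. Axiom (2), the equality of fixed-point orders along $\cF$-conjugacy classes of pairs $(P,\varphi)$, translates by Burnside inversion into a system of linear relations among the multiplicities; the parameterization identifies which multiplicities are free parameters and which are determined by them, and a small verification shows that the resulting constraints are preserved under componentwise minimum. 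The subtle axiom is the Sylow condition (3): here one observes that every orbit $[P,\varphi]$ with $P \lneq S$ contributes the positive multiple $[S:P]$ of $p$ to $|\Omega|/|S|$, so modulo $p$ only the top orbits $[S,\alpha]$ with $\alpha \in \Aut_\cF(S)$ can affect the prime-to-$p$ condition. The parameterization pins these top-stratum multiplicities down in a form uniform across all characteristic bisets, so reducing the lower strata never disturbs (3).

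Once closure under infimum is in hand, the componentwise infimum of \emph{all} elements of $\cB(\cF)$ is well-defined (each multiplicity is a non-negative integer), lies in $\cB(\cF)$ by the above, and is by construction the unique minimum; I name it $\Lambda_\cF$. Existence of at least one characteristic biset, which is needed for this infimum to be a genuine biset rather than vacuous, is the content of the Broto--Levi--Oliver theorem cited in the introduction. The principal obstacle throughout is precisely what the parameterization theorem is designed to overcome: controlling the Sylow axiom (3) under pointwise minimum. Without a priori knowledge that the top-stratum multiplicities are constrained in the same way in every characteristic biset, nothing would prevent the minimum from yielding a total size divisible by $p$; the parameterization removes this obstruction and lets the componentwise minimum land back in $\cB(\cF)$.
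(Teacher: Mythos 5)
There is a genuine gap at the heart of your argument: the claim that $\cB(\cF)$ is closed under componentwise minimum of \emph{orbit} multiplicities. Axiom (2) translates into a system of linear \emph{equalities} $\sum_{[Q,\psi]} m_{[Q,\psi]}\,a_{[Q,\psi]} = \sum_{[Q,\psi]} m_{[Q,\psi]}\,b_{[Q,\psi]}$ among the multiplicities, and componentwise minima of solutions to such equalities are not solutions in general (already $x=y+z$ fails: $\min\bigl((2,1,1),(2,2,0)\bigr)=(2,1,0)$). Equivalently, in the language of Theorem \ref{parameterization of semicharacteristic bisets}: the semicharacteristic bisets form a free monoid on the $\Omega_P$, but distinct basis elements $\Omega_P$, $\Omega_{P'}$ can share orbit types $[Q,\psi]$ for $(Q,\psi)$ subconjugate to both $(P,\iota_P^S)$ and $(P',\iota_{P'}^S)$, so the orbit-wise minimum of $\sum a_P\Omega_P$ and $\sum b_P\Omega_P$ is in general strictly between $\sum \min(a_P,b_P)\Omega_P$ and either summand, and has no reason to lie in the submonoid spanned by the $\Omega_P$ at all. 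The ``small verification'' you defer is exactly where the argument breaks.

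The fix is that no infimum construction is needed once you have Theorem \ref{parameterization of semicharacteristic bisets}: every $\cF$-characteristic biset is $\Omega=\coprod_P c_P\cdot\Omega_P$ with $c_P\in\ZZ_{\geq 0}$ and $p\nmid c_S$, hence $c_S\geq 1$, hence $\Omega_S\subseteq\Omega$ as $(S,S)$-bisets; and $\Omega_S$ is itself characteristic (take $c_S=1$, all other $c_P=0$). So $\Lambda_\cF:=\Omega_S$ is the unique minimum, which is precisely the paper's one-line deduction in Corollary \ref{minimal characteristic biset}. Two smaller remarks: your appeal to Broto--Levi--Oliver for existence is unnecessary, since $\Omega_S$ is constructed explicitly by the stabilization procedure of Theorem \ref{F-basis} applied to $\cF\times\cF$; and while the componentwise infimum over all of $\cB(\cF)$ does happen to equal $\Omega_S$ (every characteristic biset dominates it orbit-by-orbit, and $\Omega_S$ realizes it), your route to showing that this infimum is itself characteristic is the step that does not survive scrutiny.
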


Here, minimality means that if if $\Omega$ is any  $\cF$-characteristic biset, then $\Lambda_\cF\subseteq\Omega$ as $(S,S)$-bisets.  This makes $\Lambda_\cF$  the most natural choice of $\cF$-characteristic biset, and we argue that it should be thought of as \emph{the} characteristic biset by proving several additional Theorems \ref{thmIntro:Models}-\ref{thm:IntroKNormalizers} justifying this choice.

The preliminary Sections \ref{secFusionSystems} and \ref{secBisets} contain the necessary background material for this paper. Corollary \ref{normalizer size in SxS} in particular will play an essential role in identifying  $\Lambda_\cF$.

Section \ref{sec:monoid of F-sets} contains the main technical background relating $S$-sets to $\cF$-fusion needed in our search for $\Lambda_\cF$. If $\cF$ is a saturated fusion system on $S$, and $X$ is a finite $S$-set, we say that $X$ is \emph{$\cF$-stable} if for all $\cF$-morphisms $\varphi\colon P\to S$, we have an equality of fixed-point set orders $\abs*{X^P}=\abs*{X^{\varphi P}}$ (cf. Axiom (2)  for characteristic bisets). Just as the transitive $G$-sets form the basis for commutative monoid of all finite $G$-sets, the first author conjectured that the commutative monoid of $\cF$-stable $S$-sets is free with basis naturally corresponding to the $\cF$-conjugacy classes of subgroups of $S$. The second author proved this in \cite{ReehStableSets}. We recall the defining features of these elements in Theorem \ref{F-basis}, and provide a new proof that they actually form  a basis in Corollary \ref{F-basis cor}.

In Section \ref{sec:parameterization of semicharacteristic bisets} we prove Theorem \ref{thmIntro:minimal characteristic biset} by rephrasing the problem as looking for a particular kind of $\cF\times\cF$-stable $S\times S$-set.

It should be emphasized that the parameterization of $\cF$-characteristic bisets, and hence the construction of $\Lambda_\cF$, relies solely on a straightforward  counting argument, inductively indexed on the objects of $\cF$.  $\Lambda_\cF$ is therefore much easier to get a hold of than most of the objects that appear in $p$-local finite group theory.  Even so, it turns out that there are deep connections between the minimal $\cF$-characteristic biset and other, more complicated structures.  We take this as further evidence for the special role played by $\Lambda_\cF$, and devote the rest of the paper to exploring these connections.

Section \ref{sec:ModelsOfConstrainedFusionSystems} examines the minimal characteristic bisets of constrained fusion systems, which we view as the building blocks from which all fusion systems are glued together.  If $O_p(\cF)$ denotes the maximal normal subgroup of $\cF$ (so that every morphism of $\cF$ extends to induce an automorphism of $O_p(\cF)$), we say that $\cF$ is \emph{constrained} if $C_S(O_p(\cF))\leq O_p(\cF)$. Constrained fusion systems always come from finite groups, and in fact among all finite groups inducing such an $\cF$ there is a well defined minimal example.  This finite group $M_\cF$ is the \emph{model} of $\cF$, which is characterized by requiring that $C_{M_\cF}(O_p(M_\cF))\leq O_p(M_\cF)$.  As the constrained fusion system $\cF$ has both a minimal characteristic biset and a minimal group inducing $\cF$, we might ask about the relationship between the two.

\begin{theorem}[Theorem \ref{models are minimal characteristic bisets}]\label{thmIntro:Models}
If $\cF$ is a constrained fusion system with minimal characteristic biset $\Lambda_\cF$ and model $M_\cF$, then $_S(M_\cF)_S=\Lambda_\cF$ as $(S,S)$-bisets.
\end{theorem}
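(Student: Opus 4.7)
The plan is to prove the equality by establishing both inclusions $\Lambda_\cF \subseteq {}_S(M_\cF)_S$ and ${}_S(M_\cF)_S \subseteq \Lambda_\cF$ as $(S,S)$-bisets.

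The first inclusion is immediate from minimality: since $S$ is a Sylow $p$-subgroup of $M_\cF$ acting freely by left and right multiplication, and $\cF_S(M_\cF) = \cF$ by the defining property of a model, the biset ${}_S(M_\cF)_S$ satisfies Axioms (0)--(3) and is therefore $\cF$-characteristic. Theorem~\ref{thmIntro:minimal characteristic biset} then forces $\Lambda_\cF \subseteq {}_S(M_\cF)_S$.

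For the reverse inclusion I would decompose ${}_S(M_\cF)_S$ into $(S,S)$-orbits via the double-coset decomposition $M_\cF = \bigsqcup_{g \in S \backslash M_\cF / S} SgS$, where each orbit $SgS$ is a transitive $(S,S)$-biset with stabilizer of the form $(S \cap g^{-1}Sg,\, c_g)$, and compare this with the explicit description of $\Lambda_\cF$ supplied by Theorem~\ref{parameterization of semicharacteristic bisets}. The key input is the constrained hypothesis: setting $Q := O_p(\cF) = O_p(M_\cF)$, the condition $C_{M_\cF}(Q) \leq Q$ identifies $M_\cF / Z(Q)$ with $\Aut_\cF(Q) \leq \Aut(Q)$, so that $M_\cF$ is ``as small as possible'' among finite groups realizing $\cF$. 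Consequently $C_{M_\cF}(P) \leq C_{M_\cF}(Q) \leq Q \leq S$, which yields $C_{M_\cF}(P) = C_S(P)$ for every $P \geq Q$, and a downward induction on $\cF$-conjugacy classes of subgroups of $S$ using the fixed-point identity $|({}_S(M_\cF)_S)^{(P,\,\id)}| = |C_{M_\cF}(P)|$ should match the multiplicity of each transitive orbit in ${}_S(M_\cF)_S$ with the minimum multiplicity prescribed for $\Lambda_\cF$.

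The main obstacle is making this coefficient-by-coefficient matching precise: namely, showing that for every $\cF \times \cF^{\op}$-conjugacy class of stabilizer pairs $(P, \varphi)$ with $\varphi \in \cF$, the number of double cosets of $M_\cF$ contributing an orbit of that type equals the minimum multiplicity forced by Axioms~(2) and~(3). I expect this to fall out of the $\cF$-basis structure of stable $S$-sets (Theorem~\ref{F-basis} and Corollary~\ref{F-basis cor}), upgraded to the $S \times S$-setting via $\cF \times \cF^{\op}$-stability, combined with the fixed-point computation above; together they rigidify the multiplicities at every subgroup level and force the equality ${}_S(M_\cF)_S = \Lambda_\cF$.
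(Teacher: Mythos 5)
Your first inclusion $\Lambda_\cF\subseteq{}_S(M_\cF)_S$ is correct and is exactly how the paper starts implicitly (via Example \ref{ex:G is characteristic} and Theorem \ref{parameterization of semicharacteristic bisets}). You have also isolated the right structural input for the converse: normality of $O_p(M_\cF)$ together with $C_{M_\cF}(O_p(M_\cF))\leq O_p(M_\cF)$. But the second half, as you yourself flag, is a genuine gap: you propose to match the multiplicity of \emph{every} orbit type $[P,\varphi]$ in ${}_SM_S$ against ``the minimum multiplicity prescribed for $\Lambda_\cF$,'' yet those minimal multiplicities are not known a priori (computing them is the subject of a separate paper), so the comparison you want to run by downward induction has nothing concrete to compare against. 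The fixed-point identity $\abs{({}_SM_S)^{(P,\id)}}=\abs{C_{M}(P)}$ only controls untwisted diagonal pairs, and knowing $C_M(P)=C_S(P)$ for $P\geq O_p(\cF)$ does not by itself pin down the coefficients of the non-diagonal orbit types.

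The missing idea is that the full coefficient-by-coefficient matching is unnecessary. Writing ${}_SM_S=\coprod_{(P)_\cF}c_P\cdot\Omega_P$ by Theorem \ref{parameterization of semicharacteristic bisets}, properties \ref{itm:cP1}--\ref{itm:nonzerocoeff} of Theorem \ref{F-basis} (and Corollary \ref{cor:minimal Fsets are unique}) show that each $c_P$ is read off from the number of $(S,S)$-orbits whose stabilizer is an \emph{untwisted} diagonal $(P,\iota_P^S)$ with $P$ fully normalized. So it suffices to show: (1) any point-stabilizer of the form $(P,\iota_P^S)$ has $P=S$, and (2) all points with stabilizer $(S,\id)$ lie in one orbit. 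For (1), Proposition \ref{stabilizers contain normal p-subgroups} gives $O_p(M)\leq P$, so an element $m$ fixed by $(P,\iota_P^S)$ satisfies $m\in C_M(P)\leq C_M(O_p(M))\leq O_p(M)\leq S$; hence $m\in Z(S)$, its stabilizer is all of $(S,\id)$, and $P=S$. For (2), such elements lie in $Z(S)\subseteq{}_SS_S$, a single transitive subbiset. This yields $c_S=1$ and $c_P=0$ for $P\neq S$, i.e.\ ${}_SM_S=\Omega_S=\Lambda_\cF$. Your centralizer estimate is exactly the engine of step (1); what you are missing is the reduction, via the basis structure, to counting only the untwisted diagonal stabilizers.
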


In Section \ref{sec:LinkingSystemsAsCharacteristicBisets} we turn to more general fusion systems.  If $\cF$ is not constrained, then there is no particularly good notion of a ``minimal'' group inducing $\cF$; indeed, in the case of exotic fusion systems there may be no finite Sylow supergroup at all.  Even in these cases we can still talk about an associated $p$-local finite group, which is formed by augmenting the fusion system with an auxiliary category $\cL$, the \emph{centric linking system}.  The morphisms of $\cL$ represent group elements whose conjugation actions induce the morphisms of $\cF$; this is made precise in Chermak's notion of a \emph{partial group} (of which $\cL$ is the motivating example), which is effectively a different method of packaging the data of a linking system.

In \cite{ChermakFusionSystemsAndLocalities} it was shown that every saturated fusion system has a unique associated centric linking system, using the Classification Theorem of Finite Simple Groups.  Independent of this result, if we assume that a linking system $\cL$ exists, the axioms governing its structure allow us to define an $(S,S)$-biset structure on the set $\fI$ of nonextendable isomorphisms of $\cL$.  While $\fI$ is not $\Lambda_\cF$, we do have  $\fI\subseteq\Lambda_\cF$ as $(S,S)$-bisets.  Moreover, we can identify $\fI$ as the elements of $\cL$ that conjugate an object of $\cL$ (an \emph{$\cF$-centric subgroup}) into $S$:

\begin{theorem}[Theorem \ref{thm:LinkingSystemsAreCentricPartOfMinimalCharacteristicBiset}]\label{thm:IntroLinkingSystemAsCharacteristicBisets}
If $\cL$ is a centric linking system associated to $\cF$, then the $(S,S)$-biset of nonextendable isomorphisms $\fI$ is the $\cF$-centric part of $\Lambda_\cF$.
\end{theorem}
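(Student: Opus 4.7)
The plan is to identify the $(S,S)$-biset $\fI$ with the $\cF$-centric part of $\Lambda_\cF$ by matching their orbit decompositions. The parameterization from Section \ref{sec:parameterization of semicharacteristic bisets} expresses $\Lambda_\cF$ as a sum of transitive orbits $(S\times S)/(P,\varphi)$ with prescribed minimal multiplicities, indexed by $\cF\times\cF$-conjugacy classes of twisted diagonals $(P,\varphi)$; the $\cF$-centric part is simply the subbiset formed from those orbits whose source $P$ is $\cF$-centric.

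First I would extract the orbit structure of $\fI$. Using the partial-group/linking-system axioms, every nonextendable iso $\psi\colon P\to\psi(P)$ in $\cL$ has $\cF$-centric source and projects under $\pi\colon\cL\to\cF$ to an $\cF$-morphism $\pi(\psi)$. The $(S,S)$-stabilizer of $\psi$ in $\fI$ is precisely the twisted diagonal $(P,\pi(\psi))$: the containment in this direction is immediate from the definition of the action, and the freeness keeping the stabilizer no larger follows from centricity $C_S(P)\le P$ combined with the axiom that each fiber of $\pi$ is a free $Z(P)$-torsor. Consequently
\[
  \fI \;\cong\; \coprod_{[\psi]}(S\times S)\big/(P_\psi,\pi(\psi)),
\]
with all stabilizing subgroups centric, and $\fI$ is $\cF\times\cF$-stable because $\cL$-conjugation bijects nonextendable isos with nonextendable isos, giving the fixed-point identifications required by Axiom (2).

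It then remains to check that the orbit multiplicities agree on each $\cF\times\cF$-conjugacy class of $\cF$-centric twisted diagonals $[(P,\varphi)]$. The parameterization prescribes the minimal multiplicity $c_{(P,\varphi)}^{\min}$ by an inductive fixed-point subtraction against contributions from strictly larger twisted diagonals. On the $\cL$-side, every morphism with $\cF$-centric source factors uniquely as a nonextendable iso followed by an inclusion into a (possibly larger) $\cF$-centric overgroup, which produces a parallel inductive decomposition indexed by the same poset. Combining these decompositions with the $\cF$-stable fixed-point arithmetic of Section \ref{sec:monoid of F-sets}, I would show that both $\fI$ and the centric part of $\Lambda_\cF$ satisfy the same recursion with the same initial data at $\cF$-maximal centric subgroups (where nonextendability is automatic and the multiplicity is $1$), and therefore agree.

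The main obstacle is this reconciliation of recursions: the $\Lambda_\cF$-side uses fixed-point subtraction on $\cF\times\cF$-stable $(S\times S)$-sets, while the $\cL$-side is governed by the torsor structure of $\pi$ over $Z(P)$. Centricity makes the two arithmetics align, since $Z(P) = C_S(P)$ controls both the fiber size in $\cL$ and the fixed-point contributions being subtracted. The crux is to show that, at each step of the induction, the set of $\cL$-morphisms of source $P$ that \emph{are} restrictions from a strictly larger subgroup accounts for exactly the terms subtracted in the parameterization, leaving the nonextendable isos to realize the minimal multiplicity $c_{(P,\varphi)}^{\min}$ on the centric classes.
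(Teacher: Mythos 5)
Your first step is right in outline but wrong in justification, and your last step is the actual content of the theorem and is left unproven. For the stabilizer computation: centricity together with the $Z(P)$-torsor structure of the fibers of $\pi$ does \emph{not} bound the stabilizer of $\psi\in\fI$ above by $(P,\pi(\psi))$. What does is nonextendability: if $(y,x)$ stabilizes $\psi$ then $x\in\bigl(\psi^{-1}\cdot N_S(\psi P)\cdot\psi\bigr)\cap N_S(P)$, and Corollary \ref{conditions for nonextendability in L} identifies this intersection with $P$ precisely because $\psi$ is nonextendable. (For an extendable isomorphism, e.g.\ the restriction of $\id_S$ to a proper centric $P$, the stabilizer is strictly larger than the twisted diagonal on $P$, so your claimed mechanism cannot be the reason.)

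The more serious problem is the final step. You propose to match the orbit multiplicities of $\fI$ against the inductive fixed-point subtraction defining $\Lambda_\cF$, and you explicitly flag the reconciliation of the two recursions as ``the crux'' --- but you do not carry it out, and that reconciliation is essentially the whole theorem. Moreover this route is unnecessary: the parameterization of Theorem \ref{parameterization of semicharacteristic bisets} already says that any $\cF$-centric semicharacteristic biset is a $\ZZ_{\geq0}$-combination of the truncated basis elements $\Omega_P$, and by properties \ref{itm:cP1} and \ref{itm:nonzerocoeff} of Theorem \ref{F-basis} the coefficient of $\Omega_P$ is just the number of orbits isomorphic to $[P,\iota_P^S]$ for $P$ fully normalized. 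So once you know $\fI$ is $\cF$-generated with centric sources and $\cF$-stable on centrics (the latter via the count $\abs{\fI^{(P,\varphi)}}=\abs{Z(P)}$, using that morphisms of $\cL$ with source $P$ over $\varphi$ form a $Z(P)$-torsor and each has a unique nonextendable extension), minimality reduces to two short observations: a stabilizer $(P,\iota_P^S)$ forces $c_\fg=\id_P$, which is nonextendable only when $P=S$; and $\abs{\fI^{(S,\id)}}=\abs{Z(S)}=\abs{[S,\id]^{(S,\id)}}$ shows there is exactly one orbit of type $[S,\id]$. No class-by-class multiplicity computation is needed.
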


It should be noted that this biset $\fI$ is just the elements of the partial group $\cL$.

We interpret this result as saying that $\Lambda_\cF$ contains both more and less data than the linking system $\cL$:  Less in that only the left and right multiplications by $S$ are defined (so that $\Lambda_\cF$ does not even have a partial group structure), but more in that the minimal $\cF$-characteristic biset sees \emph{all} the subgroups of $S$ and not just the $\cF$-centric ones.  This suggests the possibility of using minimal characteristic bisets to avoid some of the nonfunctoriality of linking systems in future work.

Theorem \ref{thm:IntroLinkingSystemAsCharacteristicBisets} is a uniqueness statement about centic linking systems associated to $\cF$.  In Section \ref{sec:LinkingSystemFreeMinimalBisets} we establish a corresponding existence statement:  Without reference to a centric linking system for $\cF$, we set out to identify the $\cF$-centric part of $\Lambda_\cF$.

It turns out that the answer has a pleasingly simple form.  If $\varphi_i\colon P_i\xrightarrow{\cong} Q_i$, $i=1,2$, are $\cF$-isomorphisms, we say that $\varphi_1$ is equivalent to $\varphi_2$ if there exist $a,b\in S$ such that $\varphi_2=c_b\circ\varphi_2\circ c_a$.

\begin{theorem}[Theorem \ref{minimal centric biset structure}]\label{thm:IntroMinimalCentricBisetStructure}
The $\cF$-centric part of $\Lambda_\cF$ has one $(S,S)$-orbit for each equivalence class of nonexentable isomorphisms of $\cF$.  The orbit corresponding to the class of $\varphi\colon P\to Q$ is $S\times_{(P,\varphi)}S$.
\end{theorem}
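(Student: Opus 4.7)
The plan is to combine the parameterization of $\Lambda_\cF$ from Theorem \ref{parameterization of semicharacteristic bisets} with a fixed-point absorption argument tailored to $\cF$-centric subgroups, and to read off the orbit structure on the $\cF$-centric part.

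First, I would decompose the $\cF$-centric part of $\Lambda_\cF$ into $(S,S)$-orbits. By Axioms (0) and (1) for a characteristic biset, every such orbit has the form $S \times_{(P,\varphi)} S$ for some $\cF$-centric $P \leq S$ and some $\cF$-morphism $\varphi\colon P \hookrightarrow S$; since $P$ is $\cF$-centric, $\varphi$ is in particular an $\cF$-isomorphism onto $\varphi(P)$, which is also $\cF$-centric. A direct computation identifies $(S\times S)$-conjugacy of the twisted diagonal subgroups $(P,\varphi)$ with the equivalence relation on $\cF$-isomorphisms stated in the theorem, namely $\varphi_2 = c_b \circ \varphi_1 \circ c_a$ for some $a,b \in S$. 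Consequently the $(S,S)$-orbits in the $\cF$-centric part are classified (with multiplicity) by equivalence classes of $\cF$-isomorphisms between $\cF$-centric subgroups.

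The heart of the proof is showing that for $\cF$-centric $P$, the multiplicity of $S \times_{(P,\varphi)} S$ in $\Lambda_\cF$ equals $1$ if $\varphi$ is nonextendable and $0$ otherwise. The argument is one of absorption by minimality: a fixed point of the twisted diagonal $(P,\varphi)$ in a larger orbit $S \times_{(\tilde P, \tilde\varphi)} S$ exists precisely when $\tilde\varphi$ restricts to (a $(S\times S)$-conjugate of) $\varphi$. Hence if $\varphi$ properly extends to some $\tilde\varphi\colon \tilde P \to S$, the $\cF\times\cF$-stability condition at $(P,\varphi)$ is already satisfied by the fixed-point contribution of $S \times_{(\tilde P,\tilde\varphi)} S$, and the minimality of $\Lambda_\cF$ (Corollary \ref{minimal characteristic biset}) forces $S \times_{(P,\varphi)} S$ itself to appear with multiplicity zero. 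Conversely, if $\varphi$ is nonextendable, there is no larger $\cF$-centric orbit to absorb the required fixed points at $(P,\varphi)$, so the parameterization of Theorem \ref{parameterization of semicharacteristic bisets} together with the $\cF\times\cF$-basis of Corollary \ref{F-basis cor} forces multiplicity exactly one.

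The main obstacle is the careful bookkeeping in the induction, which is indexed downward through $\cF$-conjugacy classes of $\cF$-centric subgroups (by order). At each stage one must verify that the fixed-point count demanded by $\cF\times\cF$-stability at a pair $(P,\varphi)$, after subtracting the contributions from already-processed larger orbits, precisely matches the coefficient computed via the normalizer formula of Corollary \ref{normalizer size in SxS} applied to $N_{S\times S}(P,\varphi)$. Showing this difference is zero in the extendable case and equals $|N_{S\times S}(P,\varphi)|$ in the nonextendable case is where the $\cF$-centric hypothesis is essential, since it ensures that all relevant extensions land among the $\cF$-centric strata and that no uncontrolled cross-terms from non-centric orbits intervene. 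Once this verification is in place, the $\cF$-centric part of $\Lambda_\cF$ is forced to be exactly the disjoint union claimed.
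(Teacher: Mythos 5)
Your overall strategy---decompose the centric part into orbits $[P,\varphi]=S\times_{(P,\varphi)}S$, identify $(S\times S)$-conjugacy of twisted diagonals with the stated equivalence relation, and determine multiplicities by a downward induction through the stabilization procedure of Theorem \ref{F-basis}---is the right shape, and the first step is fine. But the heart of the argument, namely that the stability demand at $(P,\varphi)$ is ``already satisfied'' by larger orbits when $\varphi$ extends and forces multiplicity exactly one when it does not, is asserted rather than proved, and the ingredient that makes it true is missing. That ingredient is Puig's rigidity result for centric subgroups (Proposition \ref{every morphism in the centric orbit category is epi reason} and Corollary \ref{extensions in centric fusion system are unique up to conjugation by central elements}): for $\cF$-centric $P$, every $\varphi\in\cF(P,S)$ has a \emph{unique} nonextendable extension up to precomposition with $c_z$, $z\in Z(P)$. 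Without it you cannot rule out that two inequivalent nonextendable orbits both contribute fixed points at $(P,\varphi)$ (which would wreck the count), nor can you evaluate the contribution of a single orbit. The actual evaluation goes through Proposition \ref{omnibus transport extender}(c), $\abs[\big]{[Q,\psi]^{(P,\varphi)}}=\abs{N_{\varphi,\psi}}\cdot\abs{C_S(\varphi P)}/\abs{Q}$, and one must prove $N_{\varphi,\psi}=Q$ (using the uniqueness of extensions again, together with $N_\psi=Q$ for nonextendable $\psi$ with $\cF$-centric target, via the extension axiom). Only then does every orbit contribute exactly $\abs{Z(P)}$ fixed points at every twisted diagonal with source $P$, which is what makes the deficit $0$ in the extendable case and $\abs{Z(P)}=\abs{N_{S\times S}((P,\varphi))}/\abs{P}$ in the nonextendable case. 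Note your claimed value $\abs{N_{S\times S}(P,\varphi)}$ for that deficit is off by a factor of $\abs{P}$, since one copy of $[P,\varphi]$ carries $\abs{N_{S\times S}((P,\varphi))}/\abs{P}$ fixed points of $(P,\varphi)$, and Corollary \ref{normalizer size in SxS} is not the relevant tool here: it computes the self-normalizer, whereas you need the transporter $N_{S\times S}((P,\varphi),(Q,\psi))$ between distinct twisted diagonals.

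Relatedly, the role of $\cF$-centricity is not merely that ``all relevant extensions land among the centric strata'': it is that Puig's rigidity statement fails for non-centric subgroups, which is precisely why the non-centric part of $\Lambda_\cF$ admits no such clean description. For comparison, the paper organizes the argument in the opposite direction: it writes down the candidate $\Omega=\coprod_{\psi\in\sI}[Q,\psi]$ and verifies directly that it is an $\cF$-centric characteristic biset containing exactly one orbit of type $[S,\id_S]$ and no other orbit of type $[P,\iota_P^S]$, whence minimality follows from Theorem \ref{parameterization of semicharacteristic bisets}; the verification is exactly the fixed-point computation sketched above, so supplying it would complete your induction as well.
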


In \cite{GelvinReehYalcinBasisOfBurnsideRing}, a general framework for computing the orbits of $\Lambda_\cF$ is developed as a special case of a much more general combinatorial argument.  The advantage of the current Theorem \ref{thm:IntroMinimalCentricBisetStructure} lies in the relative simplicity of the solution, along with the comparatively straightforward method used in the proof.

In Section \ref{sec:KNormalizers}, we close by considering the local group-theoretic properties of $\Lambda_\cF$.  Returning to the connection between point-stabilizers and conjugation from Axiom (0) of $\cF$-characteristic bisets, we define notions of centralizer and normalizer subbisets.  Given a subgroup $P\in S$, the \emph{$\Lambda_\cF$-centralizer of $P$} is the set of points $C_{\Lambda_\cF}(P)\subseteq\Lambda_\cF$ satisfying $a\cdot\omega=\omega\cdot a$ for all $a\in P$; a similar definition made for the normalizer $N_{\Lambda_\cF}(P)$.  For $P\leq S$ we also have notions of centralizer and normalizer fusion subsystems, denoted $C_\cF(P)$ and $N_\cF(P)$, which are saturated fusion systems  if $P$ is  fully $\cF$-normalized (i.e., $\abs{N_S(P)}\geq \abs{N_S(\varphi P)}$ for all $\cF$-morphisms $\varphi\colon P\to S$).  We show

\begin{theorem}[Theorem \ref{K-normalizers of minimal are minimal}]\label{thm:IntroKNormalizers}
If $P$ is fully $\cF$-normalized and additionally $C_S(P)\leq P$, then $
C_{\Lambda_\cF}(P)=\Lambda_{C_\cF(P)}$  and  $N_{\Lambda_\cF}(P)=\Lambda_{N_\cF(P)}$.  In other words, the centralizer of $P$ in the minimal $\cF$-characteristic biset is the minimal $C_{\cF}(P)$-characteristic biset, and similarly for normalizers.
\end{theorem}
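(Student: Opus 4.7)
The plan is to prove the centralizer identity $C_{\Lambda_\cF}(P) = \Lambda_{C_\cF(P)}$, with the normalizer case treated by an analogous argument in which the condition $\varphi|_P = \id_P$ is replaced throughout by $\varphi(P) = P$. Under the hypothesis that $P$ is fully $\cF$-normalized, $C_\cF(P)$ is a saturated fusion system on $C_S(P)$, so $\Lambda_{C_\cF(P)}$ is well-defined by Theorem \ref{thmIntro:minimal characteristic biset}. I would establish the equality via two inclusions: first that $C_{\Lambda_\cF}(P)$ is itself a $C_\cF(P)$-characteristic biset (so $\Lambda_{C_\cF(P)} \subseteq C_{\Lambda_\cF}(P)$ by minimality), and then that $C_{\Lambda_\cF}(P)$ is itself minimal among such bisets.

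For the first inclusion, I would observe that $C_{\Lambda_\cF}(P)$ is closed under left and right multiplication by $C_S(P)$, and verify Axioms (0)--(3) for $C_\cF(P)$-characteristicness. Axioms (0) and (1) are direct: any $\omega \in C_{\Lambda_\cF}(P)$ has $\Lambda_\cF$-stabilizer of the form $(Q,\varphi)$ with $P\le Q$ and $\varphi|_P = \id_P$, so its $(C_S(P),C_S(P))$-stabilizer is the twisted diagonal of the restriction $\varphi|_{Q \cap C_S(P)}$, which is by definition a $C_\cF(P)$-morphism. Axiom (2), the $C_\cF(P)$-fixed-point condition, is deduced from the corresponding $\cF$-condition on $\Lambda_\cF$, using that every $C_\cF(P)$-morphism extends by the identity on $P$ to an $\cF$-morphism. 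The Sylow Axiom (3) is the most delicate step: applying Corollary \ref{normalizer size in SxS}, I would rewrite $|C_{\Lambda_\cF}(P)|/|C_S(P)|$ as a weighted count of $\Delta P$-fixed points across the orbits of $\Lambda_\cF$, and deduce it is prime to $p$ from the Sylow condition for $\Lambda_\cF$ itself, with the hypothesis $C_S(P)\le P$ used to simplify the relevant normalizer indices.

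For the reverse inclusion, I argue by contradiction: suppose $\Xi \subsetneq C_{\Lambda_\cF}(P)$ is a proper $C_\cF(P)$-characteristic subbiset. I would construct an $\cF$-characteristic biset $\widetilde\Omega \subsetneq \Lambda_\cF$ by retaining the orbits of $\Lambda_\cF$ lying outside $C_{\Lambda_\cF}(P)$ intact and replacing each centralizer orbit $S\times_{(Q,\varphi)} S$ with the $(S,S)$-induction, along $C_S(P) \hookrightarrow S$ and under the extension-by-identity lift of $\varphi$, of the corresponding orbit of $\Xi$. Verifying that $\widetilde\Omega$ is $\cF$-characteristic reduces to a compatibility check for this induction applied orbit-by-orbit, using the parameterization of Theorem \ref{parameterization of semicharacteristic bisets} to control $\cF$-stability. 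This contradicts the minimality of $\Lambda_\cF$, so no such $\Xi$ can exist, and hence $C_{\Lambda_\cF}(P) = \Lambda_{C_\cF(P)}$.

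The main obstacle is the verification in the reverse-inclusion step that the replacement $\widetilde\Omega$ remains $\cF$-characteristic, in particular that the Sylow count and $\cF$-stability survive the bookkeeping of how orbits of $\Lambda_\cF$ whose stabilizer contains $\Delta P$ decompose under the chain $C_S(P) \le Q \le S$. This bookkeeping is made tractable by the hypothesis $C_S(P) \le P$, which forces $P$ to be $\cF$-centric and pins down the normalizer indices controlling each inductive step. The normalizer statement then follows from the same two-inclusion framework with the stabilizer condition suitably adjusted, using that $N_\cF(P)$ is saturated under the full-normalizer hypothesis.
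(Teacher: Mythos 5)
Your first inclusion is essentially the paper's route (Proposition \ref{K-normalizers of semicharacteristics bisets are semicharacteristic} plus Proposition \ref{good K-normalizer subbisets of minimal characteristic bisets contain exactly one minimal characteristic biset}), although the stability check is subtler than ``extend by the identity on $P$'': a $C_\cF(P)$-morphism $\varphi\colon A\to B$ may have several extensions $\widetilde\varphi_i$ to $PA$, the fixed-point set $(C_{\Lambda_\cF}(P))^{(A,\varphi)}$ decomposes as $\coprod_i\Lambda_\cF^{(PA,\widetilde\varphi_i)}$, and you need Lemma \ref{the number of extensions in normalizer subsystems depend only on the source} to know the number of extensions depends only on the source. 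That is fixable. The genuine gap is the reverse inclusion. First, $C_{\Lambda_\cF}(P)$ is not a union of $(S,S)$-orbits of $\Lambda_\cF$: by Remark \ref{effect of translation on twisted graph stabilizers}, translating $\omega$ by $(a,b)$ moves the stabilizer from $(Q,\psi)$ to $(Q^b,c_a\circ\psi\circ c_b)$, so a single $(S,S)$-orbit typically meets $C_{\Lambda_\cF}(P)$ in a proper nonempty subset, and your partition into ``orbits outside'' and ``centralizer orbits'' is not well defined. Second, even granting some surgery, the $(S,S)$-induction of an orbit of $\Xi$ along $C_S(P)\hookrightarrow S$ has no reason to be a subbiset of the orbit it replaces, nor to be smaller, nor to preserve $\cF$-stability; and more fundamentally, the minimality of $\Lambda_\cF$ simply does not obstruct $C_{\Lambda_\cF}(P)$ from properly containing $\Lambda_{C_\cF(P)}$ --- indeed, without the hypothesis $C_S(P)\leq P$ the paper only proves containment of exactly one copy of $\Lambda_{C_\cF(P)}$, not equality, so no argument that ignores centricity at this step can close the gap.

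What actually closes it in the paper is the free-monoid parameterization applied to $\cN=C_\cF(P)$: since $C_{\Lambda_\cF}(P)$ is $\cN$-semicharacteristic, it is a nonnegative sum of the basis elements $\Omega_B^{\cN}$, each of which contains an orbit of type $[B,\iota_B^N]$. One then shows that the only point-stabilizers of $C_{\Lambda_\cF}(P)$ of the form $(A,\iota_A^N)$ have $A=N=C_S(P)$: such an $\omega$ has $(S,S)$-stabilizer $(Q,\psi)$ with $\psi$ restricting to the identity on the $\cF$-centric subgroup $P\cdot C_S(P)$, so Puig's uniqueness result (Proposition \ref{every morphism in the centric orbit category is epi reason}) gives $\psi=c_z|_Q$ with $z$ central, whence $[Q,\psi]\cong[Q,\iota_Q^S]$, and Theorem \ref{minimal centric biset structure} forces $Q=S$. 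Hence the only basis elements occurring are copies of $\Lambda_\cN$, and there is exactly one by the unique-orbit count. Your sketch never invokes the extension-uniqueness on centrics or the explicit orbit structure of the centric part of $\Lambda_\cF$, and these are precisely the ingredients that make the equality (as opposed to mere containment) true.
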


In fact, we prove a more general statement in terms of Puig's notion of $K$-normalizers.

We interpret these results as saying that the minimal $\cF$-characteristic biset is playing the role of a grouplike object inducing $\cF$ by conjugation, and that we are able to perform many group-theoretic operations in terms of $\Lambda_\cF$.

We close with an open conjecture that the condition $C_S(P)\leq P$ is not necessary in Theorem \ref{thm:IntroKNormalizers}.  In other words, we conjecture that $\cF$-centricity is not an essential concept in the world of minimal characteristic bisets, which would allow us avoid one of the most troublesome technical details in the study of $p$-local finite groups.\\

{\bf Acknowledgements:}  We would like to thank Haynes Miller for his hospitality in hosting the second author at MIT during the Spring of 2013.  It was during this visit that much of the work for this paper was done.

}

\section{Fusion systems}\label{secFusionSystems}

The next few pages contain a very short introduction to fusion systems, which were originally introduced by Puig under the name ``full Frobenius categories,'' cf. \cite{PuigFrobeniusCategories}. The aim is to introduce the terminology from the theory of fusion systems that will be used in the paper, and to establish the relevant notation. For a proper introduction to fusion systems see, for instance, Part I of ``Fusion Systems in Algebra and Topology'' by Aschbacher, Kessar and Oliver, \cite{AKO}.

\begin{definition}
A \emph{fusion system} $\cF$ on a $p$-group $S$, is a category where the objects are the subgroups of $S$, and for all $P,Q\leq S$ the morphisms must satisfy:
\begin{enumerate}
\item Every morphism $\ph\in \Mor_{\cF}(P,Q)$ is an injective group homomorphism, and the composition of morphisms in $\cF$ is just composition of group homomorphisms.
\item $\Hom_{S}(P,Q)\subseteq \Mor_{\cF}(P,Q)$, where
    \[\Hom_S(P,Q) = \{c_s \mid s\in N_S(P,Q)\}\]
    is the set of group homomorphisms $P\to Q$ induced by $S$-conjugation.
\item For every morphism $\ph\in \Mor_{\cF}(P,Q)$, the group isomorphisms $\ph\colon P\to \ph P$ and $\ph^{-1}\colon \ph P \to P$ are elements of $\Mor_{\cF}(P,\ph P)$ and $\Mor_{\cF}(\ph P, P)$ respectively.
\end{enumerate}
We also write $\Hom_{\cF}(P,Q)$ or just $\cF(P,Q)$ for the morphism set $\Mor_{\cF}(P,Q)$; and the group $\cF(P,P)$ of automorphisms is denoted by $\Aut_{\cF}(P)$.
\end{definition}

The canonical example of a fusion system comes from a finite group $G$ with a given $p$-subgroup $S$.
The fusion system of $G$ on $S$, denoted $\cF_S(G)$, is the fusion system on $S$ where the morphisms from $P\leq S$ to $Q\leq S$ are the homomorphisms induced by $G$-conjugation:
\[\Hom_{\cF_S(G)}(P,Q) := \Hom_G(P,Q) = \{c_g \mid g\in N_G(P,Q)\},\]
A particular case is the fusion system $\cF_S(S)$ consisting only of the homomorphisms induced by $S$-conjugation.

Let $\cF$ be an abstract fusion system on $S$. We say that two subgroup $P,Q\leq S$ are\linebreak \emph{$\cF$-con\-ju\-gate}, written $P\sim_{\cF} Q$, if they a isomorphic in $\cF$, i.e., there exists a group isomorphism $\ph\in \cF(P,Q)$.
$\cF$-conjugation is an equivalence relation, and the set of $\cF$-conjugates to $P$ is denoted by $(P)_{\cF}$. The set of all $\cF$-conjugacy classes of subgroups in $S$ is denoted by $Cl(\cF)$.
Similarly, we write $P\sim_S Q$ if $P$ and $Q$ are $S$-conjugate, the $S$-conjugacy class of $P$ is written $(P)_{S}$ or just $[P]$, and we write $Cl(S)$ for the set of $S$-conjugacy classes of subgroups in $S$.
Since all $S$-conjugation maps are in $\cF$, any $\cF$-conjugacy class $(P)_{\cF}$ can be partitioned into disjoint $S$-conjugacy classes of subgroups $Q\in (P)_{\cF}$.

We say that $Q$ is \emph{$\cF$-} or \emph{$S$-subconjugate} to $P$ if $Q$ is respectively $\cF$- or $S$-conjugate to a subgroup of $P$. 
In the case where $\cF = \cF_S(G)$, then $Q$ is $\cF$-subconjugate to $P$ if and only if $Q$ is $G$-conjugate to a subgroup of $P$; in this case the $\cF$-conjugates of $P$ are just those $G$-conjugates of $P$ that are contained in $S$.

A subgroup $P\leq S$ is said to be \emph{fully $\cF$-normalized} if $\abs{N_S P} \geq \abs{N_S Q}$ for all $Q\in (P)_{\cF}$; similarly $P$ is \emph{fully $\cF$-centralized} if $\abs{C_S P} \geq \abs{C_S Q}$ for all $Q\in (P)_{\cF}$.

\begin{definition}
A fusion system $\cF$ on $S$ is said to be \emph{saturated} if the following properties are satisfied for all $P\leq S$:
\begin{enumerate}
\item If $P$ is fully $\cF$-normalized, then $P$ is fully $\cF$-centralized, and $\Aut_S(P)$ is a Sylow $p$-subgroup of $\Aut_{\cF}(P)$.
\item Every homomorphism $\ph\in \cF(P,S)$ with $\ph (P)$ fully $\cF$-centralized extends to a homomorphism $\ph\in \cF(N_\ph,S)$, where
    \[N_\ph:= \{x\in N_S(P) \mid \exists y\in S\colon \ph\circ c_x = c_y\circ \ph\}\]
    is the \emph{extender} of $\varphi$.
\end{enumerate}
\end{definition}
The saturation axioms are a way of emulating the Sylow theorems for finite groups; in particular, whenever $S$ is a Sylow $p$-subgroup of $G$, then the Sylow theorems imply that the induced fusion system $\cF_S(G)$ is saturated (see e.g. \cite[Theorem 2.3]{AKO}).

A particularly important consequence of the saturation axioms, which forms the basis for the key technical Lemma \ref{Sune's lemma}, is as follows:
\begin{lemma}\label{lemNormalizerMap}
Let $\cF$ be saturated. If $P\leq S$ is fully normalized, then for each $Q\in [P]_{\cF}$ there exists a homomorphism $\ph\in \cF(N_S Q, N_S P)$ with $\ph(Q) = P$.
\end{lemma}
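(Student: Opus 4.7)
The plan is to use the two saturation axioms in tandem: first axiom (i) to perform a Sylow adjustment that makes the extender as large as possible, then axiom (ii) to actually extend.

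Pick any $\cF$-isomorphism $\psi \colon Q \xrightarrow{\cong} P$, which exists since $Q \in [P]_\cF$. I would like to extend $\psi$ across $N_S Q$ via axiom (ii), but for this I need the extender $N_\psi$ to be all of $N_S Q$, which is equivalent to asking that conjugation by $\psi$ send $\Aut_S(Q)$ into $\Aut_S(P)$. This generally fails for $\psi$ itself, so the first step is to precompose with an automorphism of $P$ to arrange it.

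Specifically, consider the $p$-subgroup $\psi \Aut_S(Q) \psi^{-1}$ of $\Aut_\cF(P)$. Since $P$ is fully $\cF$-normalized, saturation axiom (i) tells us that $\Aut_S(P)$ is a Sylow $p$-subgroup of $\Aut_\cF(P)$. By the usual Sylow theorem inside the finite group $\Aut_\cF(P)$, there exists $\beta \in \Aut_\cF(P)$ with
\[ \beta \bigl( \psi \Aut_S(Q) \psi^{-1} \bigr) \beta^{-1} \subseteq \Aut_S(P). \]
Set $\psi' := \beta \circ \psi \in \cF(Q,P)$. For every $x \in N_S(Q)$ the conjugation $c_x \in \Aut_S(Q)$, and by construction $\psi' c_x (\psi')^{-1} = c_y$ for some $y \in N_S(P)$. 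Thus $\psi' \circ c_x = c_y \circ \psi'$, which shows $x \in N_{\psi'}$ and so $N_{\psi'} = N_S(Q)$.

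Now axiom (i) also guarantees that $P$ is fully $\cF$-centralized, so axiom (ii) applies to $\psi'$ and produces an extension $\varphi \in \cF(N_S Q, S)$ of $\psi'$. It remains to check that $\varphi$ lands inside $N_S(P)$: for any $x \in N_S Q$ and $q \in Q$,
\[ \varphi(x)\, \psi'(q)\, \varphi(x)^{-1} = \varphi(xqx^{-1}) = \psi'(xqx^{-1}) \in \psi'(Q) = P, \]
so $\varphi(x) \in N_S(P)$. This $\varphi$ is the desired map. The only genuinely non-routine step is the Sylow adjustment by $\beta$; once that is in place, axiom (ii) does all the remaining work.
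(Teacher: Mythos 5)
Your argument is correct and is exactly the standard proof of this fact (it is the argument behind the references the paper cites for this lemma, e.g.\ Lemma 2.6(c) of \cite{AKO}): use axiom (i) and Sylow's theorem in $\Aut_\cF(P)$ to adjust $\psi$ so that its extender becomes all of $N_S(Q)$, then invoke axiom (ii), noting that $P$ fully normalized implies fully centralized. All steps check out, including the final verification that the extension lands in $N_S(P)$.
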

For the proof, see Lemma 4.5 of \cite{RobertsShpectorovSaturationAxiom} or Lemma 2.6(c) of \cite{AKO}.

\section{Background on bisets}\label{secBisets}
In this section we recall the basic results about bisets -- finite sets equipped with both a left and a right group action. In addition, we establish the necessary notation relating to bisets.

\begin{definition} Let $G$ and $H$ be finite groups.
A (free) $(G,H)$-biset $\Omega$ is a set endowed with a free left $H$-action and a free right $G$-action, which commute:
\[
h\cdot(\omega\cdot g)=(h\cdot \omega)\cdot g
\]
When it is not clear from context which groups act on $\Omega$, we write $_H \Omega_G$.

Equivalently, $\Omega$ is a left $(H\times G)$-set such that the restrictions of the action to $H\times 1$ and $1\times G$ are free.  This equivalence is formed by setting
\[
(h,g)\cdot \omega=h\cdot\omega\cdot g^{-1}.
\]
Given a $(G,H)$-biset $\Omega$ the \emph{opposite biset} is the $(H,G)$-biset $\Omega^\mathrm{o}$ with the same underlying set and with action defined by
\[
g\cdot \omega^\mathrm{o}\cdot h:=h^{-1}\cdot\omega\cdot g^{-1}.
\]
If $G=H$ and $\Omega\cong\Omega^\mathrm{o}$ as $(G,G)$-bisets, we say $\Omega$ is \emph{symmetric}.

Denote by $A_+(G,H)$ the monoid of isomorphism classes of $(G,H)$-bisets with disjoint union as addition.
If $\Omega\in A_+(G,H)$ and $\Lambda\in A_+(H,K)$, we define the $(G,K)$-biset $\Lambda\circ\Omega$ to be $\Lambda\times_H\Omega$. With $\circ$ as composition, the monoids $A_+(G,H)$ form the morphism sets of a category where the objects are all finite groups. This is also the reason why a $(G,H)$-biset has $G$ acting from the right and not the left, so that the composition order of bisets $\Lambda\circ\Omega$ fits with the general convention for maps and morphisms.

The \emph{point-stabilizer} of an element $\omega$ in a $(G,H)$-biset $\Omega$ is  $\Stab_{H\times G}(\omega)\leq H\times G$, the subgroup consisting of all pairs $(h,g)$ such that $h\cdot \omega=\omega\cdot g$, or equivalently $h\cdot\omega\cdot g^{-1}=\omega$.
A \emph{(injective) $(G,H)$-pair} is a pair $(K,\varphi)$ with $K\leq G$ and $\varphi\colon K\to H$ an injective group map. If $(K,\varphi)$ is a $(G,H)$-pair, denote by $[K,\varphi]$ the $(G,H)$-biset $H\times_{(K,\varphi)}G:= H\x G/(h,kg)\sim(h\ph(k),g)$.
If we also denote by $(K,\varphi)$  the \emph{graph} of $\varphi\colon K\to H$:
\[
(K,\varphi):=\left\{(\varphi(k),k)\in H\times G\right\},
\]
then  $[K,\varphi]\cong (H\times G)/(K,\varphi)$ as $H\times G$-sets.

We will also refer to the graph $(K,\varphi)$ as a \emph{twisted diagonal (subgroup)}.  In the case that $G=H=S$ is a finite $p$-group, $K=P\leq S$, and $\varphi\in\cF(P,S)$ for a given fusion system $\cF$ on $S$, we will refer to $(P,\varphi)$ as an \emph{$\cF$-twisted diagonal (subgroup)}.

The $(G,H)$-pairs $(K,\varphi)$ and $(L,\psi)$ are \emph{$(G,H)$-conjugate} if there are elements $g\in N_G(K,L)$ and $h\in N_H(\varphi(K),\psi(L))$ such that $L=\lc gK$ and
\[
\xymatrix{
K\ar[r]^\varphi\ar[d]_{c_g}&	H\ar[d]^{c_h}\\
L\ar[r]_\psi& H
}
\]
commutes. This happens if and only if the twisted diagonals $(K,\ph)$ and $(L,\psi)$ are conjugate as subgroups of $H\x G$.
\end{definition}

\begin{fact}
The $(G,H)$-bisets $[K,\varphi]$ and $[L,\psi]$ are isomorphic if and only if $(K,\varphi)$ is (G,H)-conjugate to $(L,\psi)$.  Moreover, every transitive $(G,H)$-biset is isomorphic to $[K,\varphi]$ for some $(G,H)$-pair $(K,\varphi)$.  In other words, if $\Omega$ is a transitive $(G,H)$-biset, the stabilizer in $H\times G$ of any point $\omega\in\Omega$ is a subgroup of the form $(K,\varphi)$.
\end{fact}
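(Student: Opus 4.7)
The plan is to reduce the statement to the classical classification of transitive group actions by stabilizer conjugacy, and then to verify separately that the two biset freeness hypotheses are precisely what force point-stabilizers to be twisted diagonals.

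For the isomorphism classification, I would pass through the equivalence recalled in the preliminaries between $(G,H)$-bisets and left $(H\times G)$-sets. Under this equivalence $[K,\varphi]=H\times_{(K,\varphi)} G$ becomes the coset $(H\times G)$-set $(H\times G)/(K,\varphi)$. The orbit-stabilizer theorem tells us that $(H\times G)/U\cong(H\times G)/V$ as $(H\times G)$-sets if and only if $U$ and $V$ are conjugate in $H\times G$. The excerpt has already observed that $(K,\varphi)$ and $(L,\psi)$ are $(G,H)$-conjugate as pairs if and only if their graphs are conjugate in $H\times G$; combining the two observations yields the ``iff'' assertion immediately.

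For the surjectivity part, I would start with a transitive $(G,H)$-biset $\Omega$, pick any $\omega\in\Omega$, and set $M:=\Stab_{H\times G}(\omega)$, so that $\Omega\cong(H\times G)/M$. The remaining task is to show that the freeness of the left $H$- and right $G$-actions force $M$ to be of the form $(K,\varphi)$ for some $K\leq G$ and some injection $\varphi\colon K\to H$. Unpacking the equivalence $(h,g)\cdot\omega=h\cdot\omega\cdot g^{-1}$, freeness of the left $H$-action gives $M\cap(H\times\{1\})=\{(1,1)\}$, so the coordinate projection $\pi_G\colon M\to G$, $(h,g)\mapsto g$, has trivial kernel; freeness of the right $G$-action symmetrically forces $\pi_H\colon M\to H$ to be injective as well. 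Thus each element of $K:=\pi_G(M)\leq G$ lifts uniquely to an element of $M$, and defining $\varphi(k)$ to be the $H$-component of that lift produces an injective homomorphism $\varphi\colon K\to H$ whose graph is exactly $M$, giving $\Omega\cong[K,\varphi]$.

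I do not anticipate any serious obstacle: the entire argument is a bookkeeping translation between the biset and $(H\times G)$-set viewpoints. The only content-ful observation is that the two biset freeness conditions correspond precisely to the triviality of the two coordinate restrictions of $M$, after which everything follows from standard orbit-stabilizer. The one subtle compatibility worth mentioning is between $(G,H)$-conjugacy of pairs (defined via the commuting square of group homomorphisms) and ordinary conjugacy of the graph subgroups inside $H\times G$; but this equivalence is already flagged in the excerpt, so I would cite it rather than rederive it.
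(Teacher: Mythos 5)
Your argument is correct and complete. The paper states this as a \emph{Fact} without proof (it is the standard classification of transitive bifree bisets), and your write-up supplies exactly the expected details: the ``iff'' clause is orbit--stabilizer for $(H\times G)$-sets combined with the paper's observation that $(G,H)$-conjugacy of pairs coincides with conjugacy of graphs in $H\times G$, and the identification of an arbitrary point-stabilizer $M$ as a twisted diagonal follows correctly from noting that left (resp.\ right) freeness kills $M\cap(H\times 1)$ (resp.\ $M\cap(1\times G)$), making both coordinate projections injective on $M$. Nothing is missing.
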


Let $S$ be a finite $p$-group and $\cF$ a saturated fusion system on $S$.

\begin{definition}
An $(S,S)$-biset $\Omega$ is  \emph{$\cF$-generated} if  all point-stabilizers are $\cF$-twisted diagonal subgroups.

$\Omega$ is \emph{$\cF$-stable} if for every $(S,S)$-pair $(P,\ph)$ and $\cF$-isomorphisms $\eta_1\colon Q \xra{\cong} P$ and $\eta_2\colon \ph P \xra{\cong} R$, we have $\abs[\big]{\Omega^{(P,\varphi)}}=\abs[\big]{\Omega^{(Q,\eta_2\ph\eta_1)}}$.
\end{definition}

\begin{definition}\label{semicharacteristic definition}
An \emph{$\mathcal{F}$-semicharacteristic biset} is an $(S,S)$-biset $\Omega$ that satisfies:
\begin{enumerate}
\item\label{char:Fgen} $\Omega$ is $\cF$-generated.
\item\label{char:Fstable} $\Omega$ is $\cF$-stable. When $\Omega$ is $\cF$-generated, it suffices to check that for each $P\leq S$ and $\varphi\in\cF(P,S)$, we have $\abs[\big]{\Omega^{(P,\varphi)}}=\abs[\big]{\Omega^{(P,\iota_P^S)}}=\abs[\big]{\Omega^{(\varphi(P),\varphi^{-1})}}$, for $\iota_P^S\colon P\to S$ the natural inclusion map.
\end{enumerate}
$\Omega$ is an \emph{$\cF$-characteristic biset} if in addition
\begin{enumerate}\setcounter{enumi}{2}
\item\label{char:ndegen} $\abs{\Omega}/\abs{S}\not\equiv 0 \pmod p$.
\end{enumerate}
\end{definition}

\begin{example}\label{ex:G is characteristic}
Suppose that $S\in \Syl_p(G)$ is equipped with the associated saturated fusion system $\cF := \cF_S(G)$. With left and right multiplication $G$ is the $(S,S)$-biset $_S G_S$, which is always $\cF$-characteristic:

For each $g\in G$, $\Stab_{S\x S}(g)=(S\cap S^g,c_g)$, hence $_S G_S$ is $\cF$-generated. If $c_h\in \cF(P,S)$ us any morphism in $\cF$, the $\cF$-twisted diagonal $(P,c_h)$ is conjugate in $G\x G$ to $(P,\iota_P^S)$ and $(\lc hP, c_h^{-1})$, so $\abs[\big]{\Omega^{(P,c_h)}}=\abs[\big]{\Omega^{(P,\iota_P^S)}}=\abs[\big]{\Omega^{(\lc hP,c_h^{-1})}}$ and $\Omega$ is $\cF$-stable. Finally, $S\in \Syl_p(G)$ implies that $p\nmid \abs{{}_S G_S}/\abs S$.
\end{example}

\subsection{Some fixed point calculations}\label{fixed-point subsection}

In the rest of this section we aim to investigate fixed point sets of the form $[Q,\psi]^{(P,\varphi)}$ that arise in our $\cF$-characteristic bisets.  This will in turn depend on the structure of the transporters $N_{S\times S}((P,\varphi),(Q,\psi))$ via the formula
\[
\abs*{[Q,\psi]^{(P,\varphi)} }=\frac{\abs*{N_{S\times S}((P,\varphi),(Q,\psi))}}{\abs*{(Q,\psi)}}=\frac{\abs*{N_{S\times S}((P,\varphi),(Q,\psi))}}{\abs*{Q}}.
\]
To begin, suppose that $(y,x)\in N_{S\times S}((P,\varphi),(Q,\psi))$, so that for each $p\in P$, we have
\[
(y,x)(\ph(p),p)(y^{-1},x^{-1})=(\psi(q),q)
\]
for some $q\in Q$.  In particular, if $xpx^{-1}=q$, we have $y\ph(p)y^{-1}=\psi(q)=\psi(xpx^{-1})$, so
\[
\xymatrix{
A\ar[r]^\varphi\ar[d]_{c_x}&\varphi A\ar[d]^{c_y}\\
B\ar[r]_\psi&\psi B
}
\]
is a commuting diagram of group homomorphisms with $x\in N_S(A,B)$ and $y\in N_S(\ph A,\psi B)$.  In particular, $\psi\circ c_x\circ\varphi^{-1}= c_y\colon \varphi A\to\psi B$, so that $\psi\circ c_x\circ\varphi^{-1}\in\Hom_S(\varphi A,\psi B)$.

Conversely, consider an element $x\in N_S(A,B)$ with $\eta:=\psi\circ c_x\circ\varphi^{-1}\in\Hom_S(\varphi A,\psi B)$. Then for every element $y\in S$ such that $c_y\big|_{\varphi A}=\eta$, it is easy to see that we have a pair $(y,x)\in N_{S\times S}((P,\varphi),(Q,\psi))$, and that there are $\abs{C_S(\varphi A)}$ such $y$ if there are any.

\begin{definition}
For $A\stackrel\varphi\to\varphi A$ and $B\stackrel\psi\to\psi B$ two morphisms of $\cF$, set
\[
N_{\varphi,\psi}:=\left\{x\in N_S(A,B)\Mid \psi\circ c_x\circ\varphi^{-1}\in\Hom_S(\varphi A,\psi B)\right\}.
\]
Note that the set $N_{\varphi,\psi}$ is independent of the choice of the targets of $\varphi$ and $\psi$, as is $\abs*{[Q,\psi]^{(P,\varphi)}}$.  Since every morphism of $\cF$ factors uniquely as an isomorphism followed by an inclusion, we lose no data by focusing on just the isomorphisms of $\cF$.
\end{definition}

\begin{prop}\label{omnibus transport extender}
Let $P\stackrel\varphi\to\varphi P$ and $Q\stackrel\psi\to\psi Q$ be two isomorphisms of $\cF$.
\begin{enumerate}\renewcommand{\theenumi}{(\alph{enumi})}\renewcommand{\labelenumi}{\theenumi}
\item $N_{\varphi,\psi}=\proj_2\bigl(N_{S\times S}((P,\varphi),(Q,\psi))\bigr)$  and $N_{\varphi^{-1},\psi^{-1}}=\proj_1\bigl(N_{S\times S}((P,\varphi),(Q,\psi))\bigr)$ for $\proj_i$  the \emph{i}th projection $S\x S\to S$, $i=1,2$.
\item  $\abs[\big]{N_{S\times S}((P,\varphi),(Q,\psi))}=\abs[\big]{N_{\varphi,\psi}}\cdot\abs[\big]{C_S(\varphi P)}=\abs[\big]{N_{\varphi^{-1},\psi^{-1}}}\cdot\abs[\big]{C_S(P)}$.
\item  $\abs[\big]{[Q,\psi]^{(P,\varphi)}}=\frac{\abs[\big]{N_{\varphi,\psi}}\cdot\abs[\big]{C_S(\varphi P)}}{\abs*{Q}}=\frac{\abs[\big]{N_{\varphi^{-1},\psi^{-1}}}\cdot\abs[\big]{C_S(P)}}{\abs*{Q}}$.
\item $N_{\varphi,\varphi}=N_\varphi$, the standard extender of $\varphi$.
\item $N_{\varphi,\psi}$ is naturally a free $(N_\varphi,N_\psi)$-biset.
\end{enumerate}
\end{prop}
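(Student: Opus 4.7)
My plan is to leverage the analysis carried out just before the definition of $N_{\varphi,\psi}$, which already characterizes the transporter: $(y,x)\in N_{S\times S}((P,\varphi),(Q,\psi))$ iff $x\in N_S(P,Q)$, $y\in N_S(\varphi P,\psi Q)$, and the square with horizontal arrows $\varphi,\psi$ and vertical arrows $c_x,c_y$ commutes, i.e., $\psi\circ c_x\circ\varphi^{-1}=c_y|_{\varphi P}$. All five parts then come from reading this single characterization from different angles.

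For part (a), the commuting condition, viewed as a constraint on $x$ with $y$ serving as witness, is exactly $x\in N_{\varphi,\psi}$; rewritten equivalently as $c_x|_P=\psi^{-1}\circ c_y\circ\varphi$, it is exactly $y\in N_{\varphi^{-1},\psi^{-1}}$ with $x$ as witness. Surjectivity of each projection is immediate since the definition of $N_{\varphi,\psi}$ explicitly asserts the existence of a witness. For part (b), I would examine the fibers of $\proj_2$: over a fixed $x\in N_{\varphi,\psi}$, the set of admissible $y$'s is the preimage of a single element under the restriction map $S\to\Hom(\varphi P,S)$, $y\mapsto c_y|_{\varphi P}$. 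By (a) this fiber is nonempty, and any two such $y$'s differ by an element of $C_S(\varphi P)$, giving every fiber size $\abs{C_S(\varphi P)}$. The dual count for $\proj_1$ gives $\abs{C_S(P)}$, and part (c) then follows from (b) by dividing through by $\abs{Q}$ using the orbit-stabilizer formula cited at the beginning of the subsection.

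Part (d) is essentially a tautology: setting $\psi=\varphi$, the condition $\varphi\circ c_x\circ\varphi^{-1}\in\Hom_S(\varphi P,\varphi P)$ is the assertion that some $y\in S$ satisfies $c_y\circ\varphi=\varphi\circ c_x$, which is the defining condition of the extender $N_\varphi$. For part (e), I would define the actions by left and right multiplication in $S$ and verify closure: if $x\in N_{\varphi,\psi}$ has witness $y$ and $a\in N_\varphi$ has witness $a'$, then the computation
\[
\psi\circ c_{xa}\circ\varphi^{-1}=(\psi\circ c_x\circ\varphi^{-1})\circ(\varphi\circ c_a\circ\varphi^{-1})=c_y|_{\varphi P}\circ c_{a'}|_{\varphi P}=c_{ya'}|_{\varphi P}
\]
shows that $xa$ lies in $N_{\varphi,\psi}$ with witness $ya'$. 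Left multiplication by $N_\psi$ is checked analogously, the two actions commute by associativity in $S$, and freeness on both sides is inherited from freeness of multiplication in $S$. There is no serious obstacle; the only place calling for care is part (a), where one must be willing to read the commuting square in both directions in order to see the symmetric roles of $\varphi$ and $\varphi^{-1}$.
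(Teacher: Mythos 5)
Your proposal is correct and follows essentially the same route as the paper: parts (a)--(d) are read off from the characterization of the transporter established just before the definition of $N_{\varphi,\psi}$ (including the fiber count $\abs{C_S(\varphi P)}$, which the paper records there), and part (e) is the same closure computation, which the paper does for both actions simultaneously via $m\cdot x\cdot n$ rather than one side at a time.
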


\begin{proof}
(a)-(d) are immediate from the preceding discussion.  For (e), pick $x\in N_{\varphi,\psi}$, $n\in N_\varphi$, and $m\in N_\psi$.  We have
\begin{eqnarray*}
\psi\circ c_{mxn}\circ\varphi^{-1}&=&(\psi\circ c_m\circ \psi^{-1})\circ(\psi\circ c_x\circ\varphi^{-1})\circ(\varphi\circ c_n\circ\varphi^{-1}) \\
&\in&\Aut_S(\psi B)\circ\Hom_S(\varphi A,\psi B)\circ\Aut_S(\varphi A)\\
&\subseteq&\Hom_S(\varphi A,\psi B),
\end{eqnarray*}
so $m\cdot x\cdot n=mxn\in N_{\varphi,\psi}$.  Freeness is immediate.
\end{proof}

\begin{cor}\label{normalizer size in SxS}
Every $\cF$-twisted diagonal subgroup $(P,\varphi)\in S\times S$  is $(\cF\times\cF)$-isomorphic to some $(Q,\iota_Q^S)$ that is fully $(\cF\times\cF)$-normalized.  Moreover, $(Q,\iota_Q^S)$ is fully $(\cF\times\cF)$-normalized if and only if $Q$ is fully $\cF$-normalized.
\end{cor}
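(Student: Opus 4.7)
The plan is to reduce arbitrary $\cF$-twisted diagonals to the inclusion case via explicit $(\cF\times\cF)$-isomorphisms, and then use the factorization of $(\cF\times\cF)$-normalizer orders provided by Proposition~\ref{omnibus transport extender}(b) to rephrase everything in terms of ordinary $\cF$-normalizers and centralizers in $S$.

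First I would establish the reduction to inclusions.  Given an $\cF$-isomorphism $\varphi\colon P\xrightarrow{\cong}\varphi P$, the pair $(\varphi^{-1},\id_P)\in\cF(\varphi P,P)\times\cF(P,P)$—a morphism in the product fusion system $\cF\times\cF$—carries the graph $(P,\varphi)=\{(\varphi(p),p)\}$ onto the straight diagonal $(P,\iota_P^S)=\{(p,p)\}$.  Then, for any fully $\cF$-normalized $Q\in(P)_\cF$ (which exists because every $\cF$-conjugacy class contains fully normalized members) witnessed by an $\cF$-isomorphism $\psi\colon P\xrightarrow{\cong}Q$, the pair $(\psi,\psi)$ conjugates $(P,\iota_P^S)$ to $(Q,\iota_Q^S)$.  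Composing these shows every $\cF$-twisted diagonal is $(\cF\times\cF)$-conjugate to some $(Q,\iota_Q^S)$ with $Q$ fully $\cF$-normalized, which reduces the first assertion of the corollary to the second.

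For the equivalence, the key observation is $N_{\iota_Q^S,\iota_Q^S}=N_{\iota_Q^S}=N_S(Q)$, so Proposition~\ref{omnibus transport extender}(b) specializes to $\abs{N_{S\times S}(Q,\iota_Q^S)}=\abs{N_S(Q)}\cdot\abs{C_S(Q)}$.  Any other $\cF$-twisted diagonal $(R,\rho)$ in the same $(\cF\times\cF)$-conjugacy class (so in particular $R\sim_\cF Q$) satisfies
\[
\abs[\big]{N_{S\times S}(R,\rho)}=\abs[\big]{N_\rho}\cdot\abs[\big]{C_S(\rho R)}\leq\abs[\big]{N_S(R)}\cdot\abs[\big]{C_S(\rho R)},
\]
again by Proposition~\ref{omnibus transport extender}(b) together with the trivial containment $N_\rho\leq N_S(R)$.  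If $Q$ is fully $\cF$-normalized, then by saturation axiom (i) it is also fully $\cF$-centralized, and the inequalities $\abs{N_S(R)}\leq\abs{N_S(Q)}$ and $\abs{C_S(\rho R)}\leq\abs{C_S(Q)}$ combine to force $\abs{N_{S\times S}(R,\rho)}\leq\abs{N_{S\times S}(Q,\iota_Q^S)}$, so $(Q,\iota_Q^S)$ is fully $(\cF\times\cF)$-normalized.  Conversely, if $Q$ were not fully $\cF$-normalized, comparison with a fully $\cF$-normalized conjugate $Q'$ and the same formula would yield the strict inequality $\abs{N_{S\times S}(Q',\iota_{Q'}^S)}>\abs{N_{S\times S}(Q,\iota_Q^S)}$, contradicting the assumed full normalizedness of $(Q,\iota_Q^S)$.

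The main subtlety I anticipate is the need to maximize the two factors $\abs{N_S(\cdot)}$ and $\abs{C_S(\cdot)}$ simultaneously at a single representative of $(P)_\cF$; this is exactly what saturation axiom (i) provides, and without it the two optima could occur at different subgroups so that the product formula need not be maximized at any single choice.  Once this observation is in hand, the rest is a direct bookkeeping computation built on Proposition~\ref{omnibus transport extender}.
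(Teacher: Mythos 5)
Your proposal is correct and follows essentially the same route as the paper: both reduce to the case of inclusions, apply Proposition~\ref{omnibus transport extender}(b) to write $\abs{N_{S\times S}((P,\varphi))}=\abs{N_\varphi}\cdot\abs{C_S(\varphi P)}$ with $\abs{N_\varphi}\leq\abs{N_S(P)}$ and $N_{\iota_Q^S}=N_S(Q)$, and then invoke the saturation axiom that full $\cF$-normalization implies full $\cF$-centralization so that both factors are maximized simultaneously at a fully normalized representative. Your write-up merely makes explicit the conjugating morphisms and the converse comparison that the paper leaves as "clear" and "precisely when."
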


\begin{proof}
That $(P,\varphi)$ is $(\cF\times\cF)$-conjugate with some $(Q,\iota_Q^S)$ is clear from the definition of $\cF\times\cF$.  Proposition \ref{omnibus transport extender} implies that $
\abs*{N_{S\times S}((P,\varphi))}=\abs*{N_\varphi}\cdot \abs*{C_S(\varphi P)}$.  It follows from the definition of the extender that $\abs{N_\varphi}\leq\abs{N_S(P)}$ and $N_{\iota_Q^S}=N_S(Q)$.  Therefore $\abs*{N_{S\times S}((Q,\iota_Q^S))}=\abs*{N_S(Q)}\cdot\abs*{C_S(Q)}$, and this is maximal in the $(\cF\times\cF)$-class of $(P,\varphi)$ precisely when $Q$ is fully $\cF$-normalized (as full $\cF$-normalization implies full $\cF$-centralization).
\end{proof}

%
%
%
%

Our first main goal is to parameterize the semicharacteristic bisets of $\cF$. This will however require a short detour into the realm of sets with only one group action.

\section{The free monoid of $\cF$-sets}\label{sec:monoid of F-sets}
Let $S$ be a finite $p$-group and $\cF$ a saturated fusion system on $S$. In analogy with the finite $G$-sets for a group $G$, this section studies a notion of $\cF$-sets for a fusion system. We give a new proof of \cite[Theorem A]{ReehStableSets}, that every finite $\cF$-set decomposes uniquely, up to $S$-isomorphism, as a disjoint union of irreducible $\cF$-sets. The key lemma is the same as in \cite{ReehStableSets}, but the main part of the proof is different: In the proof below, the decomposition is constructed explicitely by considering the actual $\cF$-sets in play, while \cite{ReehStableSets} relies on the structure of the Burnside ring of $\cF$ and linear algebra.

\begin{definition}
A finite \emph{$\cF$-stable} $S$-set, or just \emph{$\cF$-set}, is a finite set $X$ with an action of $S$ such that for all $P\leq S$ and $\varphi\in\cF(P,S)$ the order of the fixed point sets of $P$ and $\varphi P$ are equal:  $\abs{X^P}=\abs{X^{\varphi P}}$.

Let $A_+(S)$ be the free commutative monoid of isomorphism classes of finite $S$-sets with disjoint union as addition, and let $A_+(\cF)\subseteq A_+(S)$ be the submonoid of isomorphism classes of $\cF$-sets. Both $A_+(S)$ and $A_+(\cF)$ are semirings with Cartesian product as multiplication. Our goal in this section is to show that $A_+(\cF)$ is a free commutative monoid.
\end{definition}

\begin{definition}
The $S$-set $X$ is \emph{$\cF$-stable above level $n$} if for any $P\leq S$ with $\abs{P}\geq p^n$ and $\varphi\in\cF(P,S)$, we have $\abs{X^P}=\abs{X^{\varphi P}}$.
Clearly an $S$-set $X$ is an $\cF$-set if and only if $X$ is $\cF$-stable above level 0.
\end{definition}

The following is the main technical result that implies the freeness of $A_+(\cF)$.  We do not repeat the proof, but we do recall how it gives rise to an additive basis in the following.

\begin{lemma}[\cite{ReehStableSets}, Lemma 4.7]\label{Sune's lemma}
Suppose that $X$ is an $S$-set that is $\cF$-stable above level $n+1$ and that the order of every stabilizer of every element of $X$ is at least $p^{n+1}$.  If $P,Q\leq S$ are $\cF$-conjugate subgroups of order $p^n$ and $Q$ is fully normalized in $\cF$, then $\abs{X^Q}\geq\abs{X^P}$.
\end{lemma}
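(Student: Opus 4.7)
The plan is to exploit the normalizer map from Lemma \ref{lemNormalizerMap}, together with the hypothesis on stabilizer orders, to cover both $X^P$ and $X^Q$ by fixed sets of subgroups of order $p^{n+1}$, and then to match these two covers via inclusion-exclusion. Since $Q$ is fully $\cF$-normalized and $P\sim_\cF Q$, Lemma \ref{lemNormalizerMap} (applied with the roles of $P$ and $Q$ reversed) will supply an injective morphism $\varphi\in\cF(N_S P,N_S Q)$ with $\varphi(P)=Q$; this $\varphi$ is the sole bridge between the two covers.

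For the first step, I would observe that every $x\in X^P$ has $\Stab_S(x)\supsetneq P$, since $\Stab_S(x)\geq P$ and $\abs{\Stab_S(x)}\geq p^{n+1}>\abs{P}$. Standard $p$-group theory then gives $P\lneq N_{\Stab_S(x)}(P)$, from which I can select a subgroup $R\leq\Stab_S(x)\cap N_S(P)$ with $P\triangleleft R$ and $\abs{R}=p^{n+1}$. Setting $\mathcal{M}_P^*:=\{R\leq S \mid P\triangleleft R,\ \abs{R}=p^{n+1}\}$ and defining $\mathcal{M}_Q^*$ analogously, this produces the covers
\[
X^P=\bigcup_{R\in\mathcal{M}_P^*}X^R,\qquad X^Q=\bigcup_{R'\in\mathcal{M}_Q^*}X^{R'}.
\]
Each $R\in\mathcal{M}_P^*$ lies in $N_S P$, so $\varphi(R)\in\mathcal{M}_Q^*$, and consequently $\bigcup_{R\in\mathcal{M}_P^*}X^{\varphi(R)}\subseteq X^Q$.

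The second step is inclusion-exclusion. For any non-empty $T\subseteq\mathcal{M}_P^*$, the join $U_T:=\langle R\mid R\in T\rangle$ sits inside $N_S P$ and has order at least $p^{n+1}$, so the hypothesis of $\cF$-stability above level $n+1$ gives $\abs{X^{U_T}}=\abs{X^{\varphi(U_T)}}$. Because $\varphi$ is a homomorphism, $\varphi(U_T)=\langle\varphi(R)\mid R\in T\rangle$, and the $T$-indexed intersections of fixed sets are exactly $X^{U_T}$ and $X^{\varphi(U_T)}$ respectively. Thus the inclusion-exclusion expansions of $\abs*{\bigcup_{R\in\mathcal{M}_P^*}X^R}$ and $\abs*{\bigcup_{R\in\mathcal{M}_P^*}X^{\varphi(R)}}$ agree term by term, yielding
\[
\abs{X^P}=\abs[\Big]{\bigcup_{R\in\mathcal{M}_P^*}X^{\varphi(R)}}\leq\abs{X^Q}.
\]

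The delicate point will be to verify that every join $U_T$ appearing in the inclusion-exclusion has order at least $p^{n+1}$, so that $\cF$-stability above level $n+1$ genuinely applies, and that $\varphi$ commutes with taking joins inside $N_S P$. This is precisely why the hypothesis on stabilizer orders is set one level \emph{above} the level of $P$ and $Q$: it is just enough to force all the fixed sets in play to be indexed by subgroups in the stable range, while still allowing $P$ itself to lie below that range.
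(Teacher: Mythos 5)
Your proof is correct, and it is essentially the original argument of \cite{ReehStableSets}: the paper does not reprove this lemma but imports it, and its later reference to ``the same inclusion-exclusion argument in the proof of Lemma 4.7'' confirms that the cover of $X^P$ by fixed sets of order-$p^{n+1}$ overgroups, transported along the normalizer map $\varphi\in\cF(N_SP,N_SQ)$ and compared via inclusion-exclusion, is exactly the intended proof. All the delicate points you flag do check out: every join $U_T$ contains some $R$ of order $p^{n+1}$, and $\varphi$ commutes with joins because it is a single injective homomorphism defined on all of $N_SP$.
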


\begin{notation}
Denote by $Cl(S)$ the set of $S$-conjugacy classes of subgroups of $S$, and by $Cl(\cF)$ the set of $\cF$-conjugacy classes of subgroups.  A class in $Cl(S)$ will be denoted $(P)_S$, and a class in $Cl(\cF)$ will be $(P)_\cF$.  Also, for $(P)_S\in Cl(S)$, let $[P]$ denote the isomorphism class of the $S$-set $S/P$.
\end{notation}

We now construct a collection of $\cF$-sets satisfying particular structural properties. We will later show, in Corollary \ref{F-basis cor}, that such $\cF$-sets are irreducible and form a basis for $A_+(\cF)$.
\begin{theorem}\label{F-basis}
For each $P\leq S$ fully normalized in $\cF$, there is an $\cF$-set
\[
X_P=\coprod_{(Q)_S\in Cl(S)} c_Q\cdot[Q],
\]
for $c_Q\in\ZZ_{\geq0}$, that is uniquely determined as an $S$-set by requiring
\begin{enumerate}
\item\label{itm:cP1} $c_P=1$,
\item\label{itm:nonzerocoeff} If $Q$ is fully normalized and $c_Q\neq 0$, then $Q\cong_\cF P$.
\end{enumerate}
\end{theorem}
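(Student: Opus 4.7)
The plan is to build $X_P$ by downward induction on subgroup order, starting from the ``seed'' $[P] = S/P$ (with $\abs{P} = p^{n_0}$) and successively adjusting by transitive $S$-orbits of smaller stabilizer order to enforce $\cF$-stability one level at a time. Throughout, the partial sum $X^{[n]}$ constructed so far will be $\cF$-stable above level $n$ with all stabilizers of order $\geq p^n$, $c_P = 1$, and $c_R = 0$ for every fully-normalized $R \not\sim_\cF P$ of order $\geq p^n$ --- precisely the hypotheses needed for Lemma \ref{Sune's lemma} at the next step.

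For the inductive step at level $n < n_0$, fix a fully-normalized representative $Q_0$ in each $\cF$-conjugacy class of subgroups of order $p^n$. Lemma \ref{Sune's lemma} applied to $X^{[n+1]}$ yields $\abs{(X^{[n+1]})^{Q_0}} \geq \abs{(X^{[n+1]})^{Q_j}}$ for every $Q_j \sim_\cF Q_0$. Since $Q_0 \not\sim_\cF P$, condition (\ref{itm:nonzerocoeff}) forces $c_{Q_0} = 0$, and for each remaining $S$-class $[Q_j]_S$ inside $(Q_0)_\cF$ I set
\[ c_{Q_j} := \frac{\abs{(X^{[n+1]})^{Q_0}} - \abs{(X^{[n+1]})^{Q_j}}}{\abs{N_S(Q_j)}/\abs{Q_j}}. \]
Adding $c_{Q_j}$ copies of $[Q_j]$ equalizes the fixed-point counts across the $\cF$-class at level $n$ without disturbing higher levels, because $\abs{(S/Q_j)^R} = 0$ when $\abs{R} > p^n$ and equals $\abs{N_S(Q_j)}/\abs{Q_j}$ exactly when $R \sim_S Q_j$ at level $p^n$. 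The $\cF$-class $(P)_\cF$ itself lies outside Sune's lemma (stabilizers in the seed $[P]$ have order only $p^{n_0}$) and is handled by direct computation: $\cF$-stability of $X_P$ at level $n_0$ against the known value $\abs{X^P} = \abs{N_S(P)}/\abs{P}$ forces $c_{P'} = \abs{N_S(P)}/\abs{N_S(P')}$ for each $P' \sim_\cF P$, a positive $p$-power equal to $1$ precisely when $P'$ is fully normalized.

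For uniqueness, let $Y$ be any $\cF$-set satisfying (\ref{itm:cP1}), (\ref{itm:nonzerocoeff}). I first argue $c_R(Y) = 0$ whenever $\abs{R} > \abs{P}$: a maximal such $R$ with $c_R \neq 0$ would give a fully-normalized $R_0 \in (R)_\cF$ with $c_{R_0} = 0$ by (\ref{itm:nonzerocoeff}), so maximality kills all contributions to $\abs{Y^{R_0}}$ from strictly larger orbits, leaving $\abs{Y^{R_0}} = 0$; $\cF$-stability then forces $c_R = 0$, a contradiction. A parallel downward induction on $\abs{R}$ starting from level $n_0$ now pins down every remaining coefficient of $Y$ uniquely from (\ref{itm:cP1}), (\ref{itm:nonzerocoeff}), and $\cF$-stability, matching the construction above and giving $Y = X_P$.

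The main obstacle is integrality: while the non-negativity of each $c_{Q_j}$ is exactly Lemma \ref{Sune's lemma}, the divisibility of the numerator $\abs{(X^{[n+1]})^{Q_0}} - \abs{(X^{[n+1]})^{Q_j}}$ by the $p$-power $\abs{N_S(Q_j)}/\abs{Q_j}$ is a finer fact that is not visible in the inequality alone. Establishing it cleanly requires either a $W_S(Q_j)$-orbit analysis of the $N_S(Q_j)/Q_j$-set $X^{Q_j}$ or unpacking the detailed $p$-adic counting already present in the proof of Lemma 4.7 of \cite{ReehStableSets}.
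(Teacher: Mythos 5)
Your construction follows the same route as the paper's proof: seed with $[P]$, equalize fixed points within the class $(P)_\cF$ by hand (adding $\abs{N_S(P)}/\abs{N_S(P')}$ copies of each $[P']$), and then descend one level at a time, invoking Lemma \ref{Sune's lemma} for non-negativity of the correction coefficients. Your uniqueness argument is sound and in fact more self-contained than the paper's, which defers uniqueness to Corollary \ref{cor:minimal Fsets are unique} via the basis theorem of Corollary \ref{F-basis cor}. The genuine gap is exactly the one you flag and then set aside: the integrality of
\[
c_{Q_j}=\frac{\abs{(X^{[n+1]})^{Q_0}}-\abs{(X^{[n+1]})^{Q_j}}}{\abs{N_S(Q_j)}/\abs{Q_j}}.
\]
This divisibility is not a routine afterthought --- it is the entire content of the inductive step. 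Lemma \ref{Sune's lemma} only gives that the numerator is non-negative; without divisibility by $\abs{W_S(Q_j)}$ the construction produces non-negative rational multiples of orbits rather than an honest $S$-set, and the theorem is not proved.

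The paper closes this gap by the first of the two options you mention. With $Q_j$ of order $p^n$ and $Q_0$ a fully normalized representative of its $\cF$-class, saturation (Lemma \ref{lemNormalizerMap}) provides $\varphi\in\cF(N_S(Q_j),N_S(Q_0))$ with $\varphi(Q_j)=Q_0$, so $W:=W_S(Q_j)=N_S(Q_j)/Q_j$ acts on $(X^{[n+1]})^{Q_j}$ and, via $\varphi$, on $(X^{[n+1]})^{Q_0}$. Split each fixed-point set into its $W$-free and non-free parts. A point of the non-free part of $(X^{[n+1]})^{Q_j}$ is fixed by some $A$ with $Q_j\lneq A\leq N_S(Q_j)$, hence $\abs{A}\geq p^{n+1}$; since $X^{[n+1]}$ is $\cF$-stable above level $n+1$, we have $\abs{(X^{[n+1]})^{A}}=\abs{(X^{[n+1]})^{\varphi(A)}}$ for every such $A$, and an inclusion--exclusion over these overgroups shows the two non-free parts have equal cardinality. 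The difference $\abs{(X^{[n+1]})^{Q_0}}-\abs{(X^{[n+1]})^{Q_j}}$ is therefore a difference of cardinalities of free $W$-sets and so is divisible by $\abs{W}=\abs{N_S(Q_j)}/\abs{Q_j}$. You need to carry out this (or an equivalent) argument; as written, your proposal asserts the key step rather than proving it.
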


\begin{remark}\label{rmk:additional properties}
The particular sets that we construct in the proof have additional properties:
\begin{enumerate}\setcounter{enumi}{2}
\item\label{itm:subconjugateorbits} If $c_Q\neq 0$, $Q$ is $\cF$-subconjugate to $P$.
\item\label{itm:cPcQ1} If $P\cong_\cF Q$ are both fully normalized, then $X_P=X_Q$, which contains exactly one copy of each orbit $[P]$ and $[Q]$.
\end{enumerate}

In Corollary \ref{cor:minimal Fsets are unique}, we argue that $X_P$ in Theorem \ref{F-basis} is actually uniquely determined by properties \ref{itm:cP1} and \ref{itm:nonzerocoeff}. Therefore $X_P$ must have the  structure specified in the proof below and satisfies \ref{itm:subconjugateorbits} and \ref{itm:cPcQ1}.

Finally, we should note that while only \ref{itm:cP1}-\ref{itm:cPcQ1} will be used in this paper, much more can be said about the coefficients $c_Q$ and the $Q$-fixed-point orders of $X_P$.  The computations involved relate the combinatorics of the poset of subgroups of $S$ to the shape of the category $\cF$ (i.e., which subgroups are made conjugate in the fusion system) together with $p$-local data concerning the orders of normalizers of certain subgroups.  See \cite{GelvinReehYalcinBasisOfBurnsideRing} for more details.
\end{remark}

\begin{proof}
We will begin with the $S$-set $[P]$ and construct, in a minimal way, an $\cF$-set containing $[P]$. We proceed level by level using Lemma \ref{Sune's lemma} until we have a set which is $\cF$-stable above level $0$ and hence an $\cF$-set.

Suppose that $\abs{P}=p^n$.  If $Q\cong_\cF P$ but $Q\not\cong_S P$, $[P]$ will not be $\cF$-stable above level $n$:  $\abs{[P]^P}=\abs{N_S(P)}/\abs{P}$ but $\abs{[P]^Q}=0$.  To correct this while respecting \ref{itm:subconjugateorbits}, we must add some number of copies of $[Q]$.  Since $\abs{[Q]^Q}=\abs{N_S(Q)}/\abs{Q}$ and $\abs{Q}=\abs{P}$, it is easy to see that we must add $\frac{\abs{N_S(P)}}{\abs{N_S(Q)}}$ copies of $[Q]$ so that the number of $Q$-fixed points of the resulting $S$-set equals the number of $P$-fixed points.  It follows easily that, if $P=Q_1,Q_2,\dotsc,Q_a$ are representatives of the $S$-conjugacy classes of the $\cF$-conjugacy class $(P)_\cF$, the $S$-set
\[
X_P^{(n)}:=\coprod_{i=1}^a\frac{\abs{N_S(P)}}{\abs{N_S(Q)}}\cdot[Q]
\]
is an $S$-set, $\cF$-stable above level $n$, that satisfies \ref{itm:cP1}-\ref{itm:subconjugateorbits}. Note that had we used another fully normalized subgroup $Q\cong_\cF P$ instead of $P$, we would arrive at the same set: $X_Q^{(n)}=X_P^{(n)}$. Because the construction only depends on $X_P^{(n)}$,  $X_Q=X_P$ and \ref{itm:cPcQ1} follows.

The trick then is to show that  $X_P^{(n)}$ is contained in an $S$-set $X_P^{(n-1)}$ that satisfies \ref{itm:cP1}-\ref{itm:subconjugateorbits} and is $\cF$-stable above level $n-1$; the rest follows by obvious induction.  So, suppose that $Q\leq S$ is a subgroup of order $p^{n-1}$, and let $R\in (Q)_\cF$ be a fully normalized representative fro the $\cF$-conjugacy class.  Lemma \ref{Sune's lemma} implies that
\[
\abs*{(X_P^{(n)})^Q}\leq\abs*{(X_P^{(n)})^R}.
\]
The claim is that if the inequality is proper, we can add a certain number of copies of $[Q]$ to $X_P^{(n)}$ to force equality.  Let $\varphi\in\cF(N_S(Q),N_S(R))$ be such that $\varphi(Q)=R$; this exists by the saturation of $\cF$ and the assumption that $R$ is fully $\cF$-normalized.  $W_S(Q):=N_S(Q)/Q$ naturally acts on $(X_P^{(n)})^Q$.  Similarly $W_S(R)$ naturally acts on $(X_P^{(n)})^R$, and $\varphi$ induces a map $W_S(Q)\to W_S(R)$ and thus an action of $W_S(Q)$ on $(X_P^{(n)})^R$.

Decompose
\[(X_P^{(n)})^R=(X_P^{(n)})^R_{f}\amalg(X_P^{(n)})^R_{nf},\]
where $(X_P^{(n)})^R_f$ is the subset of elements on which $W_S(Q)$ acts freely and $(X_P^{(n)})^R_{nf}$ are those elements on which $W_S(Q)$ does not act freely.  In other words, $\omega\in(X_P^{(n)})^R_{nf}$ iff $\omega\in(X_P^{(n)})^R$ and $\Stab_{W_S(Q)}(\omega)\neq 1$.  Similarly, decompose
\[(X_P^{(n)})^Q=(X_P^{(n)})^Q_f\amalg(X_P^{(n)})^Q_{nf}.\]
If $\omega\in(X_P^{(n)})^Q_{nf}$, let $\overline A\leq W_S(Q)$ be the (nontrivial) stabilizer of $\omega$ in $W_S(Q)$, and $A\leq N_S(Q)$ the preimage of $\overline{A}$.  Clearly $A\leq \Stab_S(\omega)$, and $\abs{A}\geq p^n$.  In other words, every element of $(X_P^{(n)})^Q_{nf}$ lies in $(X_P^{(n)})^A$ for some $A$ of order strictly greater than that of $Q$; the same statement holds for $(X_P^{(n)})^R_{nf}$.  By the inductive hypothesis, $\abs[\big]{(X_P^{(n)})^A}=\abs[\big]{(X_P^{(n)})^{\varphi(A)}}$ for all such $A$, so we conclude
\[
\abs*{(X_P^{(n)})^Q_{nf}}=\abs*{(X_P^{(n)})^R_{nf}}
\]
by the same inclusion-exclusion argument in the proof of Lemma \ref{Sune's lemma}.  Thus $\abs[\big]{(X_P^{(n)})^R}-\abs[\big]{(X_P^{(n)})^Q}=\abs[\big]{(X_P^{(n)})^R_f}-\abs[\big]{(X_P^{(n)})^Q_f}$, so in particular \[c_Q:=\frac{\abs[\big]{(X_P^{(n)})^R}-\abs[\big]{(X_P^{(n)})^Q}}{\abs{ W_S(Q)}}\in\ZZ_{\geq0}.\]
This can be done for all subgroups $Q\leq S$ of order $p^{n-1}$, with chosen representatives for each $\cF$-conjugacy class.

From here it is easy to see that if we set
\[
X_P^{(n-1)}=X_P^{(n)}\amalg\coprod_{\substack{(Q)_S\in Cl(S),\\\text{s.t.\,} \abs Q = p^{n-1}}} c_Q\cdot[Q],
\]
 then $X_P^{(n-1)}$ satisfies \ref{itm:cP1}-\ref{itm:subconjugateorbits} and is $\cF$-stable above level $n-1$, so we're done.
\end{proof}

\begin{cor}\label{F-basis cor}
Choose a fully normalized representative $P^*\in (P)_\cF$ from each class in $Cl(\cF)$. The $\cF$-sets $\{X_{P^*}\mid (P^*)_\cF\in Cl(\cF)\}$ then form a basis for $A_+(\cF)$.
\end{cor}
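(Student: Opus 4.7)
The plan is to establish the two defining properties of a basis for the free commutative monoid $A_+(\cF)$: $\ZZ$-linear independence of the $X_{P^*}$'s in the Grothendieck group $A(\cF) := A_+(\cF) - A_+(\cF)$, and expressibility of any element of $A_+(\cF)$ as a $\ZZ_{\geq 0}$-combination of them.

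For linear independence, I would analyze the fixed-point matrix $M$ with entries $M_{P^*,Q^*} := |X_{Q^*}^{P^*}|$ indexed by the chosen representatives. Property \ref{itm:subconjugateorbits} forces $|X_{Q^*}^{P^*}| = 0$ unless $P^*$ is $\cF$-subconjugate to $Q^*$, so refining the $\cF$-subconjugacy partial order to a total order exhibits $M$ as upper triangular. Each diagonal entry $M_{P^*,P^*}$ equals $|W_S(P^*)| \neq 0$: among the orbits of $X_{P^*}$, only those of the form $[Q^*]$ with $|Q^*| = |P^*|$ contribute to $|X_{P^*}^{P^*}|$, and of these, property \ref{itm:nonzerocoeff} restricts us to $Q^*$ fully normalized and $\cF$-conjugate to $P^*$; since $|Q^*| = |P^*|$, only $S$-conjugates of $P^*$---i.e.\ the orbit $[P^*]$ itself, appearing with multiplicity one by property \ref{itm:cP1}---actually contribute. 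Combined with the injectivity of the fixed-point map $Y \mapsto (|Y^{P^*}|)_{P^*}$ on $A(\cF)$ (by $\cF$-stability together with the standard Burnside-ring fact that an $S$-set is determined by its fixed-point counts), invertibility of $M$ yields $\ZZ$-linear independence of the $\{X_{P^*}\}$.

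For the spanning property, I would induct on the number of $\cF$-classes appearing in the support of $Y \in A_+(\cF)$, peeling off maximal layers. Choose an $\cF$-subconjugacy maximal class $(P^*)_\cF$ meeting the support of $Y$. By maximality, any orbit $[R]$ in the support contributing a fixed point at $P^*$ must satisfy $R \cong_\cF P^*$, and having $|R| = |P^*|$ then forces $R \cong_S P^*$; thus $|Y^{P^*}| = n_{P^*}(Y) \cdot |W_S(P^*)|$, where $n_{P^*}(Y)$ denotes the coefficient of $[P^*]$ in $Y$ as an $S$-set. Applying the analogous computation at any $R \cong_\cF P^*$ lying in the support and invoking $\cF$-stability $|Y^{P^*}| = |Y^R|$ gives $n_{P^*}(Y) \geq 1$, so $Y' := Y - n_{P^*}(Y) X_{P^*}$ is a well-defined candidate. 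Iterating on $Y'$ produces the desired decomposition $Y = \sum z_{P^*} X_{P^*}$ with $z_{P^*} \in \ZZ_{\geq 0}$, provided $Y'$ remains in $A_+(\cF)$ with strictly smaller $\cF$-support.

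The main obstacle is verifying this non-negativity of $Y'$, i.e.\ that $n_R(Y) \geq n_{P^*}(Y) \cdot c_R(X_{P^*})$ for every $R \leq S$. When $R$ is not $\cF$-subconjugate to $P^*$ the right-hand side vanishes by \ref{itm:subconjugateorbits}. When $R \cong_\cF P^*$, the formula $c_R(X_{P^*}) = |N_S(P^*)|/|N_S(R)|$ extracted from the construction in Theorem \ref{F-basis}, together with the analogous fixed-point computation for $R$, yields equality. The delicate case is when $R$ is $\cF$-strictly-subconjugate to $P^*$; here I would argue by downward induction on $|R|$, showing at each level that $\cF$-stability of $Y$ and the already-verified higher levels, combined with Lemma \ref{Sune's lemma}, force the excess $|Y^R| - n_{P^*}(Y)|X_{P^*}^R|$ to be a non-negative multiple of $|W_S(R)|$, in direct parallel with how the coefficients $c_R(X_{P^*})$ themselves are produced in the passage from $X_{P^*}^{(n)}$ to $X_{P^*}^{(n-1)}$.
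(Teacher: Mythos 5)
Your proposal is correct in outline, but it takes a genuinely different (and longer) route than the paper on both halves. For independence, the paper simply reads off the coefficient of the orbit $[P^*]$: by properties \ref{itm:cP1} and \ref{itm:nonzerocoeff}, $X_{P^*}$ is the \emph{only} basis candidate containing $[P^*]$ at all, so any relation collapses coefficient by coefficient; your fixed-point-matrix argument (triangular with respect to $\cF$-subconjugacy via property \ref{itm:subconjugateorbits}, diagonal entries $\abs{W_S(P^*)}$) is valid but needs property \ref{itm:subconjugateorbits} and the mark homomorphism where the paper needs neither. For spanning, the paper avoids your ``main obstacle'' entirely: rather than peeling off one maximal class at a time and verifying that the remainder stays an honest $S$-set, it defines the full candidate $Y:=\coprod_{P^*}c_{P^*}\cdot X_{P^*}$ in one stroke, with every coefficient $c_{P^*}$ read off from the decomposition of $X$ and hence non-negative by fiat, and then shows $X-Y=0$ in the Grothendieck group by taking a maximal-order subgroup $Q$ with $c_Q(X-Y)\neq 0$ and contradicting $\cF$-stability of $X$ via $\abs{X^{Q^*}}=\abs{Y^{Q^*}}=\abs{Y^Q}\neq\abs{X^Q}$. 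Your peeling argument does work, but its delicate step --- $n_R(Y)\geq n_{P^*}(Y)\cdot c_R(X_{P^*})$ for $R$ strictly subconjugate to $P^*$ --- amounts to proving Corollary \ref{cor:minimal Fsets are unique} directly, before the basis theorem is available; the downward induction you sketch does close up (at a fully normalized $R^\sharp\not\cong_\cF P^*$ the coefficient of $X_{P^*}$ vanishes by \ref{itm:nonzerocoeff}, so the remainder's coefficient there is $c_{R^\sharp}(Y)\geq 0$, and Lemma \ref{Sune's lemma} applied to the already-verified truncation of the remainder above level $\abs{R}$ transfers non-negativity from $R^\sharp$ to $R$), but note that the relevant quantity is $\abs{Z^R}$ \emph{minus the fixed points contributed by larger orbits}, not $\abs{Z^R}$ itself, so your phrasing of the ``excess'' needs that correction. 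What your route buys in exchange for the extra work is that the minimality/uniqueness statement of Corollary \ref{cor:minimal Fsets are unique} falls out as a byproduct rather than a consequence.
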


\begin{proof}
Conditions \ref{itm:cP1} and \ref{itm:nonzerocoeff} imply that there can be no non-trivial $\ZZ_{\geq0}$-linear (indeed, $\ZZ$-linear) relations amongst the $X_{P^*}$, so it suffices to show that every $\cF$-set can be written as a sum of these.

Let $X$ be an arbitrary $\cF$-set, and pick a decomposition
\[X=\coprod_{(P)_S\in Cl(S)}c_P\cdot[P].\]
Consider the chosen representative $P^*\in (P)_\cF$ for each $P\leq S$, and set
\[
Y:=\coprod_{P^*} c_{P^*}\cdot[X_{P^*}].
\]
Consider $X-Y\in A(S)$, in the Grothendieck group of $A_+(S)$; if this can be shown to be 0, $X$ will lie in $\Span_{\ZZ_{\geq 0}}\{X_{P^*}\mid (P^*)_\cF\}$, and we're done. We can extend $\abs{X^Q}$ linearly to the formal differences in $A(S)$ in order to count generalized fixed points.
If $X-Y\neq 0$, there is some  subgroup $Q\leq S$ of maximal order such that $c_Q(X-Y)\neq 0$.
But for $Q^*$ the chosen fully $\cF$-normalized representative of $(Q)_\cF$, we have $c_{Q^*}(X-Y)=0$ by construction, so
\[
\abs[\big]{(X-Y)^Q}=c_Q(X-Y)\cdot \abs{W_S(Q)}\neq 0,\quad\textrm{while}\quad
\abs[\big]{(X-Y)^{Q^*}}=c_{Q^*}(X-Y)\cdot\abs{W_S(Q)}=0.
\]
Hence $\abs{X^{Q^*}}=\abs{Y^{Q^*}} = \abs{Y^Q}\neq\abs{X^Q}$ contradicting $\cF$-stability of $X$.
\end{proof}

\begin{cor}\label{cor:minimal Fsets are unique}
Suppose  $P\leq S$ is fully normalized. The $\cF$-set $X_P$ is uniquely determined by properties \ref{itm:cP1} and \ref{itm:nonzerocoeff}, and is the unique minimal $\cF$-set containing $[P]$ as an orbit.

By Remark \ref{rmk:additional properties}, it then follows that $X_P$ depends only on the class $(P)_{\cF}$, and for each fully normalized $Q\in (P)_\cF$ the $\cF$-set $X_P$ contains the orbit $[Q]$ exactly once.
\end{cor}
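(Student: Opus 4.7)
The plan is to leverage Corollary \ref{F-basis cor} together with a straightforward coefficient-matching argument. Given any $\cF$-set $Y$ satisfying conditions \ref{itm:cP1} and \ref{itm:nonzerocoeff} for the fully normalized subgroup $P$, I would first write $Y = \sum_{T^*} a_{T^*} X_{T^*}$ in the $\ZZ_{\geq 0}$-basis of Corollary \ref{F-basis cor}, where $T^*$ ranges over chosen fully normalized representatives for each class in $Cl(\cF)$ (arranged so that $P$ itself is the representative for $(P)_{\cF}$). Uniqueness of $X_P$ then reduces to showing that $a_{T^*}=1$ if $T^*=P$ and $a_{T^*}=0$ otherwise.

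The main step is to identify each coefficient $a_{R^*}$ with the orbit-multiplicity $c_{R^*}(Y)$. In the expansion
\[
c_{R^*}(Y) = \sum_{T^*} a_{T^*}\, c_{R^*}(X_{T^*}),
\]
property \ref{itm:nonzerocoeff} of each basis element $X_{T^*}$, combined with the full normalization of $R^*$, forces $c_{R^*}(X_{T^*})=0$ unless $R^* \cong_{\cF} T^*$, which means $T^* = R^*$ because distinct representatives lie in distinct $\cF$-classes. Together with $c_{R^*}(X_{R^*})=1$ from property \ref{itm:cP1}, this collapses the sum to $a_{R^*} = c_{R^*}(Y)$. Applying properties \ref{itm:nonzerocoeff} and \ref{itm:cP1} to $Y$ then yields $a_{R^*}=0$ for $R^*\neq P$ and $a_P=1$ respectively, so $Y = X_P$. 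This also explains how the two trailing assertions of the corollary follow: were we to choose a different fully normalized $P^\star \in (P)_{\cF}$ as our representative, the same analysis produces $Y = X_{P^\star}$, and Remark \ref{rmk:additional properties}\ref{itm:cPcQ1} gives $X_P = X_{P^\star}$.

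For the minimality claim, the identical coefficient extraction applied to an arbitrary $\cF$-set $Y$ containing $[P]$ as an orbit gives $a_P = c_P(Y)\geq 1$, hence $X_P \subseteq Y$ as $S$-sets. Combined with the uniqueness established above, this exhibits $X_P$ as the unique minimal $\cF$-set containing $[P]$.

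I do not anticipate any serious obstacle here; the proof is essentially a clean exercise in bookkeeping built on top of Corollary \ref{F-basis cor}. The one substantive point, and the main step of the argument, is the diagonalization of the coefficient-extraction formula: this relies crucially on the fact that property \ref{itm:nonzerocoeff} of the basis elements $X_{T^*}$ is phrased in terms of fully normalized subgroups, which matches precisely the setting in which the coefficients $c_{R^*}$ are being extracted.
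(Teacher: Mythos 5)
Your proof is correct and follows essentially the same route as the paper: both arguments rest on Corollary \ref{F-basis cor} together with the observation that properties \ref{itm:cP1} and \ref{itm:nonzerocoeff} single out $X_P$ as the only basis element whose expansion contains the orbit $[P]$ (equivalently, your diagonalization $a_{R^*}=c_{R^*}(Y)$). Your write-up just makes the coefficient extraction explicit where the paper leaves it implicit.
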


\begin{proof}
$X_P$ is part of a basis for $A_+(\cF)$ as in Corollary \ref{F-basis cor}. By properties \ref{itm:cP1} and \ref{itm:nonzerocoeff} $X_P$ is the only basis element that contains $[P]$ as an orbit, so every $\cF$-set containing $[P]$ has to contain a copy of the basis element $X_P$. It follows that $X_P$ is the unique smallest $\cF$-set containing $[P]$.
\end{proof}

This ends our detour to sets with only one group action, and we return to the world of bisets, in particular the $\cF$-semicharacteristic ones.

\section{The parameterization of semicharacteristic bisets of $\cF$}\label{sec:parameterization of semicharacteristic bisets}
In this section Theorem \ref{parameterization of semicharacteristic bisets} parameterizes all the semicharacteristic bisets of $\cF$. The method of approach is to apply the structure results of section \ref{sec:monoid of F-sets} to the product fusion system $\cF\x \cF$ and the monoid of $(\cF\x\cF)$-sets.

\begin{lemma}\label{twisted diagonal subgroups are conjugate to diagonal subgroups}
Let $(P,\varphi)$ and $(Q,\psi)$ be two twisted diagonal subgroups of $S\times S$.  Then $(P,\varphi)\cong_{\cF\times\cF}(Q,\psi)$ if and only if there exist $\cF$-iso\-mor\-phisms $\eta_1\in\cF(P,Q)$ and $\eta_2\in\cF(\varphi P,\psi Q)$ such that
\[
\xymatrix{
P\ar[r]^{\eta_1}_\cong\ar[d]_\varphi&Q\ar[d]^\psi\\
\varphi P\ar[r]^{\eta_2}_\cong&\psi Q
}
\]
commutes.  In particular, any twisted diagonal subgroup $(P,\varphi)\leq S\times S$ with $\varphi\in\cF(P,S)$ is $(\cF\x\cF)$-isomorphic to every $(Q,\iota_Q^S)$ where $Q\cong_\cF P$.
\end{lemma}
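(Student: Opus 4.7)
The plan is to unpack the definition of $(\cF\times\cF)$-isomorphism, which amounts to a componentwise pair of $\cF$-isomorphisms acting on $S\times S$, and to read off the commuting-square condition from how such a pair acts on twisted diagonals. Specifically, for any $\cF$-isomorphisms $\eta_1\in\cF(P,Q)$ and $\eta_2\in\cF(\varphi P,\psi Q)$, the componentwise map $\eta_2\times\eta_1$ sends a point $(\varphi(p),p)\in(P,\varphi)$ to $(\eta_2(\varphi(p)),\eta_1(p))$, and this lies in the graph $(Q,\psi)=\{(\psi(q),q)\mid q\in Q\}$ if and only if $\eta_2(\varphi(p))=\psi(\eta_1(p))$ for every $p\in P$. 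Thus $(\eta_2,\eta_1)$ induces an $(\cF\times\cF)$-isomorphism $(P,\varphi)\to(Q,\psi)$ exactly when the stated square commutes.

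Both directions of the main biconditional follow immediately. For the backward direction, given the square with $\eta_1,\eta_2$ $\cF$-isomorphisms, the pair $(\eta_2,\eta_1)$ restricted to $(P,\varphi)$ is a well-defined bijection onto $(Q,\psi)$ by the preceding calculation, and it is an $(\cF\times\cF)$-morphism by definition of the product fusion system. For the forward direction, any $(\cF\times\cF)$-isomorphism $(P,\varphi)\cong(Q,\psi)$ is by definition the restriction of a componentwise pair $(\eta_2,\eta_1)$ of $\cF$-isomorphisms; projecting to the first and second coordinates shows $\eta_2$ restricts to an $\cF$-iso $\varphi P\to\psi Q$ and $\eta_1$ restricts to an $\cF$-iso $P\to Q$, and the fact that graph elements are carried to graph elements is precisely the commuting square.

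For the particular case, given $Q\cong_\cF P$, pick any $\cF$-isomorphism $\eta_1\colon P\to Q$ (which exists by hypothesis) and set $\eta_2:=\eta_1\circ\varphi^{-1}\colon\varphi P\to Q$, which is an $\cF$-isomorphism because $\varphi^{-1}\in\cF$ by axiom (iii) of a fusion system. Then $\eta_2\circ\varphi=\eta_1=\iota_Q^S\circ\eta_1$, so the commuting square holds with $\psi=\iota_Q^S$, and the first part of the lemma gives $(P,\varphi)\cong_{\cF\times\cF}(Q,\iota_Q^S)$.

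There is no real obstacle here: the content is essentially definitional, and the only vigilance required is careful tracking of domains and codomains of the various maps when identifying how a pair $(\eta_2,\eta_1)$ acts on a twisted diagonal subgroup.
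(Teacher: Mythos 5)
Your proof is correct and is simply the spelled-out version of what the paper dismisses as ``Obvious from the definition of $\cF\times\cF$'': a morphism of $\cF\times\cF$ between twisted diagonals is the restriction of a componentwise pair $(\eta_2,\eta_1)$, and preservation of graphs is exactly the commuting square. The verification of the special case via $\eta_2:=\eta_1\circ\varphi^{-1}$ is also fine.
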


\begin{proof}
Obvious from the definition of $\cF\times \cF$.
\end{proof}

\begin{prop}\label{prop:FFstable}
A (free) $(S,S)$-biset $\Omega$ is $\cF$-stable if and only if $\Omega$ is $(\cF\times\cF)$-stable when viewed as an $(S\times S)$-set.
\end{prop}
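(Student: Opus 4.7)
The plan is to exploit the equivalence between free $(S,S)$-bisets and free $(S\x S)$-sets via $(h,g)\cdot\omega = h\cdot\omega\cdot g^{-1}$, together with the observation that in a free biset only twisted diagonal subgroups of $S\x S$ can have nonempty fixed points. Thus in both the hypothesis of $\cF$-stability and the conclusion of $(\cF\x\cF)$-stability, only twisted diagonals contribute nontrivially, and the proposition reduces to matching the two equivalence relations imposed on such subgroups.

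For the forward direction, I would start with an arbitrary $H \leq S\x S$ and a $(\cF\x\cF)$-morphism $\psi\colon H \to S\x S$, writing $\psi$ as the restriction to $H$ of a product map $(\alpha,\beta)$ with $\alpha \in \cF(\proj_1 H, S)$ and $\beta \in \cF(\proj_2 H, S)$. If $H$ is not a twisted diagonal, so at least one of $H \cap (S\x 1)$ or $H \cap (1\x S)$ is nontrivial, then the injectivity of $\alpha$ and $\beta$ shows the corresponding intersection of $\psi H$ is equally nontrivial, so $\abs{\Omega^H} = \abs{\Omega^{\psi H}} = 0$ by freeness. If $H = (P,\ph)$ is a twisted diagonal, then $\alpha \in \cF(\ph P, S)$ and $\beta \in \cF(P,S)$, and a direct computation shows $\psi H = (\beta P, \alpha\ph\beta^{-1})$; applying the $\cF$-stability hypothesis to the pair $(P,\ph)$ with the $\cF$-isomorphisms $\eta_1 = \beta^{-1}\colon \beta P \to P$ and $\eta_2 = \alpha\colon \ph P \to \alpha\ph P$ gives the required equality of fixed-point orders.

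For the reverse direction, given an $(S,S)$-pair $(P,\ph)$ and $\cF$-isomorphisms $\eta_1\colon Q \to P$ and $\eta_2\colon \ph P \to R$, I would invoke Lemma \ref{twisted diagonal subgroups are conjugate to diagonal subgroups}: the isomorphisms $\eta_1^{-1}\colon P \to Q$ and $\eta_2\colon \ph P \to R$ make its defining square commute, so $(P,\ph)$ and $(Q, \eta_2\ph\eta_1)$ are $(\cF\x\cF)$-conjugate twisted diagonals in $S \x S$. The postulated $(\cF\x\cF)$-stability of $\Omega$ then yields $\abs{\Omega^{(P,\ph)}} = \abs{\Omega^{(Q,\eta_2\ph\eta_1)}}$, which is exactly the condition for $\cF$-stability.

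The only real bookkeeping lies in verifying that product $(\cF\x\cF)$-morphisms neither create nor destroy the twisted-diagonal property; this is immediate from the injectivity of the constituent $\cF$-morphisms on their respective projection subgroups, and is the one step requiring slight care.
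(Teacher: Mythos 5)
Your proof is correct and follows essentially the same route as the paper's: bifreeness reduces everything to twisted diagonal subgroups, morphisms of $\cF\times\cF$ are restrictions of product maps, and Lemma \ref{twisted diagonal subgroups are conjugate to diagonal subgroups} matches the two conjugacy/stability conditions. You simply spell out the details (the computation $\psi H=(\beta P,\alpha\ph\beta^{-1})$ and the vacuous non-diagonal case) that the paper leaves implicit.
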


\begin{proof}
A morphism of $\cF\times\cF$ is the restriction of a morphism $(\varphi,\psi)$, for $\varphi\in\cF(P,S)$ and $\psi\in\cF(Q,S)$, to some subgroup of $P\times Q$.  As $\Omega$ is bifree, the only subgroups of $S\times S$ with nonempty fixed point sets are twisted diagonals $(P,\varphi)$. By Lemma \ref{twisted diagonal subgroups are conjugate to diagonal subgroups} $(P,\varphi)\cong_{\cF\times\cF} (Q,\psi)$ iff there exist $\cF$-isomorphisms $\eta_1\colon Q\xra{\cong} P$ and $\eta_2\colon \ph P \xra{\cong} \psi Q$ such that $\psi=\eta_2\ph\eta_1$. Hence the $(\cF\times\cF)$-stability condition is equivalent to the condition for $\cF$-stable bisets.
\end{proof}

\begin{theorem}\label{parameterization of semicharacteristic bisets}
Let $\cF$ be a saturated fusion system on $S$.  For each $\cF$-conjugacy class of subgroups $(P)_\cF\in Cl(\cF)$ there is an associated $\cF$-semicharacteristic biset $\Omega_P$: Supposing $P$ is fully normalized, $\Omega_P$ is the smallest $\cF$-semicharacteristic biset containing $[P,\iota_P^S]$. The sets $\Omega_P$, taken together, form an additive basis for the free monoid of semicharacteristic bisets of $\cF$.  Moreover, an $\cF$-semicharacteristic biset
\[
\Omega=\coprod_{(P)_\cF\in Cl(\cF)} c_P\cdot\Omega_P
\]
is  $\cF$-characteristic  if and only if $p\nmid c_S$.
\end{theorem}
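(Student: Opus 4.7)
The plan is to transport the entire problem into the realm of $(\cF \times \cF)$-stable $(S \times S)$-sets, where Corollary \ref{F-basis cor} has already supplied a free basis. By Proposition \ref{prop:FFstable}, the $(S,S)$-bisets that are $\cF$-stable are precisely the $(\cF \times \cF)$-stable $(S \times S)$-sets, so the basis $\{X_{(K, \psi)}\}$ of $A_+(\cF \times \cF)$ is indexed by $(\cF \times \cF)$-conjugacy classes of subgroups of $S \times S$ with fully normalized representatives. Corollary \ref{normalizer size in SxS} then shows that the $\cF$-twisted diagonal classes correspond bijectively to $Cl(\cF)$ via the assignment $(P)_\cF \mapsto (P, \iota_P^S)$ for $P$ fully $\cF$-normalized, which motivates the definition $\Omega_P := X_{(P, \iota_P^S)}$.

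The first substantive step will be to confirm that each $\Omega_P$ is $\cF$-generated, hence genuinely $\cF$-semicharacteristic. I expect this to be the main obstacle: one must check that the inductive procedure of Theorem \ref{F-basis}, applied to $(P, \iota_P^S) \leq S \times S$, never strays outside the world of $\cF$-twisted diagonal subgroups. Inspecting that proof reveals that only orbits indexed by $(\cF \times \cF)$-subconjugates of $(P, \iota_P^S)$ are adjoined. Since any subgroup of a twisted diagonal is again a twisted diagonal (the graph of a restricted map) and $(\cF \times \cF)$-conjugation preserves the class of $\cF$-twisted diagonals, every orbit of $\Omega_P$ will take the form $[Q, \psi]$ with $\psi \in \cF(Q, S)$. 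Minimality of $\Omega_P$ among $\cF$-semicharacteristic bisets containing $[P, \iota_P^S]$ then follows from Corollary \ref{cor:minimal Fsets are unique}.

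For the basis statement, I will take an arbitrary $\cF$-semicharacteristic biset $\Omega$ and apply Corollary \ref{F-basis cor} to write $\Omega = \sum_{(K, \psi)} c_{(K, \psi)} X_{(K, \psi)}$ uniquely in $A_+(\cF \times \cF)$. Whenever $(K, \psi)$ fails to be an $\cF$-twisted diagonal, the corresponding basis element $X_{(K, \psi)}$ contains the non-bifree orbit $(S \times S)/(K, \psi)$; since $\Omega$ is bifree the coefficient $c_{(K, \psi)}$ must vanish. The remaining sum is indexed by $Cl(\cF)$ and gives the desired decomposition $\Omega = \sum_{(P)_\cF} c_P \Omega_P$, with uniqueness inherited from Corollary \ref{F-basis cor}.

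Finally, for the $\cF$-characteristic condition I will compute $|\Omega|/|S|$ modulo $p$. Every orbit $[Q, \psi]$ in an $\cF$-generated biset contributes $[S : Q]$ to $|\Omega|/|S|$, and this index is divisible by $p$ precisely when $Q \lneq S$. For $P \lneq S$ every orbit of $\Omega_P$ has stabilizer of order strictly less than $|S|$, so $|\Omega_P|/|S| \equiv 0 \pmod p$. For $P = S$ the distinguished orbit $[S, \iota_S^S]$ contributes $1$, while all other orbits of $\Omega_S$ contribute multiples of $p$, giving $|\Omega_S|/|S| \equiv 1 \pmod p$. Combining yields $|\Omega|/|S| \equiv c_S \pmod p$, so the characteristic condition $p \nmid |\Omega|/|S|$ reduces to $p \nmid c_S$.
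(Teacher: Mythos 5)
Your overall strategy is exactly the paper's: pass to $A_+(\cF\times\cF)$ via Proposition \ref{prop:FFstable}, invoke Theorem \ref{F-basis} and Corollaries \ref{F-basis cor} and \ref{cor:minimal Fsets are unique}, and single out the basis elements indexed by $\cF$-twisted diagonals. But one step of your spanning argument fails as written. You discard the basis elements $X_{(K,\psi)}$ with $(K,\psi)$ not an $\cF$-twisted diagonal by asserting that each such $X_{(K,\psi)}$ contains a non-bifree orbit. That is only true when $(K,\psi)$ is not the graph of an injective homomorphism at all. A subgroup of $S\times S$ can also fail to be an $\cF$-twisted diagonal by being the graph of an injective $\psi\colon K\to S$ that is simply not a morphism of $\cF$; the orbit $[K,\psi]$ is then perfectly bifree, so bifreeness of $\Omega$ does not force $c_{(K,\psi)}=0$. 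What does force it is condition \ref{char:Fgen} of Definition \ref{semicharacteristic definition}: $X_{(K,\psi)}$ contains $[K,\psi]$ with multiplicity one, so a nonzero coefficient would give $\Omega$ a point-stabilizer that is not an $\cF$-twisted diagonal, contradicting $\cF$-generation. This is the hypothesis you never use and cannot avoid.

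The final count also contains an error. The congruence $\abs{\Omega_S}/\abs{S}\equiv 1\pmod p$ is false in general: $(\cF\times\cF)$-stability at the top level forces $\Omega_S$ to contain one orbit $[S,\alpha]$ for each class $[\alpha]\in\Out_\cF(S)$, and each of these contributes $1$ to $\abs{\Omega_S}/\abs{S}$, not a multiple of $p$. Hence $\abs{\Omega_S}/\abs{S}\equiv\abs{\Out_\cF(S)}\pmod p$, and the desired equivalence between $p\nmid\abs{\Omega}/\abs{S}$ and $p\nmid c_S$ survives only because the saturation axiom ($\Aut_S(S)\in\Syl_p(\Aut_\cF(S))$) guarantees $p\nmid\abs{\Out_\cF(S)}$ --- an input your argument never invokes. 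The remaining steps (the definition of $\Omega_P$, its $\cF$-generation via property \ref{itm:subconjugateorbits} of Remark \ref{rmk:additional properties}, its minimality, and the divisibility of $\abs{\Omega_P}/\abs{S}$ by $p$ for $P\neq S$) are correct and coincide with the paper's proof.
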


\begin{proof}
Pick a representative $P\in(P)_\cF$ such that $(P,\iota_P^S)$ is fully normalized in $\cF\times\cF$; we can choose such a $P$ by Corollary \ref{normalizer size in SxS}.  Define $\Omega_P$ to be the unique $(\cF\times\cF)$-set corresponding to the subgroup $(P,\iota_P^S)\leq S\times S$ defined in Theorem \ref{F-basis}, and by Corollary \ref{cor:minimal Fsets are unique} this is the smallest $(\cF\x\cF)$-set containing $[P,\iota_P^S]$. Property \ref{itm:subconjugateorbits} of remark \ref{rmk:additional properties} states that every point-stabilizer of $\Omega_P$ is $(\cF\x \cF)$-subconjugate to the diagonal $(P,\iota_P^S)$, so $\Omega_P$ is $\cF$-generated and hence semicharacteristic for $\cF$.

The collection $\left\{\Omega_P\right\}_{(P)_\cF\in Cl(\cF)}$ forms a basis for a submonoid of $A_+(\cF\times\cF)$, as it is part of the basis for the entire monoid  $A_+(\cF\times\cF)$.  The submonoid spanned by the $\Omega_P$ consists only of those $(\cF\times\cF)$-sets whose point-stabilizers are $\cF$-twisted diagonal subgroups.  By the same downward induction in the proof of Corollary \ref{F-basis cor}, we see that every $(\cF\times\cF)$-set with point-stabilizers $\cF$-twisted diagonal subgroups lies in this submonoid.  Finally, being $\cF$-semicharacteristic is equivalent to having $\cF$-twisted diagonal point-stabilizers and being $(\cF\times\cF)$-stable (Proposition \ref{prop:FFstable}), thus proving that the $\Omega_P$ form a basis for the monoid of semicharacteristic bisets of $\cF$.

To prove the last claim, it is enough to show that $p$ divides $\abs{\Omega_P/S}=\abs{\Omega_P}/\abs{S}$ if and only $P\neq S$.  As $\abs{[P,\varphi]} = \abs{S\times S}/\abs{P}$, it is clear that $p$ divides $\abs{[P,\varphi]}/\abs{S}$ if and only if $\abs{P}<\abs{S}$.  As every point-stabilizer of $\Omega_P$ is $(\cF\x\cF)$-subconjugate to $[P,\iota_P^S]$, it follows that $\abs{\Omega_P}$ is divisible by $\abs{[P,\iota_P^S]}/\abs{S}$ which is divisible by $p$ if $P\neq S$.  Therefore the choice of the number $c_P$ has no effect on whether or not $\Omega$ is $\cF$-characteristic when $P\neq S$.

Finally, $\Omega_S$ can be decomposed
\[
\Omega_S=\left(\coprod_{[\alpha]\in\Out_\cF(S)}[S,\alpha]\right)\amalg\left(
\coprod_{\substack{\abs{P}<\abs{S}\\ \varphi\in\cF(P,S)}}c_{P,\varphi}[P,\varphi]\right)
\]
for constants $c_{P,\varphi}\in\ZZ_{\geq 0}$.  Each term $[S,\alpha]$ has $\abs{S}$ elements, while $p\abs S\Mid \abs{[P,\varphi]}$ when $\abs{P}<\abs{S}$.  Therefore $\abs{\Omega_S}/\abs{S}\equiv\abs{\Out_\cF(S)}\not\equiv 0$ modulo $p$ by the saturation axioms of fusion systems.
\end{proof}

\begin{cor}\label{minimal characteristic biset}
Each fusion system has a unique minimal $\cF$-characteristic biset $\Lambda=\Lambda_\cF$, in the sense that if $\Omega$ is any $\cF$-characteristic biset for $\cF$, up to isomorphism we have $\Lambda\subseteq\Omega$.
\end{cor}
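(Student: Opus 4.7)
The plan is to read off the corollary directly from the parameterization established in Theorem \ref{parameterization of semicharacteristic bisets}. That theorem gives an explicit additive basis $\{\Omega_P\}_{(P)_\cF \in Cl(\cF)}$ for the monoid of $\cF$-semicharacteristic bisets, and singles out the characteristic bisets among these as precisely those decompositions $\Omega = \coprod_{(P)_\cF} c_P \cdot \Omega_P$ with $p \nmid c_S$. Since the coefficient $c_S$ must be both a non-negative integer and coprime to $p$, the smallest admissible value is $c_S = 1$; the remaining coefficients $c_P$ (for $P \not\sim_\cF S$) face no lower bound beyond $c_P \geq 0$, so the natural candidate is $\Lambda_\cF := \Omega_S$.

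The first step is to verify that $\Lambda_\cF = \Omega_S$ actually is $\cF$-characteristic. This is immediate: it is $\cF$-semicharacteristic by Theorem \ref{parameterization of semicharacteristic bisets} (as a basis element), and its decomposition has $c_S = 1$, which is coprime to $p$, so axiom \ref{char:ndegen} in Definition \ref{semicharacteristic definition} holds.

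The second step is to show $\Lambda_\cF \subseteq \Omega$ for every $\cF$-characteristic $\Omega$. Given such an $\Omega$, write $\Omega = \coprod_{(P)_\cF} c_P \cdot \Omega_P$ in its unique basis expansion. The characteristic condition forces $c_S \not\equiv 0 \pmod p$, in particular $c_S \geq 1$, so $\Omega$ contains a disjoint summand isomorphic to $\Omega_S = \Lambda_\cF$. Uniqueness of $\Lambda_\cF$ follows from the freeness of the monoid of $\cF$-semicharacteristic bisets: if $\Lambda'$ were another minimal characteristic biset, then by symmetry both $\Lambda_\cF \subseteq \Lambda'$ and $\Lambda' \subseteq \Lambda_\cF$, which in the free commutative monoid forces $\Lambda' \cong \Lambda_\cF$.

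There is no real obstacle here beyond noting that the phrase ``$\Lambda \subseteq \Omega$ as $(S,S)$-bisets'' is compatible with the monoid structure on semicharacteristic bisets: sub-$(S,S)$-bisets of a characteristic biset $\Omega$ which are themselves semicharacteristic correspond bijectively to basis decompositions with coordinatewise smaller coefficients. This follows from the fact that every sub-$(S,S)$-biset of $\Omega$ is a disjoint union of $(S,S)$-orbits, each of which appears in the basis decomposition of exactly one $\Omega_P$; $\cF$-generation is automatically inherited by subbisets, and the inclusion order on $\cF$-semicharacteristic subbisets matches the coordinatewise order on coefficient tuples $(c_P)$. Thus the minimal element in the poset of characteristic bisets is the one with $c_S = 1$ and all other coefficients zero, namely $\Omega_S$, completing the proof.
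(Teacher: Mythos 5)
Your proof is correct and follows exactly the paper's approach: the paper simply defines $\Lambda_\cF := \Omega_S$ and declares the rest immediate from Theorem \ref{parameterization of semicharacteristic bisets}, and your write-up just spells out that immediacy (the $p\nmid c_S$ condition forces $c_S\geq 1$ in any characteristic biset, so each contains a copy of $\Omega_S$). The extra paragraph on compatibility of the containment order with the coordinatewise order on coefficients is harmless but not needed for the statement as phrased.
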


\begin{proof}
Define $\Lambda_\cF=\Omega_S$ in the notation of  Theorem \ref{parameterization of semicharacteristic bisets}; the rest is immediate.
\end{proof}

\begin{prop}
Each of the $\cF$-semicharacteritic basis elements $\Omega_P$ is a symmetric $(S,S)$-biset. Hence every $\cF$-semicharacteristic biset is symmetric.
\end{prop}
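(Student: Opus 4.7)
The plan is to reduce symmetry of a general $\cF$-semicharacteristic biset $\Omega=\coprod c_P\Omega_P$ to symmetry of each basis piece $\Omega_P$, and then to deduce the latter from the minimality characterization of $\Omega_P$. Since the opposite construction distributes over disjoint union, once each $\Omega_P$ is symmetric we obtain $\Omega^{\mathrm{o}}\cong\coprod c_P\Omega_P^{\mathrm{o}}\cong\coprod c_P\Omega_P=\Omega$, so only the basis case requires argument.

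The key observation is that, viewed as $(S\times S)$-sets, $\Omega^{\mathrm{o}}$ is precisely $\Omega$ pulled back along the swap automorphism $\sigma\colon S\times S\to S\times S$, $\sigma(a,b)=(b,a)$. A direct unfolding of the definitions gives $(s_1,s_2)\cdot\omega^{\mathrm{o}}=(s_2,s_1)\cdot\omega$, so the stabilizer of $\omega^{\mathrm{o}}$ in $\Omega^{\mathrm{o}}$ is $\sigma$ applied to the stabilizer of $\omega$ in $\Omega$. Now $\sigma$ sends an $\cF$-twisted diagonal $(Q,\psi)$ to $(\psi Q,\psi^{-1})$, which is again an $\cF$-twisted diagonal by fusion-system axiom (iii); thus taking opposites preserves $\cF$-generatedness. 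Moreover, $\sigma$ is an automorphism of the fusion system $\cF\times\cF$ on $S\times S$ (it merely swaps the two factors), so pullback along $\sigma$ preserves $(\cF\times\cF)$-stability, which by Proposition \ref{prop:FFstable} translates into $\cF$-stability of $\Omega^{\mathrm{o}}$. Hence the opposite of any $\cF$-semicharacteristic biset is again $\cF$-semicharacteristic.

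Apply this to $\Omega_P$, choosing $P$ fully normalized so that $(P,\iota_P^S)$ is fully $(\cF\times\cF)$-normalized (Corollary \ref{normalizer size in SxS}). The crucial point is that the diagonal subgroup $(P,\iota_P^S)=\{(p,p)\mid p\in P\}$ is literally fixed by $\sigma$; consequently the orbit $[P,\iota_P^S]$ appears in $\Omega_P^{\mathrm{o}}$ with the same multiplicity as in $\Omega_P$, namely $1$ by Theorem \ref{F-basis}\ref{itm:cP1}. Thus $\Omega_P^{\mathrm{o}}$ is an $\cF$-semicharacteristic biset containing $[P,\iota_P^S]$, and the minimality characterization of $\Omega_P$ from Corollary \ref{cor:minimal Fsets are unique} (applied in $\cF\times\cF$ via Proposition \ref{prop:FFstable}) gives $\Omega_P\subseteq\Omega_P^{\mathrm{o}}$. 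A cardinality comparison — $\abs{\Omega_P}=\abs{\Omega_P^{\mathrm{o}}}$ since the two share an underlying set — forces equality, yielding $\Omega_P\cong\Omega_P^{\mathrm{o}}$ as $(S,S)$-bisets.

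There is no real obstacle here: once one recognizes that the opposite corresponds to pullback along swap, symmetry is a formal consequence of the invariance of the distinguished generating orbit together with the uniqueness of $\Omega_P$ as the minimal semicharacteristic biset containing it.
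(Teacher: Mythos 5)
Your argument is correct and follows essentially the same route as the paper's proof: observe that $\Omega_P^{\mathrm{o}}$ is again $\cF$-semicharacteristic and still contains the orbit $[P,\iota_P^S]$ (since the diagonal is fixed by the swap), invoke the minimality of $\Omega_P$ to get $\Omega_P\subseteq\Omega_P^{\mathrm{o}}$, and conclude equality by cardinality. The only difference is that you spell out, via the swap automorphism of $S\times S$, why taking opposites preserves $\cF$-generatedness and $\cF$-stability, which the paper leaves implicit.
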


\begin{proof}
$\Omega_P^\mathrm{o}$ is $\cF$-semicharacteristic and contains the orbit $[P,\iota_P^S]^\mathrm{o}\cong [P,\iota_P^S]$. Because $\Omega_P$ is the smallest $\cF$-semicharacteristic biset containing $[P,\iota_P^S]$, we must have $\Omega_P\subseteq \Omega_P^\mathrm{o}$. Size considerations, or applying $(-)^\mathrm{o}$ again, tell us that equality $\Omega_P=\Omega_P^\mathrm{o}$ holds.
\end{proof}

\section{Minimal characteristic bisets of constrained fusion systems}\label{sec:ModelsOfConstrainedFusionSystems}
We know that any finite group $G$ is a $\cF$-characteristic biset for its associated fusion system $\cF_S(G)$; see example \ref{ex:G is characteristic}. For a constrained fusion system $\cF$, a saturated fusion system that contains a normal and $\cF$-centric subgroup, Broto-Castellana-Grodal-Levi-Oliver have shown that $\cF$ has a unique minimal group \emph{model}. This section shows that the model for a contained fusion system is not just a $\cF$-characteristic biset, it is always isomorphic to the minimal $\cF$-characteristic biset for the fusion system.

\begin{prop}\label{stabilizers contain normal p-subgroups}
Let $G$ be a finite group with $\cF=\cF_S(G)$ and $N\leq S$ a normal subgroup of $G$.  If $(P,\varphi)$ is a point-stabilizer of the $(S,S)$-biset $_SG_S$, then $N\leq P$.
\end{prop}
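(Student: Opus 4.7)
The plan is to simply unpack the definition of a point-stabilizer in ${}_S G_S$ and observe that normality of $N$ makes the answer automatic. The content is essentially tautological once we identify the stabilizer explicitly, so there is no real obstacle; the only subtlety is keeping track of the left/right multiplication conventions for the biset structure.

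First I would choose a representative $g \in G$ of the orbit whose stabilizer is $(P,\varphi)$. By definition of the biset structure on ${}_S G_S$ (left and right multiplication),
\[
\Stab_{S\times S}(g) = \bigl\{(b,a)\in S\times S \bigm| b\cdot g = g\cdot a\bigr\} = \bigl\{(b,a)\in S\times S \bigm| b = gag^{-1}\bigr\}.
\]
This pair $(b,a)$ lies in $S\times S$ iff $a \in S$ and $gag^{-1}\in S$, i.e., iff $a \in S \cap g^{-1}Sg$. Hence, up to the canonical identification described in Section \ref{secBisets}, we have $P = S \cap g^{-1}Sg$ and $\varphi = c_g|_P$, recovering the formula for stabilizers noted in Example \ref{ex:G is characteristic}.

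Now I would use normality: since $N \trianglelefteq G$, conjugation by $g^{-1}$ sends $N$ to itself, so $g^{-1}Ng = N \leq S$. Combined with $N \leq S$, this gives $N \leq S \cap g^{-1}Sg = P$, as required. (As a sanity check, $\varphi$ restricts on $N$ to $c_g|_N$, which is an automorphism of $N$ lying in $\Aut_\cF(N)$, consistent with $N$ being normal in $\cF$.) The argument is independent of the chosen representative $g$ of the orbit, since any other representative differs by left and right multiplication by elements of $S$, which only changes $(P,\varphi)$ within its $(S\times S)$-conjugacy class.
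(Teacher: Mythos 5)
Your proof is correct and follows essentially the same route as the paper's: both reduce to the observation that the stabilizer of $g\in{}_SG_S$ is the twisted diagonal of $c_g$ on $S\cap g^{-1}Sg$ (equivalently, that conjugation by $g$ carries $N$ into $S$), so normality of $N$ in $G$ immediately forces $N\leq P$. The only cosmetic difference is that you compute the full stabilizer explicitly, whereas the paper shows that any stabilizing pair $(Q,\psi)$ extends to one with source $N\cdot Q$; the content is identical.
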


\begin{proof}
Pick $g\in G$ and suppose that $(Q,\psi)$ stabilizes $g$, so that $g\cdot q=\psi(q)\cdot g$ for all $q\in Q$.  Therefore $\psi(q)=gqg^{-1}$ and $g\in N_G(Q,S)$.  As $N\trianglelefteq G$, we have $g\in N_G(N\cdot Q,N\cdot S)$.  As $N\leq S$, if we set $P=N\cdot Q$ we have that conjugation by $g$ induces a map $\varphi\in\cF(P,S)$.  Thus $gpg^{-1}=\varphi(p)$ for all $p\in P$, or $g\cdot p=\varphi(p)\cdot g$. Thus $g\in({}_SG_S)^{(P,\varphi)}$ and $(P,\varphi)\leq \Stab_{S\times S}(g)$.  The result follows.
\end{proof}

Note that in Proposition \ref{stabilizers contain normal p-subgroups}, we do not assume that $S\in\Syl_p(G)$, only that  $S$ contains a normal $p$-subgroup of $G$.  If we additionally require that $S$ is Sylow in $G$, there is a canonical choice for $N\trianglelefteq G$, namely the largest normal $p$-subgroup of $G$.

\begin{notation}
If $G$ is a finite group, $O_p(G)$ denotes the largest normal $p$-subgroup of $G$, and $O_{p'}(G)$ the largest normal $p'$-subgroup.
\end{notation}

\begin{cor}\label{stabilizers of Sylow inclusions contain O_p(G)}
Let $G$ be a finite group with $S\in\Syl_p (G)$ and $\cF=\cF_S(G)$. If the $\cF$-characteristic biset $_S G_S$ decomposes as $\coprod\limits_{(P)_\cF\in Cl(\cF)}c_P\cdot\Omega_P$, then  $c_P\neq 0$ implies $O_p(G)\leq P$.
\end{cor}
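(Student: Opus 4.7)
The plan is to combine Proposition \ref{stabilizers contain normal p-subgroups} with the structural properties of the basis elements $\Omega_P$ established in Theorem \ref{parameterization of semicharacteristic bisets}. Since $S\in\Syl_p(G)$, the largest normal $p$-subgroup $O_p(G)$ of $G$ satisfies $O_p(G)\leq S$, so it is a legitimate choice of $N$ in Proposition \ref{stabilizers contain normal p-subgroups}. That proposition then says that every point-stabilizer of ${}_SG_S$ contains $O_p(G)$ in its first coordinate.

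Next, I would unpack what it means that $c_P\neq 0$ in the decomposition ${}_SG_S\cong\coprod c_P\cdot\Omega_P$. Choose the representative $P$ in its $\cF$-class to be fully normalized. Then by property \ref{itm:cP1} of Theorem \ref{F-basis} (applied to $\cF\times\cF$ via Proposition \ref{prop:FFstable}), the basis biset $\Omega_P$ contains the orbit $[P,\iota_P^S]$ with multiplicity $1$. Consequently, if $c_P\neq 0$, then ${}_SG_S$ contains at least one copy of the orbit $[P,\iota_P^S]$, i.e.\ there is some $g\in G$ whose $(S\times S)$-stabilizer is $(S\times S)$-conjugate to $(P,\iota_P^S)$.

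The final step is to apply Proposition \ref{stabilizers contain normal p-subgroups} to this stabilizer. Any $(S\times S)$-conjugate of $(P,\iota_P^S)$ has the form $(P',\iota_{P'}^S)$ for some $S$-conjugate $P'$ of $P$. By the proposition, $O_p(G)\leq P'$, and since $O_p(G)\trianglelefteq S$, conjugation by an element of $S$ preserves $O_p(G)$, so $O_p(G)\leq P$ as well.

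There is no real obstacle here: all the work is already in Proposition \ref{stabilizers contain normal p-subgroups} and the minimality/normalization properties of $\Omega_P$ from Section \ref{sec:monoid of F-sets}. The only small subtlety is matching the chosen representative of $(P)_\cF$ with one for which $(P,\iota_P^S)$ is the actual stabilizer of a point in the $\Omega_P$-summand (handled via Corollary \ref{normalizer size in SxS}), after which the conclusion $O_p(G)\leq P$ is automatic.
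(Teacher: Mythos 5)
Your proposal is correct and follows essentially the same route as the paper: apply Proposition \ref{stabilizers contain normal p-subgroups} with $N=O_p(G)\leq S$, and use that $\Omega_P$ contains the orbit $[P,\iota_P^S]$ (hence a point with stabilizer the diagonal of $P$) to conclude $c_P=0$ whenever $O_p(G)\not\leq P$. One cosmetic slip: an $(S\times S)$-conjugate of $(P,\iota_P^S)$ has the form $(P',c_s|_{P'})$ for $P'$ an $S$-conjugate of $P$, not $(P',\iota_{P'}^S)$, but since you only use that the source is $S$-conjugate to $P$ and $O_p(G)\trianglelefteq S$, the argument is unaffected.
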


\begin{proof}
By Proposition \ref{stabilizers contain normal p-subgroups} and the fact that $O_p(G)=\bigcap\limits_{S'\in\Syl_p(G)} S'$, we see that every every point-stabilizer of $_SG_S$ is of the form $(P,\varphi)$ with $O_p(G)\leq P$.  As the $\cF$-semicharacteristic biset $\Omega_Q$ contains the $(S,S)$-biset $[Q,\iota_Q^S]$, which has an element with stabilizer $(Q,\iota_Q^S)$, it follows that $c_Q=0$ for all $Q\not\geq O_p(G)$.  The result follows.
\end{proof}
There is a general version of Proposition \ref{stabilizers contain normal p-subgroups} and Corollary \ref{stabilizers of Sylow inclusions contain O_p(G)} for abstract fusion systems (Proposition \ref{prop:OpF contained in stabilizers}),  but the proof is   more involved.

\begin{definition}
Let $G$ be a finite group.
\begin{itemize}
\item $G$ is \emph{$p'$-reduced} if $O_{p'}(G)=1$.
\item If $G$ is $p'$-reduced, $G$ is \emph{$p$-constrained} if $C_G(O_p(G))\leq O_p(G)$.
\end{itemize}
Note that $G/O_{p'}(G)$ is always $p'$-reduced, so that we might define a general $G$ to be $p$-constrained if $G/O_p'(G)$ is $p$-constrained.  We will not make use of this definition here.
\end{definition}

\begin{definition}
Let $\cF$ be a saturated fusion system on $S$. We write $O_p(\cF)$ for the largest normal subgroup of $\cF$.  Thus, $O_p(\cF)\trianglelefteq S$ is maximal subject to the requirement that for every $\varphi\in\cF(P,Q)$, there is some extension $\widetilde\varphi\in\cF(P\cdot O_p(\cF),Q\cdot O_p(\cF))$ such that $\widetilde\varphi(O_p(\cF))=O_p(\cF)$.

A saturated fusion system $\cF$ is \emph{constrained} if $O_p(\cF)$ is $\cF$-centric, or equivalently if $C_S(O_p(\cF))\leq O_p(\cF)$.
A \emph{model} for the constrained fusion system $\cF$ is a finite group $M$ that is $p'$-reduced, $p$-constrained, contains $S$ as a Sylow $p$-subgroup, and  $\cF=\cF_S(M)$.
\end{definition}

\begin{theorem}[{\cite[Proposition C]{BCGLO1}}]
Every constrained fusion system has a unique model.
\end{theorem}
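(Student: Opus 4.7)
The plan is to realize any model as a central extension of $\bar L := \Aut_\cF(Q)$ by $Z(Q)$, where $Q := O_p(\cF)$, and then exploit the rigidity of such extensions when pulled back to a Sylow subgroup in the $p$-primary setting. First, observe that the constrained hypothesis gives $C_S(Q) = Z(Q)$, and normality of $Q$ in $\cF$ gives $N_S(Q) = S$, so $\Aut_S(Q) \cong S/Z(Q)$; by the saturation of $\cF$ this is a Sylow $p$-subgroup of $\bar L$.

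For uniqueness, let $M$ be any model. The $p$-constrained hypothesis on $M$ forces $C_M(O_p(M)) \leq O_p(M)$, and a short argument using $O_{p'}(M) = 1$ together with normality of $Q$ in $\cF = \cF_S(M)$ identifies $O_p(M) = Q$ (hence $C_M(Q) = Z(Q)$). Conjugation on $Q$ then provides a central extension
\[
1 \to Z(Q) \to M \to \bar L \to 1,
\]
whose pullback along $\Aut_S(Q) \hookrightarrow \bar L$ recovers $1 \to Z(Q) \to S \to \Aut_S(Q) \to 1$. The classifying class $[M] \in H^2(\bar L; Z(Q))$ must therefore restrict to the fixed Sylow class $[S]$. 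Since $Z(Q)$ is a $p$-group, the usual transfer-restriction argument shows that restriction to a Sylow is injective on $H^2(\bar L; Z(Q))$, so $[M]$ is determined by $[S]$, and $M$ is unique up to isomorphism.

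For existence, one must lift $[S] \in H^2(\Aut_S(Q); Z(Q))$ to $H^2(\bar L; Z(Q))$. By the stable-elements theorem, such a lift exists iff $[S]$ is $\cF$-stable, and this stability follows from saturation together with the fact that every $\cF$-automorphism of $Q$ lies in $\bar L$ and respects the $S/Z(Q)$-extension structure. From the lifted class, assemble $M$ as the corresponding central extension. One then checks that $|M|/|S| = [\bar L : \Aut_S(Q)]$ is prime to $p$ (so $S \in \Syl_p(M)$), that $O_{p'}(M) = 1$ and $C_M(Q) = Z(Q) \leq Q = O_p(M)$ (so $M$ is $p'$-reduced and $p$-constrained), and finally that $\cF_S(M) = \cF$.

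The main obstacle is the last equality of fusion systems. One inclusion is automatic, since all $M$-conjugations project to elements of $\bar L = \Aut_\cF(Q)$ and hence induce morphisms of $\cF$. The reverse inclusion is the delicate part: a morphism $\ph \in \cF(P,R)$ between arbitrary subgroups of $S$ must be realized by an actual element of $M$. Here $\cF$-centricity of $Q$ is essential: every $\ph$ extends to $PQ \to RQ$ sending $Q$ to $Q$ by normality of $Q$, and such an extension is determined by its restriction to $Q$ up to an element of $Z(Q)$. This lets us lift $\ph$ through the extension defining $M$, completing the proof.
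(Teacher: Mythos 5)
A preliminary remark: the paper does not actually prove this statement --- it quotes it as \cite[Proposition C]{BCGLO1} --- so your proposal is being measured against the literature proof (essentially Aschbacher--Kessar--Oliver, Theorem III.5.10), and in outline you have reconstructed that argument: identify $O_p(M)=Q:=O_p(\cF)$ and $C_M(Q)=Z(Q)$ for any model, classify $M$ as an extension of $\bar L=\Aut_\cF(Q)$ by $Z(Q)$ via a class in $H^2(\bar L;Z(Q))$, and use the prime-to-$p$ index of $\Aut_S(Q)$ in $\bar L$ to pin that class down by restriction. Two caveats already here: the extension is \emph{not} central ($\bar L$ acts on $Z(Q)$ by restriction of automorphisms, in general nontrivially), although the classification by $H^2$ with that module structure and the transfer--restriction argument survive unchanged; and the stability needed for the lift is stability with respect to fusion in $\bar L$, i.e.\ in $\cF_{\Aut_S(Q)}(\bar L)$, which itself has to be verified via the extension axiom rather than simply asserted.

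The genuine gap is in the final step $\cF_S(M)=\cF$, specifically in the claim that one inclusion is ``automatic.'' Knowing that $c_g|_Q\in\Aut_\cF(Q)$ for $g\in M$ does \emph{not} yield $c_g|_P\in\cF(P,S)$ for other $P\leq S$ with $gPg^{-1}\leq S$. What comes for free is this: putting $\alpha=c_g|_Q$, the extension axiom gives some $\hat\alpha\in\cF(N_\alpha,S)$ extending $\alpha$, and $C_M(Q)=Z(Q)$ forces $c_g(x)=\hat\alpha(x)\,z_x$ with $z_x\in Z(Q)$; but $x\mapsto z_x$ is only a crossed homomorphism $N_\alpha/Q\to Z(Q)$, and $H^1$ of a $p$-group with $p$-group coefficients need not vanish, so $c_g|_{N_\alpha}$ is not visibly of the form $\hat\alpha\circ c_z$ and hence not visibly a morphism of $\cF$. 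Arranging this (in effect, choosing the $2$-cocycle defining $M$ compatibly with the $\cF$-extensions of each $\alpha\in\bar L$) is where the real content of the existence half lives. Relatedly, your reverse inclusion invokes the rigidity of extensions over a centric subgroup (Proposition \ref{every morphism in the centric orbit category is epi reason}) to compare an $\cF$-morphism with an $M$-conjugation; that statement concerns two morphisms of a single saturated fusion system and is only applicable once the forward inclusion is known, so as written the two halves of the verification are circular. The identification $O_p(M)=Q$ for an arbitrary model also needs its advertised short argument (e.g.\ showing $\Aut_Q(O_p(M))$ is normal in $\Aut_M(O_p(M))$ using normality of $Q$ in $\cF$), but that one really is short.
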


We then reach the main result of this section describing the model of a constrained fusion system as a $\cF$-characteristic biset.
\begin{theorem}\label{models are minimal characteristic bisets}
Let $\cF$ be a constrained fusion system on $S$ and $M$ the model for $\cF$.  Then the $(S,S)$-biset $_SM_S$ is the unique minimal $\cF$-characteristic biset $\Lambda_\cF$ of $\cF$.
\end{theorem}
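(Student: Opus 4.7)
The plan is to decompose $_SM_S$ in the basis of $\cF$-semicharacteristic bisets from Theorem~\ref{parameterization of semicharacteristic bisets} and show that only $\Omega_S$ appears, with multiplicity one. Since $_SM_S$ is $\cF$-characteristic by Example~\ref{ex:G is characteristic}, it admits a unique expansion
\[
_SM_S = \coprod_{(P)_\cF \in Cl(\cF)} c_P\cdot \Omega_P,
\]
with $P$ ranging over fully $\cF$-normalized representatives, and the task reduces to showing that $c_S = 1$ and $c_P = 0$ for all other $P$. The whole argument rests on Corollary~\ref{stabilizers of Sylow inclusions contain O_p(G)} together with the defining property $C_M(O_p(M)) \leq O_p(M)$ of the model.

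A first reduction is immediate from Corollary~\ref{stabilizers of Sylow inclusions contain O_p(G)}: every point-stabilizer of $_SM_S$ contains $O_p(M)$, so $c_P = 0$ whenever $P \not\geq O_p(M)$. To compute $c_S$, I would count orbits of type $[S, \alpha]$: among the basis elements only $\Omega_S$ contributes such orbits (the stabilizers in $\Omega_P$ for $P < S$ all have order strictly less than $\abs{S}$), and Remark~\ref{rmk:additional properties}(iv) gives exactly $\abs{\Out_\cF(S)}$ of them. In $_SM_S$, orbits of type $[S, \alpha]$ correspond to double cosets $SgS = gS$ for $g \in N_M(S)$, of which there are $\abs{N_M(S)/S}$. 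The model property yields $C_M(S) \leq C_M(O_p(M)) \leq O_p(M) \leq S$, so $\Out_\cF(S) = N_M(S)/S$; the two orbit counts match and $c_S = 1$.

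The main content is showing $c_P = 0$ for each fully $\cF$-normalized $P$ with $O_p(M) \leq P < S$. I would first observe that $_SM_S$ admits no orbit of type $[P, \iota_P^S]$ in this range: such an orbit would be represented by some $g \in M$ with $g \in C_M(P)$ and $S \cap S^{g^{-1}} = P$, but the model property forces $C_M(P) \leq C_M(O_p(M)) \leq O_p(M) \leq S$, hence $g \in S$ and $P = S$, contradiction. To turn this into the vanishing of $c_P$, I would expand the total count of $[P, \iota_P^S]$ orbits inside the decomposition. By Corollary~\ref{normalizer size in SxS} the diagonal $(P, \iota_P^S)$ is fully $(\cF\x\cF)$-normalized, so Remark~\ref{rmk:additional properties}(iv) gives that $\Omega_P$ contributes exactly one copy of $[P, \iota_P^S]$, while each other basis element $\Omega_Q$ contributes a non-negative number (and only when $P <_\cF Q$). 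Since the total vanishes and every term is non-negative, the $\Omega_P$ term alone forces $c_P = 0$.

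The main obstacle lies in this last step: isolating the coefficient $c_P$ inside the global count of $[P, \iota_P^S]$ orbits requires both that $\Omega_P$ contributes exactly one such orbit (combining Remark~\ref{rmk:additional properties}(iv) with Corollary~\ref{normalizer size in SxS}) and that every other basis element's contribution is non-negative, so that the vanishing of the total decouples the $P$-term from the rest. Everything else is routine, once the model property has been used to eliminate centralizers above $O_p(M)$.
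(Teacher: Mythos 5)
Your proposal is correct and follows essentially the same route as the paper: both arguments reduce to the basis decomposition of Theorem \ref{parameterization of semicharacteristic bisets} and use the model property $C_M(P)\leq C_M(O_p(M))\leq O_p(M)\leq S$ to rule out any point-stabilizer of the form $(P,\iota_P^S)$ with $P<S$. The only (minor) difference is in pinning down $c_S=1$: the paper shows that all points with stabilizer $(S,\id)$ lie in $Z(S)$ and hence in the single orbit $_SS_S$, whereas you count all orbits with stabilizer of order $\abs{S}$ against $\abs{N_M(S)/S}=\abs{\Out_\cF(S)}$ --- both counts are immediate consequences of the same centralizer bound.
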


\begin{proof}
We will show (1) if $(P,\iota_P^S)$ is a point-stabilizer of $_SM_S$, then $P=S$, and (2) any two elements of $_SM_S$ whose stablizers are $(S,\id)$ lie in the same $(S,S)$-orbit. In light of the characterization of basis element of $A_+(\cF\times\cF)$ from Theorem \ref{F-basis}, the result will follow immediately from these facts and Theorem \ref{parameterization of semicharacteristic bisets}: (1) shows that $_S M_S$ is a multiple of $\Omega_S=\Lambda_\cF$, and (2) shows that $_S M_S$ contains at most one copy of $\Omega_S$.

(1):  Pick $m\in{}_SM_S$,  $\Stab_{S\x S}(m)=(P,\iota_P^S)$. By Proposition \ref{stabilizers contain normal p-subgroups}, we may assume that $O_p(G)\leq P$.  Thus for any $m\in ({}_SM_S)^{(P,\iota_P^S)}$, we have $m\cdot a = a\cdot m$ for all $a\in P$.  Therefore $m\in C_M(P)\leq C_M(O_p(G))\leq O_p(G)\leq P$, so that $m\in S$ and $m$ induces the automorphism $c_{m}\in\Inn(S)$.  Thus for all $s\in S$, $m\cdot s=c_{m}(s)\cdot m$, or $m\in({}_SM_S)^{(S,c_{m})}$.  As $(P,\iota_P^S)$ was already identified as the stabilizer of $m$, we conclude $P=S$ and $m\in Z(S)$.

(2):  Suppose that $m,m'\in{}_SM_S$ are two elements with point-stabilizer $(S,\id)$.  By the last conclusion of part (1), we have $m,m'\in Z(S)\leq S$, and as $_SS_S$ is a transitive subbiset of $_SM_S$, the result follows.
\end{proof}

\section{Centric minimal characteristic bisets arising from linking systems}\label{sec:LinkingSystemsAsCharacteristicBisets}
In this section we describe the relationship between a centric linking system $\cL$ for a saturated fusion system  and the minimal $\cF$-characteristic biset.

For  $\cF$-centric subgroups $P,Q\leq S$, identify $N_S(P,Q)$ with its image in $\cL(P,Q)$.  The composite of $\fg\in\cL(P,Q)$ and $\fh\in\cL(Q,R)$ will be written $\fh\cdot\fg\in\cL(P,R)$.

We recall the extension result for morphisms of linking systems:

\begin{theorem}[\cite{OliverVenturaTransporterSystems}]\label{extension in linking systems}
Pick $\fg\in\cL_{\iso}(P,Q)$  and normal supergroups $P\trianglelefteq\widetilde P$, $Q\trianglelefteq\widetilde Q$.  If for every $\widetilde p\in\widetilde P$ we have $\fg\cdot\widetilde p\cdot\fg^{-1}\in\widetilde Q$, then $\fg$ has  a unique extension $\widetilde\fg\in\cL(\widetilde P,\widetilde Q)$.
\end{theorem}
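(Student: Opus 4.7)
The plan is to recognize this result as essentially an unpacking of the extension axiom built into the definition of a centric linking system (equivalently, the second of the three ``transporter system axioms'' of Oliver--Ventura), together with a straightforward translation of the hypothesis.

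First I would interpret the informal conjugation condition categorically. Since $P \trianglelefteq \widetilde{P}$, each $\widetilde{p} \in \widetilde{P}$ acts on $P$ by conjugation, so the structural inclusion identifies $\widetilde{p}$ with a morphism $c_{\widetilde{p}} \in \cL(P,P)$. The expression $\fg \cdot \widetilde{p} \cdot \fg^{-1}$ is then to be read as the composite $\fg \circ c_{\widetilde{p}} \circ \fg^{-1} \in \cL(Q,Q)$, and the hypothesis asserts that this composite lies in the image of $\widetilde{Q}$ under $\delta_{\widetilde{Q}}$. This assignment defines a map $\rho \colon \widetilde{P} \to \widetilde{Q}$. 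By functoriality of composition in $\cL$ and the fact that $\delta_{\widetilde{Q}}$ is a group homomorphism, $\rho$ is itself a group homomorphism. Moreover, the conjugation-compatibility axiom of the linking system says exactly that $\fg \circ \delta_P(p) = \delta_Q(\pi(\fg)(p)) \circ \fg$ for all $p \in P$, from which $\rho\big|_P = \pi(\fg)$.

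Second I would apply the extension axiom of a centric linking system: given $\fg \in \cL(P,Q)$ together with a compatible group homomorphism $\rho \colon \widetilde{P} \to \widetilde{Q}$ extending $\pi(\fg)$, there is a unique $\widetilde{\fg} \in \cL(\widetilde{P},\widetilde{Q})$ satisfying $\pi(\widetilde{\fg}) = \rho$ and $\widetilde{\fg} \circ \delta_{P,\widetilde{P}}(1) = \delta_{Q,\widetilde{Q}}(1) \circ \fg$. This is the desired $\widetilde{\fg}$, and it restricts to $\fg$ along the inclusion of $P$ into $\widetilde{P}$ as required.

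For uniqueness, any two extensions $\widetilde{\fg}_1, \widetilde{\fg}_2$ must project to the same fusion morphism, since $\rho$ is forced by the hypothesis. The fibers of $\pi$ are free torsors under the $Z(-)$-action via $\delta$, and the restriction condition to $\fg$ on the subgroup $P$ pins down the torsor action: the ratio $\widetilde{\fg}_2 \cdot \widetilde{\fg}_1^{-1}$ is given by an element $z \in Z(\widetilde{P})$ whose image in $Z(P)$ must be trivial, which together with the structure map forces $z = 1$. The only real obstacle in this outline is the notational bookkeeping between the three sibling structural inclusions $\delta_P$, $\delta_{\widetilde{P}}$, and $\delta_{P,\widetilde{P}}$ and the compositions living in different morphism sets; once this is handled carefully the proof is purely axiomatic.
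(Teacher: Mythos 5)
First, note that the paper does not prove this statement at all: it is recalled verbatim from Oliver--Ventura, so there is no in-house argument to compare yours against. Your identification of the result is the right one -- in the transporter-system formulation the existence of $\widetilde\fg$ is exactly Axiom (II), and uniqueness follows because the structural inclusions are categorical epimorphisms. Your uniqueness paragraph is essentially sound: if $\pi(\widetilde\fg_1)=\pi(\widetilde\fg_2)$ then $\widetilde\fg_2=\widetilde\fg_1\circ\delta_{\widetilde P}(z)$ with $z\in Z(\widetilde P)\leq C_S(P)=Z(P)$ (using that $P$ is $\cF$-centric), and restricting to $P$ gives $\delta_P(z)=\id$, hence $z=1$; and the common value of $\pi(\widetilde\fg_i)$ is indeed pinned down by Axiom (C) together with the injectivity of $\delta_{\widetilde Q}$.

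The gap is in the existence half. If a centric linking system is defined by the axioms (A), (B), (C) of Broto--Levi--Oliver -- the natural reading in this paper, which only assumes ``the axioms governing its structure'' -- then the ``extension axiom'' you invoke is not an axiom but is precisely the theorem to be proved, so your existence argument is circular. Closing it requires genuine work: (i) observe that $\widetilde P$ and $\widetilde Q$ are objects of $\cL$, since overgroups of $\cF$-centric subgroups are $\cF$-centric; (ii) show that your abstract homomorphism $\rho\colon\widetilde P\to\widetilde Q$ is actually a morphism of $\cF$, which uses the extension axiom of \emph{saturated fusion systems}: the hypothesis forces $\widetilde P\leq N_{\pi(\fg)}$, and $Q$, being centric, is fully centralized, so $\pi(\fg)$ extends in $\cF$ to $\widetilde P$, after which Proposition \ref{every morphism in the centric orbit category is epi reason} identifies that extension with $\rho$ up to a central twist; and (iii) lift the resulting $\cF$-morphism to $\cL(\widetilde P,\widetilde Q)$ via surjectivity of $\pi$ and correct the lift by an element of $Z(P)$, using Axioms (A) and (C), so that it restricts to $\fg$ on the nose. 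Alternatively, state up front that you adopt the Oliver--Ventura definition of a linking system as a transporter system with $E(P)=Z(P)$; then existence is axiomatic and only your uniqueness paragraph carries content -- but then the equivalence of that definition with the one used elsewhere is exactly what the citation is covering.
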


\begin{cor}\label{conditions for nonextendability in L}
Let $\fg\in\cL_\iso(P,Q)$ be an isomorphism of $\cL$.  The following are equivalent:
\begin{itemize}
\item[(a)] $\fg$ is nonextendable.
\item[(b)] $\left(\fg^{-1} \cdot N_S(Q)\cdot\fg\right)\cap N_S(P)=P$.
\item[(c)] $\left(\fg\cdot N_S(P)\cdot\fg^{-1}\right)\cap N_S(Q)=Q$.
\end{itemize}
\end{cor}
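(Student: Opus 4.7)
The plan is to reduce all three equivalences to direct applications of Theorem \ref{extension in linking systems}. Set $X := (\fg^{-1}\cdot N_S(Q)\cdot\fg)\cap N_S(P)$ and $Y := (\fg\cdot N_S(P)\cdot\fg^{-1})\cap N_S(Q)$. Heuristically, $X$ and $Y$ are the \emph{maximal} candidates for the source and target of a proper extension of $\fg$, so conditions (b) and (c) each say that no room for such an extension exists.

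First I would prove (b)$\Leftrightarrow$(c) by a symmetry argument. Because $\fg\in\cL_\iso(P,Q)$ is invertible, conjugation by $\fg$ (interpreted via the linking system structure) defines a bijection on the posets of subgroups of $N_S(P)$ and $N_S(Q)$. Unwinding the two conditions that define membership in $X$ and $Y$, one checks directly that $\fg\cdot X\cdot\fg^{-1} = Y$; since also $\fg\cdot P\cdot\fg^{-1}=Q$, the equalities $X=P$ and $Y=Q$ are equivalent.

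For (a)$\Leftrightarrow$(b), I would argue the contrapositive in both directions. If $X\gneq P$, I would take $\widetilde P:=X$ and $\widetilde Q:=\fg\cdot X\cdot\fg^{-1}=Y$. Then $P\trianglelefteq\widetilde P\leq N_S(P)$ and $Q\trianglelefteq\widetilde Q\leq N_S(Q)$, with normality following from $\fg\cdot P\cdot\fg^{-1}=Q$. By construction $\fg\cdot\widetilde p\cdot\fg^{-1}\in\widetilde Q$ for every $\widetilde p\in\widetilde P$, so Theorem \ref{extension in linking systems} yields a proper extension of $\fg$, contradicting (a). Conversely, if $\fg$ has a proper extension $\widetilde\fg\in\cL(\widetilde P,\widetilde Q)$ with $P\lneq\widetilde P$, then $\widetilde P\leq N_S(P)$ automatically, and $\widetilde\fg$ conjugates $\widetilde P$ into $\widetilde Q\leq N_S(Q)$; this places $\widetilde P$ inside $X$ and forces $X\gneq P$, violating (b).

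The main obstacle is making precise sense of the expression $\fg^{-1}\cdot N_S(Q)\cdot\fg$ inside $\cL$, which is not literally a group. This requires the linking system axioms identifying $N_S(-)$ with distinguished morphisms and allowing well-defined conjugation of such morphisms by isomorphisms of $\cL$, together with the $\cF$-centricity of $P$ and $Q$, so that an element of $\cL(P,P)$ (or $\cL(Q,Q)$) is determined by the automorphism it induces on $P$ (resp.\ $Q$). Once these conventions are nailed down, the rest of the argument is essentially bookkeeping driven by Theorem \ref{extension in linking systems}.
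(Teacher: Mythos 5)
Your approach is essentially the paper's: one direction of (a)$\Leftrightarrow$(b) is the observation that Theorem \ref{extension in linking systems} lets $\fg$ extend to the subgroup $X$ (so (a) forces $X=P$), and the equivalence with (c) comes from the symmetry exchanging $\fg$ and $\fg^{-1}$ --- the paper phrases this as ``(a)$\Leftrightarrow$(c) is (a)$\Leftrightarrow$(b) applied to $\fg^{-1}$,'' which is the same content as your identity $\fg\cdot X\cdot\fg^{-1}=Y$.

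One step is wrong as written. In the converse direction of (a)$\Leftrightarrow$(b) you assert that a proper extension $\widetilde\fg\in\cL(\widetilde P,\widetilde Q)$ of $\fg$ has $\widetilde P\leq N_S(P)$ ``automatically.'' It does not: $P$ need not be normalized by an arbitrary overgroup $\widetilde P$, so $\widetilde P$ need not sit inside $N_S(P)$ and hence need not land in $X$. The repair is standard and is exactly the paper's ``without loss of generality'' step: restrict $\widetilde\fg$ to $N_{\widetilde P}(P)$, which still properly contains $P$ because $\widetilde P$ is a finite $p$-group (normalizers grow), and then your argument --- the restricted extension conjugates elements of $N_{\widetilde P}(P)\leq N_S(P)$ into $N_S(Q)$, placing them in $X$ and forcing $X\gneq P$ --- goes through. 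A second, cosmetic point: in the forward direction it is slightly cleaner to take $\widetilde Q:=N_S(Q)$ rather than $Y$ when invoking Theorem \ref{extension in linking systems}, since then $Q\trianglelefteq\widetilde Q$ is immediate and you need not verify that $Y$ is a subgroup normalizing $Q$.
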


\begin{proof}
(a)$\Leftrightarrow$(b):  $\fg$ can always extend to $\left(\fg^{-1} \cdot N_S(Q)\cdot\fg\right)\cap N_S(P)$ by Theorem \ref{extension in linking systems}.  On the other hand, if $\fg$ is extendable, without loss of generality we may assume that $\fg$ extends to some $\widetilde\fg\in\cL_\iso(\widetilde P,\widetilde Q)$ with $P\trianglelefteq \widetilde P$.  Then for any $\widetilde p\in\widetilde P$, the diagram
\[
\xymatrix{
\widetilde P\ar[r]^-{\widetilde\fg}\ar[d]_{\widetilde p}&\widetilde Q\ar[d]^{c_{\widetilde\fg}(\widetilde p)}\\
\widetilde P\ar[r]_-{\widetilde\fg}&\widetilde Q
}
\]
commutes in $\cL$.  Here $c_{\widetilde\fg}\in\cF(\widetilde P,\widetilde Q)$ is the image of $\widetilde\fg$ in the underlying fusion system.  On restriction, this diagram becomes
\[
\xymatrix{
P\ar[r]^-\fg\ar[d]_-{\widetilde p}&Q\ar[d]^{c_{\widetilde\fg}(\widetilde p)}\\
P\ar[r]_-\fg&Q
}.
\]
Thus $\fg^{-1}\cdot c_{\widetilde\fg}(\widetilde p)\cdot \fg=\widetilde p\in \left(\fg^{-1} \cdot N_S(Q)\cdot\fg\right)\cap N_S(P)$, and the result follows.

(a)$\Leftrightarrow$(c):  If $\widetilde\fg$ is an extension of $\fg$, then $\widetilde\fg^{-1}$ is an extension of $\fg^{-1}$.  Thus the equivalence of (a) and (c) is the same as that of (a) and (b), with $\fg^{-1}$ in the role of $\fg$.
\end{proof}

One can use this result to prove that the equivalence relation on the set of isomorphisms of $\cL$ generated by restriction has a particularly nice structure.

\begin{theorem}[\cite{ChermakFusionSystemsAndLocalities}, Lemma A.8]\label{thm:unique extensions in linking system}
Let $\fg_1\in\cL_\iso(P_1,Q_1)$ and $\fg_2\in\cL_\iso(P_2,Q_2)$ be two isomorphisms that can be connected by a chain of extensions and restrictions.  Then there is an isomorphism $\fh$ with source containing $\langle P_1,P_2\rangle$ and target containing $\langle Q_1,Q_2\rangle$ such that the restriction of $\fh$ to $P_i$ is $\fg_i$, $i=1,2$.

In particular, each equivalence class of isomorphisms of $\cL$ contains a unique maximal element $\fk$, in the sense that every element of that class is a restriction of $\fk$.  This unique maximal element is of necessity nonextendable, and each nonextendable isomorphism appears as the maximal element of a different class.
\end{theorem}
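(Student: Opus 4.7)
The plan is to reduce to a single ``amalgamation lemma'' and then induct on the length of the chain connecting $\fg_1$ and $\fg_2$. The amalgamation lemma I will aim for asserts that if $\fh_1 \in \cL_\iso(P_1, Q_1)$ and $\fh_2 \in \cL_\iso(P_2, Q_2)$ are both extensions of a common $\fg \in \cL_\iso(P, Q)$, then there exists $\fk \in \cL_\iso(\langle P_1, P_2\rangle, \langle Q_1, Q_2\rangle)$ with $\fk|_{P_i} = \fh_i$. Granted the lemma, the theorem follows by induction on the length of a chain $\fg_1 = \fa_0, \fa_1, \ldots, \fa_n = \fg_2$: letting $\fh'$ be an inductively produced common extension of $\fg_1$ and $\fa_{n-1}$, either $\fa_{n-1}$ restricts to $\fg_2$ (whence $\fh'$ already dominates $\fg_2$) or $\fa_{n-1}$ extends to $\fg_2$ (whence the amalgamation lemma applied to $\fh'$ and $\fg_2$ over $\fa_{n-1}$ produces the desired $\fh$). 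The up/down/up kinks in the chain---which would otherwise be a headache---are absorbed entirely by the lemma.

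The proof of the amalgamation lemma is a direct application of Theorem \ref{extension in linking systems}. Normality of $P$ in each $P_i$ and of $Q$ in each $Q_i$ is implicit in the existence of $\fh_i$, so $P \trianglelefteq \widetilde P := \langle P_1, P_2\rangle$ and $Q \trianglelefteq \widetilde Q := \langle Q_1, Q_2\rangle$. The nontrivial input required is that $\fg \cdot c_{\widetilde p} \cdot \fg^{-1} \in \widetilde Q$ for every $\widetilde p \in \widetilde P$, where elements of $N_S(P)$ are viewed as morphisms in $\cL(P, P)$. This need only be checked on generators: for $\widetilde p \in P_i$, the relation $\fh_i \cdot \widetilde p = \fh_i(\widetilde p) \cdot \fh_i$ in $\cL(P_i, Q_i)$ restricts along $P \trianglelefteq P_i$, and using $\fh_i|_P = \fg$ gives $\fg \cdot c_{\widetilde p} \cdot \fg^{-1} = c_{\fh_i(\widetilde p)}$ in $\cL(Q, Q)$, an element lying in $Q_i \subseteq \widetilde Q$. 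Closure under products is automatic. Theorem \ref{extension in linking systems} then delivers a unique $\fk \in \cL(\widetilde P, \widetilde Q)$ extending $\fg$; uniqueness of extensions at each intermediate subgroup forces $\fk|_{P_i} = \fh_i$, and surjectivity of $\fk$ onto $\widetilde Q$, which places $\fk$ in $\cL_\iso$, follows because the image contains $Q_1 \cup Q_2$.

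For the ``in particular'' statement, the first half of the theorem endows each equivalence class of isomorphisms with the structure of a directed set under the restriction partial order: any two elements admit a common upper bound. Since the subgroups of $S$ form a finite poset and there are only finitely many morphisms between any two objects of $\cL$, a maximum $\fk$ exists and is unique in each class. Every element of the class is a restriction of $\fk$ by construction, and $\fk$ must be nonextendable (an extension would lie in the same class and properly exceed it). Conversely any nonextendable isomorphism is trivially the maximum of its own class, which gives the claimed bijection between classes and nonextendable isomorphisms.

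I expect the only genuine technical obstacle in the argument to lie in correctly handling the restriction identity $\fg \cdot c_{\widetilde p} \cdot \fg^{-1} = c_{\fh_i(\widetilde p)}$ inside $\cL(Q, Q)$, which requires some care in tracking how the restriction functors $\cL(P_i, Q_i) \to \cL(P, Q_i)$ interact with the embeddings $N_S(P_i) \hookrightarrow \cL(P_i, P_i)$. Once this compatibility is nailed down, the remaining verifications are essentially mechanical applications of the extension result.
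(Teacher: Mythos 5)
The paper does not actually prove this statement---it is quoted from Chermak's Lemma A.8---so there is no in-paper proof to compare against; I am judging your argument on its own. Your global architecture (induct on the length of the chain, absorbing all the up--down kinks into an amalgamation lemma for two extensions of a common restriction, then the finite-directed-poset argument for the ``in particular'' clause) is sound and is essentially the right way to organize the proof. The gap is in the proof of the amalgamation lemma itself, at the sentence ``Normality of $P$ in each $P_i$ and of $Q$ in each $Q_i$ is implicit in the existence of $\fh_i$.'' This is false: a morphism of $\cL$ restricts to \emph{any} subgroup of its source, normal or not (e.g.\ $\id_S$ extends $\id_P$ for every $P\leq S$). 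Without $P\trianglelefteq\widetilde P$ the hypothesis of Theorem \ref{extension in linking systems} does not even parse---the expression $\fg\cdot\widetilde p\cdot\fg^{-1}$ requires $\widetilde p\in N_S(P)$ so that $\widetilde p$ represents an element of $\cL(P,P)$---so the extension theorem cannot be invoked directly on $\widetilde P=\langle P_1,P_2\rangle$. The same problem infects your appeal to ``uniqueness of extensions at each intermediate subgroup,'' which again is only supplied by Theorem \ref{extension in linking systems} when $P$ is normal in that intermediate subgroup.

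The missing content is exactly the normalizer-climbing induction: first restrict $\fh_i$ to $N_{P_i}(P)$, amalgamate over $\langle N_{P_1}(P),N_{P_2}(P)\rangle$ (where $P$ \emph{is} normal, so your axiom-(C) computation applies verbatim to the generators, and closure under products holds because $\{\widetilde p\mid \fg\widetilde p\fg^{-1}\in\widetilde Q\}$ is a subgroup), and then induct on an index such as $[\langle P_1,P_2\rangle:P]$, using that $P\lneq P_i$ forces $P\lneq N_{P_i}(P)$ in a $p$-group. This is not a cosmetic fix---it is the main technical work of the lemma, and it is precisely the argument the paper carries out in the fusion-system analogue, steps (2) and (3) of the proof of Corollary \ref{extensions in centric fusion system are unique up to conjugation by central elements}. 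The ``in particular'' paragraph is fine once the first half is established.
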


\begin{notation}
Let $\fI$ denote the set of nonextendable isomorphisms of $\cL$. By Theorem \ref{thm:unique extensions in linking system} every morphism of $\cL$ is then the restriction of a unique isomorphism in $\fI$.

($\fI$ is in fact the underlying set of Chermak's partial group version of a linking system.)
\end{notation}

\begin{lemma}
The set $\fI$ carries a natural $(S,S)$-biset structure.
\end{lemma}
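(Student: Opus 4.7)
The plan is to define the $S$-actions by pre- and post-composition with the distinguished morphisms of $\cL$ coming from the identification $N_S(P,Q) \hookrightarrow \cL(P,Q)$, and then verify the biset axioms. Specifically, for $\fg \in \cL_\iso(P,Q) \cap \fI$ and $a,b \in S$, I will set
\[
b \cdot \fg \cdot a \;:=\; \hat b \cdot \fg \cdot \hat a \;\in\; \cL_\iso(a^{-1}Pa,\, bQb^{-1}),
\]
where $\hat a$ and $\hat b$ denote the images of $a \in N_S(a^{-1}Pa,P)$ and $b \in N_S(Q, bQb^{-1})$ under the identification. These composites are well-defined morphisms of $\cL$ because $\cF$-centricity is preserved by $\cF$-conjugation, so $a^{-1}Pa$ and $bQb^{-1}$ are automatically objects of $\cL$.

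I expect the main obstacle to be showing that this operation preserves $\fI$, i.e., that $\hat b \cdot \fg \cdot \hat a$ is still nonextendable. The argument I have in mind is by contradiction: suppose $\hat b \cdot \fg \cdot \hat a$ admits a proper extension $\widetilde \fh \in \cL_\iso(\widetilde P, \widetilde Q)$ with $a^{-1}Pa \trianglelefteq \widetilde P$ and $bQb^{-1} \trianglelefteq \widetilde Q$. The distinguished morphisms $\widehat{a^{-1}} \in \cL(a \widetilde P a^{-1}, \widetilde P)$ and $\widehat{b^{-1}} \in \cL(\widetilde Q, b^{-1}\widetilde Q b)$ are defined by $S$-conjugacy of the source/target groups, so I may form
\[
\widehat{b^{-1}} \cdot \widetilde \fh \cdot \widehat{a^{-1}} \;\in\; \cL_\iso\bigl(a\widetilde P a^{-1},\, b^{-1}\widetilde Q b\bigr).
\]
Restricting to $P \leq a\widetilde P a^{-1}$ and using the multiplicativity identities $\widehat{b^{-1}}\cdot \hat b = \id_Q$ and $\hat a \cdot \widehat{a^{-1}} = \id_P$, this restriction equals $\fg$. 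Thus $\widehat{b^{-1}} \cdot \widetilde \fh \cdot \widehat{a^{-1}}$ is an extension of $\fg$, and nonextendability of $\fg$ forces $\widetilde P = a^{-1}Pa$ and $\widetilde Q = bQb^{-1}$, so the original extension was trivial.

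The remaining axioms are routine. Associativity on each side follows from the multiplicativity $\widehat{a_1 a_2} = \hat{a_1} \cdot \hat{a_2}$ and associativity of composition in $\cL$; the left and right actions commute by that same associativity. For freeness, if $b \cdot \fg = \fg$ for some $b \in S$, then matching the targets forces $bQb^{-1} = Q$, so $b \in N_S(Q)$; cancelling the isomorphism $\fg$ yields $\hat b = \id_Q \in \Aut_\cL(Q)$, and injectivity of $N_S(Q) \hookrightarrow \cL(Q,Q)$ gives $b = 1$. Right-freeness follows by the symmetric argument.
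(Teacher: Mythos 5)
Your proposal is correct and takes essentially the same approach as the paper: both define the action by composing with the distinguished morphisms $\hat a,\hat b$ of $\cL$, and the only substantive point is that the translate remains nonextendable. The paper checks this via the normalizer criterion of Corollary \ref{conditions for nonextendability in L}, while you conjugate a hypothetical proper extension back to a proper extension of $\fg$ itself; these amount to the same thing, and your extra verifications of associativity and freeness (the latter using injectivity of $N_S(Q)\hookrightarrow\Aut_\cL(Q)$, valid in a centric linking system) are fine.
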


\begin{proof}
Pick $P\xra{\fg} Q\in\fI$ and $a,b\in S$.  Define $a\cdot\fg\cdot b\in\cL(\lc{b^{-1}}P,\lc aQ)$ to be the composite
\[
\xymatrix{
\lc{b^{-1}}P\ar[r]^-b&P\ar[r]^-\fg& Q\ar[r]^-a&\lc aQ.
}
\]
Pick some $n\in N_S(\lc aQ)$ such that $(a\fg b)^{-1}\cdot n\cdot (a\fg b)\in N_S(\lc{b^{-1}}P)$, then $\fg^{-1}(a^{-1} n a)\fg\in N_S(P)$.  As $\fg\in\fI$ is nonextendable, Corollary \ref{conditions for nonextendability in L} forces $a^{-1}na\in Q$, so $n\in\lc aQ$ and $a\cdot\fg\cdot b$ is nonextendable.
\end{proof}

It is not the case that $_S\fI_S$ is an $\cF$-characteristic set, as the example of $\cF_{D_8}(A_6)$ demonstrates. The main failing is that the elements of $\fI$, being morphisms in $\cL$, only see the $\cF$-centric subgroups.

\begin{example}
Inside $\cF:=\cF_{D_8}(A_6)$, the Sylow $2$-subgroup $D_8$ has the following subgroup diagram:
\[
\begin{tikzpicture}[node distance=1.3cm]
\begin{scope}[circle, inner sep=.1cm]
\node[draw] (D8) {$D_8$};
\node[draw] (C4) [below of=D8] {$C_4$};
\node[draw] (V) [left of=C4] {$V$};
\node[draw] (V') [right of=C4] {$V'$};
\node (Z) [below of=C4] {$Z$};
\node (Q1') [left of=Z] {$Q_1'$};
\node (Q1) [left of=Q1'] {$Q_1$};
\node (Q2) [right of=Z] {$Q_2'$};
\node (Q2') [right of=Q2] {$Q_2$};
\node (1) [below of=Z] {$1$};
\end{scope}
\draw[arrow]
    (D8) edge (V)
        edge (V')
        edge (C4)
    (V) edge (Q1)
        edge (Q1')
        edge (Z)
    (V') edge (Q2)
        edge (Q2')
        edge (Z)
    (C4) edge (Z)
    (1) edge (Q1)
        edge (Q1')
        edge (Q2)
        edge (Q2')
        edge (Z)
        ;
\path
    (Q1) -- node{$\sim$} (Q1')
    (Q2) -- node{$\sim$} (Q2')
    ;
\draw[dashed,auto,swap]
    (Q1') -- node{$\cF$} (Z)
    (Z) -- node{$\cF$} (Q2)
    ;
\end{tikzpicture}
\]
Each sign $\sim$ in the diagram indicates that the two subgroups are conjugate in $D_8$, and each \begin{tikzpicture}[baseline=-.1cm]\draw[dashed,auto,swap] (0,0) -- node{$\cF$} (.5,0);\end{tikzpicture} indicates that the subgroups are conjugate in $\cF$ but not in $D_8$. Finally, the circles indicate the $\cF$-centric subgroups of $D_8$.

The fusion system $\cF$ is generated by an outer automorphism $\alpha\colon V\to V$ sending $Q_1$ to $Z$ and an outer automorphism $\beta\colon V'\to V'$ sending $Q_2$ to $Z$. Let $\cL$ be the centric linking system for $\cF$. The $\cL$-automorphisms of $D_8$ are the elements of $D_8$ itself, and these form a single $(D_8,D_8)$-orbit of type $[D_8,id]$. All $\cL$-automorphisms of $C_4$ extend to $D_8$, hence they do not contribute to the biset $_S \fI_S$. Of the $24$ $\cL$-automorphisms of $V$ only $8$ of them extend to $D_8$; the remaining $16$ form a single $(D_8,D_8)$-orbit of type $[V,\alpha]$. Similarly the nonextendable $\cL$-automorphisms of $V'$ produce a biset orbit $[V',\beta]$.

The entire biset $_S \fI_S$ of nonextendable $\cL$-isomorphisms is thus isomorphic to 
\[_S \fI_S \cong [D_8,id] + [V,\alpha] + [V',\beta].\]
This however is not all of the characteristic biset for $\cF$. $\Lambda_\cF$ receives two additional orbits from the non-$\cF$-centric subgroups:
\[\Lambda_\cF = [D_8,id] + [V,\alpha] + [V',\beta] + [Q_1,\beta^{-1}\alpha] + [Q_2,\alpha^{-1}\beta].\]
Note that that $\beta^{-1}\alpha:Q_1\to Q_2$ is nonextendable, as is its inverse $\alpha^{-1}\beta:Q_2\to Q_1$, so each must be represented as a point-stabilizer in $\Lambda_\cF$.
\end{example}

\begin{definition}
An \emph{$\cF$-centric semicharacteristic biset} is an $\cF$-generated $(S,S)$-biset $\Omega$ with all point-stabilizers of the form $(P,\varphi)$ with $P$ an $\cF$-centric subgroup, and such that for all $\cF$-centric subgroups $P$ and $\varphi\in\cF(P,S)$, $\abs[\big]{\Omega^{(P,\varphi)}}=\abs[\big]{\Omega^{(P,\iota_P^S)}}=\abs[\big]{\Omega^{(\varphi P,\varphi^{-1})}}$.  If we also have $\abs{\Omega}/\abs S\not\equiv 0$ mod $p$, we say that $\Omega$ is a \emph{$\cF$-centric characteristic biset}.
\end{definition}

\begin{remark}\label{rmk:truncating general char gives centric char}
Each $\cF$-centric semicharacteristic biset $\Omega$ is by assumption $\cF$-stable on all the $\cF$-centric subgroups of $S$. By adding additional orbits $[Q,\psi]$ with $Q$ non-centric, as in the construction of Theorem \ref{F-basis}, we can construct a $\cF$-semicharacteristic biset from $\Omega$. Conversely, any semicharacteristic biset for $\cF$ can be truncated, by removing all orbits $[Q,\psi]$ with $Q$ non-centric, to give a $\cF$-centric semicharacteristic biset.

This provides a $1$-to-$1$ correspondence between the centric (semi)characteristic bisets for $\cF$ and those (semi)characteristic bisets of the form $\sum\limits_{\text{$\cF$-centric $(P)_\cF$}} c_P\cdot \Omega_P$ with $c_P\in \ZZ_{\geq 0}$.
\end{remark}

\begin{theorem}\label{thm:LinkingSystemsAreCentricPartOfMinimalCharacteristicBiset}
$_S\fI_S$ is an $\cF$-centric characteristic biset.  Moreover, it is the unique minimal $\cF$-centric characteristic biset for $\cF$, and thus is the $\cF$-centric part of the minimal characteristic biset for $\cF$.
\end{theorem}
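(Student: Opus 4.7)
The plan is to verify the three characteristic-biset axioms for $_S\fI_S$ and then establish minimality via an $\cF$-centric parallel of Theorem \ref{parameterization of semicharacteristic bisets}. I first compute the stabilizer of $\fg \in \fI$ with $\fg\colon P\to Q$. A pair $(y,x)\in S\x S$ stabilizes $\fg$ iff $y\fg x^{-1}=\fg$ in $\cL$; matching sources and targets forces $x\in N_S(P)$ and $y\in N_S(Q)$, and the equality rewrites as $\fg^{-1} y \fg = x$ in $\Aut_\cL(P)$. Nonextendability via Corollary \ref{conditions for nonextendability in L}(b) forces $x\in P$, and then the linking system axiom $\fg\cdot x=\bar\fg(x)\cdot\fg$ (together with $\fg$ being an isomorphism) yields $y=\bar\fg(x)$. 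Hence $\Stab_{S\x S}(\fg)=(P,\bar\fg)$, an $\cF$-twisted diagonal on the $\cF$-centric subgroup $P$, so $_S\fI_S$ is $\cF$-generated with $\cF$-centric support.

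For $\cF$-centric stability, fix an $\cF$-centric $P$ and $\varphi\in\cF(P,S)$ and send $\fg\in\fI^{(P,\varphi)}$ to its $\cL$-restriction $\fg|_P$. By the stabilizer computation, this lands in the $\varphi$-fiber $\{\psi\in\cL(P,S)\mid\bar\psi=\varphi\}$. Theorem \ref{thm:unique extensions in linking system} yields both injectivity (any restriction determines its unique maximal nonextendable extension) and surjectivity (every $\psi$ arises as such a restriction), so the map is a bijection. The linking system axioms give $\abs{Z(P)}$ for the fiber size, independent of $\varphi$, establishing $\cF$-centric stability. For the Sylow condition, decompose $\fI$ into $(S,S)$-orbits $[P_\fg,\bar\fg]$; orbits with $P_\fg<S$ contribute multiples of $p$ to $\abs\fI/\abs S$, while orbits with $P_\fg=S$ are the $(S,S)$-double cosets of $\Aut_\cL(S)$ and number $\abs{\Out_\cF(S)}$, which is coprime to $p$ by saturation ($\Inn(S)$ is Sylow in $\Aut_\cF(S)$).

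For minimality, the strategy is to prove the $\cF$-centric analog of Theorem \ref{parameterization of semicharacteristic bisets}: the monoid of $\cF$-centric semicharacteristic bisets should be free on basis elements $\Omega^c_P$ indexed by $\cF$-centric classes $(P)_\cF$, each characterized (in the style of Corollary \ref{cor:minimal Fsets are unique}) by containing exactly one copy of $[P,\iota_P^S]$ at a fully normalized representative and no other fully normalized characteristic orbit. As in Theorem \ref{parameterization of semicharacteristic bisets}, an $\cF$-centric characteristic biset then requires $c_S\not\equiv 0\pmod p$, so $\Omega^c_S$ is the unique minimal $\cF$-centric characteristic biset; the truncation/augmentation correspondence of Remark \ref{rmk:truncating general char gives centric char} identifies it with the $\cF$-centric part of $\Lambda_\cF$.

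It then remains to pin down the basis expansion of $\fI$. For fully normalized $\cF$-centric $P$, the coefficient $c_P$ equals the number of $(S,S)$-orbits of type $[P,\iota_P^S]$ in $\fI$, which by the stabilizer computation count the $\fg\in\fI$ with source $P$ and $\bar\fg=\id_P$, up to the $(S,S)$-action. Such $\fg$ lie in the kernel of $\Aut_\cL(P)\to\Aut_\cF(P)$ and so correspond to elements $z\in Z(P)=C_S(P)\leq P$, using $\cF$-centricity. When $P<S$, we have $P<N_S(P)$ and $z\in P$ satisfies the hypothesis of Theorem \ref{extension in linking systems} for the pair $P\trianglelefteq N_S(P)$, so $z$ extends to an element of $\Aut_\cL(N_S(P))$, contradicting nonextendability; hence $c_P=0$. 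When $P=S$, every $z\in Z(S)$ lies in the single $(S,S)$-orbit through $\id_S$, so $c_S=1$. Therefore $\fI=\Omega^c_S$, which is the minimal $\cF$-centric characteristic biset and equivalently the $\cF$-centric part of $\Lambda_\cF$. The chief obstacle will be establishing the $\cF$-centric analog of Theorem \ref{parameterization of semicharacteristic bisets}, but the inductive arguments in Sections \ref{sec:monoid of F-sets} and \ref{sec:parameterization of semicharacteristic bisets} should adapt essentially verbatim once restricted to $\cF$-generated bisets supported on $\cF$-centric subgroups.
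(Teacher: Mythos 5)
Your argument is correct and follows essentially the same route as the paper's proof: the same stabilizer computation via Corollary \ref{conditions for nonextendability in L}, the same bijection between $(P,\varphi)$-fixed points and the $\abs{Z(P)}$ lifts of $\varphi$ in $\cL$, and minimality from having exactly one orbit of type $[S,\id_S]$ and none of type $[P,\iota_P^S]$ for $P<S$. The ``chief obstacle'' you flag at the end is not one: the $\cF$-centric analogue of the parameterization of Theorem \ref{parameterization of semicharacteristic bisets} is already supplied by Remark \ref{rmk:truncating general char gives centric char}.
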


\begin{proof}
Suppose that $(R,\chi)$ is the stabilizer of $P\xra\fg Q\in\fI$, so that $\chi(r)\cdot\fg\cdot r^{-1}=\fg$ for all $r\in R$.  The definition of the $(S,S)$-action forces  $R\leq N_S(P)$ and $\chi (R)\leq N_S(Q)$.  $\fg$ is nonextendable, so Corollary \ref{conditions for nonextendability in L} implies  $R\leq P$ and $\chi=c_\fg\big|_R$.  As $(P,c_\fg)$ fixes $\fg$, it follows that $(R,\chi)=(P,c_\fg)$, so  every point-stabilizer of $_S\fI_S$ is a $\cF$-twisted diagonal subgroup whose source  is $\cF$-centric.

We now demonstrate $\cF$-stability on the $\cF$-centrics.  Let $P$ be an $\cF$-centric subgroup and $(P,\varphi)$ an $\cF$-twisted diagonal subgroup; we claim $\abs*{({}_S\fI_S)^{(P,\varphi)}}=\abs{Z(P)}$.  If $A\xra\fh B\in({}_S\fI_S)^{(P,\varphi)}$, then $\ph(p)\cdot \fh\cdot p^{-1}=\fh$ for all $p\in P$, so the above argument  gives  $P\leq A$ and $\varphi=c_\fh\big|_P$.  In other words, there is a natural bijection between the fixed points of $(P,\varphi)$ and the elements of $\fI$ that restrict to $\varphi$.  As every morphism of $\cL$ is epi and mono, an element of $\fI$ is uniquely determined by its restriction and conversely, so the number of $(P,\varphi)$-fixed points is the number of isomorphisms in $\cL$ with source $P$ that project to $\varphi$ in $\cF$.  By the linking system axioms there are  $\abs{Z(P)}$ such isomorphisms, proving the claim.

Finally, we show that $_S\fI_S$ is minimal.  If $\Stab_{S\x S}(\fg)=(P,\iota_P^S)$, we must have $c_\fg=\id_P$, which is only nonextendable when $P=S$.  Thus if $(P,\iota_P^S)$ is a stabilizer, we must have $P=S$.  Finally, as $\abs*{[S,\id]^{(S,\id)}}=\abs{Z(S)}=\abs*{({}_S\fI_S)^{(S,\id)}}$, we conclude that there is exactly one orbit with stabilizer $(S,\id)$, and we are done.
\end{proof}

\section{The linking-system-free centric minimal characteristic biset}\label{sec:LinkingSystemFreeMinimalBisets}

In this section we determine the minimal $\cF$-centric characteristic biset for a saturated fusion system $\cF$ in purely fusion-theoretic terms without assuming the existence of a linking system for $\cF$. The key for the argument is Puig's result, here recorded as Proposition \ref{every morphism in the centric orbit category is epi reason} and Corollary \ref{extensions in centric fusion system are unique up to conjugation by central elements}, describing the degree to which a morphism between $\cF$-centric subgroups has unique extensions.

\begin{remark}
Fix  $\varphi\in\cF_\iso(P,Q)$.  For $a,b\in S$, set $\psi:=c_a\circ\varphi\circ c_b\in\cF_\iso(\lc{b^{-1}}P,\lc aQ)$.  If $\lc{b^{-1}}P\leq \lc{b^{-1}}{\widetilde P}$ and $\widetilde\psi\in\cF(\lc{b^{-1}}{\widetilde P,S)}$ extends $\psi$, then $c_a^{-1}\circ\widetilde\psi\circ c_b^{-1}\in\cF(\widetilde P,S)$ extends $\varphi$.  Thus $\varphi$ is nonextendable if and only of $c_a\circ\varphi\circ c_b$ is nonextendable for all $a,b\in S$.
\end{remark}

\begin{notation}
Let $\sI$ be a set of representatives of the equivalence classes of nonextendable $\cF$-isomorphisms between $\cF$-centric subgroups of $S$, where $\varphi\sim\varphi'$ if there exist $a,b\in S$ such that $\varphi'=c_a\circ\varphi\circ c_b$.
\end{notation}


\begin{prop}{\cite[Proposition 3.3]{PuigFrobeniusCategories}}\label{every morphism in the centric orbit category is epi reason}
Let $P\leq Q\leq S$ be two $\cF$-centric subgroups.  If $\psi_1,\psi_2\colon Q\to S$ are such that $\psi_1|_P=\psi_2|_P$, then there is some $z\in Z(P)$ such that $\psi_2= \psi_1\circ c_z|_Q\in\cF(Q,S)$.
\end{prop}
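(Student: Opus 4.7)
My plan is to reduce the proposition to a statement about a single $\cF$-morphism fixing the $\cF$-centric subgroup $P'$ pointwise, and then to argue by induction on $[Q':P']$, with the key technical input being the strong control that $\cF$-centricity exerts over centralizers in $S$.

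Set $\varphi:=\psi_1|_P=\psi_2|_P$, $P':=\varphi(P)$, and $Q':=\psi_1(Q)$. Both $P'$ and $Q'$ are $\cF$-centric, since $\cF$-centricity is preserved both by $\cF$-conjugation and by passage to overgroups. Forming $\alpha := \psi_2\circ\psi_1^{-1}\colon Q'\to S$, it suffices to prove the following: \emph{if $P'\leq Q'\leq S$ are $\cF$-centric and $\alpha\colon Q'\to S$ is an $\cF$-morphism with $\alpha|_{P'}=\iota_{P'}^S$, then $\alpha = c_{z'}|_{Q'}$ for some $z'\in Z(P')$.} Indeed, taking $z:=\psi_1^{-1}(z')\in Z(P)$ then yields $\psi_2 = \psi_1\circ c_z|_Q$.

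The crucial structural fact is that $\cF$-centricity of $P'$ forces $C_S(P')=Z(P')$. For any $q\in N_{Q'}(P')$, using $qp'q^{-1}\in P'$ and $\alpha|_{P'}=\id$,
\[ \alpha(q)\cdot p' \cdot \alpha(q)^{-1} \;=\; \alpha(qp'q^{-1}) \;=\; qp'q^{-1} \quad\text{for all } p'\in P',\]
so $q^{-1}\alpha(q)\in C_S(P')=Z(P')$ and hence $\alpha(q)\in q\cdot Z(P')$. I induct on $[Q':P']$. The base case $Q'=P'$ is immediate with $z'=1$. For the inductive step, choose $P'\lneq R\leq N_{Q'}(P')$ with $[R:P']=p$, which exists because $P'\lneq Q'$ forces $P'\lneq N_{Q'}(P')$ in the $p$-group $Q'$. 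Note that $R$ is itself $\cF$-centric as an overgroup of $P'$. The heart of the argument is the claim that some $z'_R\in Z(P')$ makes $c_{z'_R}^{-1}\circ\alpha$ fix $R$ pointwise; granted this, the inductive hypothesis applied to the $\cF$-centric pair $R\leq Q'$ produces $z'_Q\in Z(R)\leq Z(P')$ with $c_{z'_Q}^{-1}\circ c_{z'_R}^{-1}\circ\alpha = \iota_{Q'}^S$. Because $Z(P')$ is abelian, $z':=z'_R\, z'_Q\in Z(P')$ then satisfies $\alpha=c_{z'}|_{Q'}$, closing the induction.

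The main obstacle is producing $z'_R$. After using Lemma~\ref{lemNormalizerMap} to replace $P'$ by a fully $\cF$-normalized conjugate (and transporting $R$ accordingly, preserving centricity), saturation axiom (II) supplies sufficiently many $\cF$-extensions of automorphisms of $P'$ along $N_S(P')$ to force the discrepancy map $r\mapsto r^{-1}\alpha(r)$ on $R$—which takes values in $Z(P')$ and satisfies a $1$-cocycle identity for the action of $R/P'$ inherited from conjugation—to be realized as a coboundary by conjugation by some $z'_R\in Z(P')$. This cocycle-to-coboundary step is the delicate piece of the argument, and it is precisely where the combination of the $\cF$-centric hypothesis with the full strength of the saturation axioms is essential.
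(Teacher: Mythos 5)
The proposition is not proved in the paper at all --- it is quoted from Puig, with only a remark translating his quasicentric formulation into the centric one --- so there is no in-house argument to compare yours against; what matters is whether your attempt stands on its own. Your reduction is correct: passing to $\alpha=\psi_2\circ\psi_1^{-1}\in\cF(Q',S)$ with $\alpha|_{P'}=\id_{P'}$, observing that centricity of $P'$ gives $q^{-1}\alpha(q)\in C_S(P')=Z(P')$ for $q\in N_{Q'}(P')$, and setting up the induction on $[Q':P']$ via a subgroup $R$ with $[R:P']=p$ are all fine (as is $Z(R)\leq C_S(R)\leq C_S(P')=Z(P')$ in the closing step).

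The problem is that the entire content of the proposition sits in the step you defer: producing $z'_R\in Z(P')$ with $c_{z'_R}^{-1}\circ\alpha$ trivial on $R$, i.e.\ showing that the discrepancy cocycle $\theta(r)=r^{-1}\alpha(r)$ is a coboundary. This cannot follow from the cocycle identity alone: $H^1(R/P';Z(P'))\cong H^1(\ZZ/p;Z(P'))$ is typically nonzero (already for the trivial action on $\ZZ/p$), and indeed \emph{every} $\theta\in Z^1(R/P';Z(P'))$ defines an abstract automorphism $r\mapsto r\theta(r)$ of $R$ fixing $P'$ pointwise. So the whole issue is to show that the only such automorphisms lying in $\Aut_\cF(R)$ are the coboundaries, and your appeal to ``saturation axiom (II) supplies sufficiently many extensions \dots to force the discrepancy map to be realized as a coboundary'' names no specific morphism being extended and no mechanism by which the extension kills the cohomology class. (Note that a bare Sylow argument also fails: the relevant kernel $K=\{\gamma\in\Aut_\cF(R):\gamma|_{P'}=\id\}$ is a $p$-group, but for $R\lneq S$ the Sylow subgroup $\Aut_S(R)$ of $\Aut_\cF(R)$ is not normal, so one cannot conclude $K\leq\Aut_S(R)$ without further work; the case $R=S$, where $\Inn(S)$ is a normal Sylow subgroup of $\Aut_\cF(S)$, is the only one that comes for free.) As written, the proposal assumes precisely Puig's Proposition 3.3 at its critical juncture, so it is an outline of a plausible strategy rather than a proof; to complete it you would need to carry out the extension argument explicitly, for instance by extending $\alpha|_R\in\Aut_\cF(R)$ step by step up to $S$ using axiom (II) (legitimate since $R$, being $\cF$-centric, is fully centralized) and verifying at each stage that the extender $N_{\alpha|_R}$ strictly grows.
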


\begin{remark}
In fact, Puig's formulation deals with $\cF$-quasicentric subgroups (``nilcentralized'' in his terminology), a more general class of subgroups than the $\cF$-centrics.  The original statements is:  If $P\leq Q\leq S$ are $\cF$-quasicentric subgroups with $\psi_1,\psi_2\in\cF(Q,S)$ such that $\psi_1|_P=\psi_2|_P=:\varphi\in\cF(P,S)$ and $\varphi P$ is fully $\cF$-centralized, then there is some $z\in C_S(\varphi P)$ such that $\psi_2=c_z\circ\psi_1\in\cF(Q,S)$.  In the case that $P$ is $\cF$-centric, we have $C_S(\varphi P)=Z(\varphi P)$.  Thus $z=\varphi(z')=\psi_1(z')$ for some $z'\in Z(P)$, and $c_z\circ\psi_1=\psi_1\circ c_{z'}$, and we recover the above formulation.
\end{remark}

\begin{cor}\label{extensions in centric fusion system are unique up to conjugation by central elements}
If $P\leq S$ is $\cF$-centric then each $\varphi\in\cF(P,S)$ has a unique nonextendable extension, up to precomposition with conjugation by elements of $Z(P)$.  In other words, if $\psi_1\in\cF(Q_1,S)$ and $\psi_2\in\cF(Q_2,S)$ are both nonextendable extensions of $\varphi$, then $Q_1=Q_2$ and there is some $z\in Z(P)$ such that $\psi_2=\psi_1\circ c_z$.
\end{cor}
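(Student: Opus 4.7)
My approach is to apply Puig's Proposition \ref{every morphism in the centric orbit category is epi reason} twice, with the heart of the proof lying in showing that the domains of two nonextendable extensions must coincide.

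Set $Q_0:=Q_1\cap Q_2$. Since $P\leq Q_0$ and $\cF$-centricity is upward-closed among subgroups of $S$, $Q_0$ is $\cF$-centric. The restrictions $\psi_1|_{Q_0},\psi_2|_{Q_0}\in\cF(Q_0,S)$ agree on $P$, so Puig's Proposition produces some $z\in Z(P)$ with $\psi_2|_{Q_0}=\psi_1|_{Q_0}\circ c_z|_{Q_0}$. Since $z\in Z(P)\leq P\leq Q_1$, $c_z$ is an inner automorphism of $Q_1$ (and indeed of any subgroup of $S$ containing $z$), so $\psi_1\circ c_z\in\cF(Q_1,S)$ is another morphism extending $\varphi$ (using $c_z|_P=\id_P$ when $z\in Z(P)$), and it remains nonextendable: any strict extension of $\psi_1\circ c_z$ could be post-composed with the inverse inner automorphism on the larger domain to give a strict extension of $\psi_1$. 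Replacing $\psi_1$ by this modification, we may assume $\psi_1|_{Q_0}=\psi_2|_{Q_0}$.

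The crucial and most delicate step is to deduce $Q_1=Q_2$ from this alignment. I would argue by contradiction: if $Q_2\not\leq Q_1$, the normalizer condition in the $p$-group $Q_2$ yields $r\in Q_2\setminus Q_0$ with $R:=\langle Q_0,r\rangle$ of index $p$ over $Q_0$ and $Q_0\trianglelefteq R$. Then $\psi_2|_R$ and $\psi_1$ are compatible $\cF$-morphisms on $R$ and $Q_1$ respectively (agreeing on $R\cap Q_1=Q_0$), and the plan is to amalgamate them into a single $\cF$-morphism on a subgroup of $S$ strictly containing $Q_1$, contradicting nonextendability of $\psi_1$. The obstacle is that $\langle Q_1,R\rangle$ need not decompose as a clean amalgamated product, so naive patching can fail. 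I would bypass this by passing to a fully $\cF$-normalized $\cF$-conjugate $Q_0^*$ of $Q_0$ (and composing $\psi_1,\psi_2$ with the corresponding $\cF$-isomorphism) so that the normalizer fusion system $N_\cF(Q_0^*)$ is saturated; it is moreover constrained because $Q_0^*$ is $\cF$-centric, so by the Model Theorem it is realized by a finite group $M$ in which the problematic morphisms correspond to inner conjugations. The amalgamation then reduces to the elementary statement that two elements of $M$ whose conjugation actions agree on a common subgroup differ by an element centralizing that subgroup, producing the desired strict extension.

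Once $Q_1=Q_2=:Q$ is established, a second application of Puig's Proposition to $\psi_1,\psi_2\in\cF(Q,S)$ agreeing on $P$ supplies some $z'\in Z(P)$ with $\psi_2=\psi_1\circ c_{z'}$. Composing with the Step~1 adjustment, the original pair differs by precomposition with $c_{zz'}$, where $zz'\in Z(P)$, as claimed.
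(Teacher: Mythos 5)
Your opening and closing steps are fine and coincide with the paper's Step~(1): aligning the two extensions on $Q_0=Q_1\cap Q_2$ via Proposition~\ref{every morphism in the centric orbit category is epi reason}, and applying it once more after the domains are known to agree. The gap is in the middle step, and it is the essential one. Every available gluing device --- the extender together with the extension axiom (which is what the paper uses, and which is all that is needed since $P$ centric makes $\varphi P$ fully centralized), or conjugation inside a model --- requires the common subgroup to be normal in \emph{both} sources. Your $Q_0$ is normal in $R$ by construction, but it need not be normal in $Q_1$. So at best you can amalgamate $\psi_2|_R$ with $\psi_1|_{N_{Q_1}(Q_0)}$, producing a morphism on $\langle N_{Q_1}(Q_0),R\rangle$; this group strictly contains $N_{Q_1}(Q_0)$ but in general does not contain $Q_1$, so it is not an extension of $\psi_1$ and yields no contradiction with nonextendability. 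This is exactly why the paper's Step~(3) runs a downward induction on the index $[Q_1:Q_1\cap(\cdot)]$: the amalgam $\eta$ is compared with $\psi_1$ afresh, their domains now meet in a subgroup containing $N_{Q_1}(Q_0)\gneq Q_0$, and the inductive hypothesis forces the amalgam's domain into $Q_1$, whence $N_R(Q_0)\leq Q_1\cap R=Q_0$, contradicting normalizer growth in the $p$-group $R$. That induction is the engine of the proof and is absent from your argument.

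The model-theorem detour does not repair this, and has problems of its own. The morphism $\psi_2|_R$ carries $Q_0$ to $\psi_1(Q_0)$, which is not $Q_0$ in general, so it is not a morphism of $N_\cF(Q_0^*)$ and is not realized by conjugation in the model $M$; and the $\cF$-isomorphism moving $Q_0$ to a fully normalized conjugate $Q_0^*$ is (by Lemma~\ref{lemNormalizerMap}) only guaranteed on $N_S(Q_0)$, so it transports $\psi_1|_{N_{Q_1}(Q_0)}$ but not $\psi_1$ itself --- the same obstruction reappears. Even granting a group realization, knowing $g_2=g_1c$ with $c$ centralizing $Q_0$ does not make ${}^{g_1}R$ land in $S$, so no strict extension of $\psi_1$ is produced. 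I recommend replacing this step by the paper's two-stage argument: first amalgamate over a base that is normal in both sources using the extender $N_\varphi$ (Step~(2)), then run the induction on the index of the intersection (Step~(3)).
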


\begin{proof}   We break the proof into three steps.

\noindent (1) \emph{Conjugate uniqueness on intersections}:  First suppose that we have two (possibly extendable) extensions $\eta_1\in\cF(A_1,S)$ and $\eta_2\in\cF(A_2,S)$ of $\varphi\in\cF(P,S)$, and set $B:=A_1\cap A_2.$ Then $\eta_1|_B$ and $\eta_2|_B$ are two extensions of $\varphi$ with the same source $B$, so by Proposition \ref{every morphism in the centric orbit category is epi reason} there is some $z\in Z(P)$ such that $\eta_2|_B=\eta_1\circ c_z|_B$.  Thus, up to precomposition with conjugation by a central element of $P$, we may assume that any two extensions of $\varphi$ agree wherever both are defined.

\[
\xymatrix{
&&&&S\\
A_1\ar@/^2pc/@{-->}[rrrru]|{\eta_1}\ar@/^/[rrrru]|{\eta_1\circ c_z}&&A_2\ar@/^/[rru]|{\eta_2}\\
&B\ar@{.}[ul]\ar@{.}[ur]\ar@/_1pc/[rrruu]|{\eta_1\circ c_z|_B}="a"\ar@/_3pc/[rrruu]|{\eta_2|_B}="b"
\ar@{}"a";"b"|=\\
&P\ar@/_4pc/[rrruuu]|\varphi="c"\ar@{.}[u]
\ar@{}"c";"b"|\circlearrowright
}
\]

\noindent (2) \emph{Existence and conjugate uniqueness of normal extensions}:  Suppose now we have two extensions (still possibly extendable) $\eta_1\in\cF(A_1,S)$ and $\eta_2\in\cF(A_2,S)$  of $\varphi\in\cF(P,S)$, and that $P\trianglelefteq A_i$, $i=1,2$.  Set $C:=\langle A_1,A_2\rangle\leq N_S(P)$.  Recall that $N_\varphi$, the extender of $\varphi$, is the largest subgroup of $N_S(P)$ for which there exists an extension of $\varphi$ (because all subgroups in sight are $\cF$-centric).  By assumption, we have $A_i\leq N_\varphi$, $i=1,2$. Hence $C\leq N_\varphi$ as well, and there is some $\zeta\in\cF(C,S)$ that extends $\varphi$.  As $\zeta|_{A_i}$ and $\eta_i$ are two morphisms  in $\cF(A_i,S)$ that extend $\varphi$, Proposition \ref{every morphism in the centric orbit category is epi reason} implies that there is some $z_i\in Z(P)$ such that $\eta_i\circ c_{z_i}=\zeta|_{A_i}\in\cF(A_i,S)$.  Thus, up to composition with conjugation by a central element of $P$, the extensions $\eta_1$ and $\eta_2$ of $\varphi$ have a common extension, at least when $P$ is normal in the sources of the $\eta_i$.
\[
\xymatrix{
&&&&S\\
&C\ar@/^/[rrru]|\zeta\\
A_1\ar@{.}[ur]\ar[urrrru]|{\eta_1\circ c_{z_1}}&&A_2\ar@{.}[ul]\ar[urru]|{\eta_2\circ c_{z_2}}\\
&P\ar@{.}[ur]|\trianglelefteq\ar@{.}[ul]|\trianglelefteq\ar@/_2pc/[ururur]|\varphi
}
\]

\noindent (3) \emph{General uniqueness}:  Finally, suppose that $\psi\in\cF(Q,S)$ is a \emph{nonextendable} extension of $\varphi$, and $\chi\in\cF(R,S)$ is some extension.  We will show that $R\leq Q$ and that there is some $z\in Z(P)$ such that  $\psi|_{R}=\chi\circ c_z\in\cF(R,S)$.  Clearly this will imply the overall result.

Set $B:=Q\cap R$.  By step (1), we may assume that $\psi|_B=\chi|_B$.  If $B=Q$, the nonextendability of $\psi$ forces $R=Q$, and we have our result.

Let us therefore induct on the index $[Q:B]$.  If $B\lneq Q$, then either $R\leq Q$ (and we're done) or $B$ is properly contained in both $N_Q(B)$ and $N_R(B)$.  Set $C:=\langle N_Q(B),N_R(B)\rangle$; by the second step, there is some $\eta\in\cF(C,S)$ that also extends $\varphi$, and such that $\eta|_{N_R(B)}=\chi\circ c_z|_{N_R(B)}$ for some $z\in Z(P)$.  As $[Q:C\cap Q]\leq[Q:N_Q(B)]<[Q:B]$, our inductive hypothesis gives us that $C\leq Q$.  In particular, $N_R(B)\leq Q$.  If $B=R\cap Q$ is properly contained in $R$ this yields a contradiction, so we conclude $R\leq Q$, and we're done.
\end{proof}

\begin{theorem}\label{minimal centric biset structure}
The $(S,S)$-biset $\Omega=\coprod\limits_{(\psi\colon Q\to Q')\in\sI}[Q,\psi]$ is the minimal $\cF$-centric characteristic biset.
\end{theorem}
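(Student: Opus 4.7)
The plan is to identify $\Omega$ as the minimal $\cF$-centric stable biset containing the orbit $[S,\id_S]$. By Remark \ref{rmk:truncating general char gives centric char} together with Theorem \ref{parameterization of semicharacteristic bisets}, $\cF$-centric semicharacteristic bisets are freely generated by (the truncations of) $\Omega_P$ for centric $P$, and the minimal centric characteristic biset is the one with coefficient $1$ on $\Omega_S$ and $0$ elsewhere---equivalently, the minimal centric stable biset containing $[S,\id_S]$.

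First I verify that $\Omega$ is $\cF$-generated with centric point-stabilizers (immediate from the construction of $\sI$) and $\cF$-centric stable. The stability amounts to the claim that $|\Omega^{(P,\varphi)}|=|Z(P)|$ for every centric $P\leq S$ and every $\varphi\in\cF(P,S)$. By Proposition \ref{omnibus transport extender}(c), each orbit $[Q,\psi]$ contributes $|N_{\varphi,\psi}|\cdot|Z(P)|/|Q|$ fixed points. The key input is Corollary \ref{extensions in centric fusion system are unique up to conjugation by central elements}: each pair $(y,x)\in N_{S\times S}((P,\varphi),(Q,\psi))$ produces a nonextendable extension $c_{y^{-1}}\psi c_x$ of $\varphi$, and all nonextendable extensions of $\varphi$ lie in a single $\sim$-equivalence class. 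So exactly one $\psi\in\sI$ contributes---namely the representative $\psi_0$ of that class---and a count using the $Z(P)$-torsor structure on nonextendable extensions gives $|N_{\varphi,\psi_0}|\cdot|Z(P)|/|Q_0|=|Z(P)|$.

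Next, $\Omega$ clearly contains $[S,\id_S]$, since $\id_S$ represents the trivial class in $\Aut_\cF(S)/{\sim}$. For minimality, any $\cF$-centric stable biset $\Omega''$ containing $[S,\id_S]$ must contain $[Q,\psi]$ for every $\psi\in\sI$: the nonextendability of $\psi$ combined with the graph analysis of Section \ref{secBisets} forces any orbit of $\Omega''$ contributing to $|\Omega''^{(Q,\psi)}|$ to be $(S,S)$-isomorphic to $[Q,\psi]$, each such orbit then contributing exactly $|Z(Q)|$ fixed points; while $\cF$-stability together with $[S,\id_S]\subseteq\Omega''$ yields $|\Omega''^{(Q,\psi)}|=|\Omega''^{(Q,\iota_Q^S)}|\geq|[S,\id_S]^{(Q,\iota_Q^S)}|=|Z(Q)|$. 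Finally the Sylow condition, needed to conclude $\Omega$ is characteristic rather than merely semicharacteristic, follows as in Theorem \ref{parameterization of semicharacteristic bisets}: orbits with $|Q|<|S|$ contribute $0\pmod p$ to $|\Omega|/|S|$, while orbits with $Q=S$ biject with $\Out_\cF(S)=\Aut_\cF(S)/{\sim}$, of order prime to $p$ by saturation.

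The main obstacle is the $\cF$-centric stability count: reducing to the single contributing $\psi_0$ is clean via Corollary \ref{extensions in centric fusion system are unique up to conjugation by central elements}, but verifying $|N_{\varphi,\psi_0}|=|Q_0|$ requires careful bookkeeping of how the parameters $(y,x)$ interact with the $Z(P)$-torsor of nonextendable extensions and with the source-matching constraint placing $x$ in a coset of $N_S(\tilde P)$.
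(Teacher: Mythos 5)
Your proposal follows essentially the same route as the paper: verify generation and the Sylow condition directly, reduce the fixed-point count $\abs{\Omega^{(P,\varphi)}}$ to a single orbit $[Q_0,\psi_0]$ via Corollary \ref{extensions in centric fusion system are unique up to conjugation by central elements}, and evaluate it with Proposition \ref{omnibus transport extender}(c). The one step you defer as ``careful bookkeeping,'' namely $\abs{N_{\varphi,\psi_0}}=\abs{Q_0}$, is in fact the substantive point of the whole proof and is not mere bookkeeping: the paper establishes the inclusion $N_{\varphi,\psi_0}\subseteq Q_0$ by applying the uniqueness corollary a \emph{second} time to the two extensions $c_y^{-1}\circ\psi_0\circ c_x$ and $\psi_0$ of $\varphi$ (yielding $\lc{x^{-1}}Q_0=Q_0$ and a central element $z$ with $c_y^{-1}\circ\psi_0\circ c_x=\psi_0\circ c_z$), and then invoking the extension axiom to conclude $N_{\psi_0}=Q_0$ from the nonextendability of $\psi_0$, whence $zx^{-1}\in Q_0$ and $x\in Q_0$. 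Your minimality argument (any centric stable biset containing $[S,\id_S]$ must contain every $[Q,\psi]$, $\psi\in\sI$, by a fixed-point comparison) is a valid, slightly more hands-on substitute for the paper's appeal to the basis parameterization, and everything else checks out.
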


\begin{proof}
Clearly $\Omega$ is $\cF$-generated, all point-stabilizers are $\cF$-twisted diagonals with source $\cF$-centric subgroups, $\Omega$ has precisely one orbit isomorphic to $[S,\id_S]$, and no other orbits are isomorphic to $[P,\iota_P^S]$.  Moreover, the only orbits of order $\abs{S}$ are those of the form $[S,\alpha]$ for $\alpha\in\Out_\cF(S)$.  Therefore $\abs{\Omega}/\abs{S}\equiv\abs{\Out_\cF(S)}\not\equiv 0$ modulo $p$.   Thus the only thing to do is show that $\Omega$ is $\cF$-stable on $\cF$-centric subgroups.  If $P\leq S$ is $\cF$-centric, $\varphi\in\cF(P,S)$, and $\omega\in\Omega$ has point-stabilizer $(Q,\psi)$, it is clear that $\omega\in\Omega^{(P,\varphi)}$ if and only if $(P,\varphi)\leq(Q,\psi)$, i.e., $P\leq Q$ and $\psi$ is an extension of $\varphi$.

Proposition \ref{extensions in centric fusion system are unique up to conjugation by central elements} implies that any two elements of $\Omega^{(P,\varphi)}$ must lie in the same $(S,S)$-orbit of $\Omega$:  If $\omega_i\in\Omega^{(P,\varphi)}$ have stabilizers $(Q_i,\psi_i)$, $i=1,2$, then $Q_1=Q_2$ and there is some $z\in Z(P)$ such that $\psi_2=\psi_1\circ c_z$.  As $(Q_1,\psi_1)$ and $(Q_1,\psi_1\circ c_z)$ are $(S\times S)$-conjugate, and  $\Omega$ has no two orbits that are isomorphic, we conclude that $\omega_1$ and $\omega_2$ lie in the same orbit.  Thus $\abs*{\Omega^{(P,\varphi)}}=\abs*{[Q,\psi]^{(P,\varphi)}}$, with $\psi$ our chosen representative in $\sI$ of the unique nonextendable extension of $\varphi$. By Proposition \ref{omnibus transport extender} (c),
\[
\abs*{[Q,\psi]^{(P,\varphi)}}=\frac{\abs{N_{\varphi,\psi}}\cdot \abs{C_S(\varphi P)}}{\abs{Q}}=\frac{\abs{N_{\varphi,\psi}}}{\abs{Q}}\cdot\abs{Z(P)}.
\]
We claim that $N_{\varphi,\psi}=Q$, so that order of the fixed point set is $\abs{Z(P)}$.  As this order depends only on the source of $\varphi$, it will follow that $\Omega$ is $\cF$-stable on $\cF$-centrics.

Recall that $N_{\varphi,\psi}=\left\{x\in N_S(P,Q)\Mid \psi\circ c_x\circ\varphi^{-1}\in\Hom_S(\varphi P,\psi Q)\right\}$.  If $q\in Q$, we have
\[
\psi\circ c_q\circ\varphi^{-1}=c_{\psi(q)}\circ\psi\circ\varphi^{-1}=c_{\psi(q)}\circ\varphi\circ\varphi^{-1}=c_{\psi(q)}\in\Hom_S(\varphi P,\psi Q).
\]
Therefore $Q\subseteq N_{\varphi,\psi}$.

For the other direction, fix $x\in N_{\varphi,\psi}$.   There is some $y\in N_S(\varphi P,\psi Q)$ such that
\[\psi\circ c_x\circ\varphi^{-1}=c_y\in\Hom_S(\varphi P,\psi Q),
 \textrm{ hence } c_y^{-1}\circ\psi\circ c_x|_P=\varphi\in\cF(P,\varphi P).
 \]  Thus $c_y^{-1}\circ\psi\circ c_x\in\cF(\lc{x^{-1}}Q,S)$ and $\psi\in\cF(Q,S)$ are two extensions of $\varphi$; by Proposition \ref{extensions in centric fusion system are unique up to conjugation by central elements} we have $\lc{x^{-1}}Q\leq Q$, hence $\lc{x^{-1}}Q=Q$, and  there is  $z\in Z(P)$ such that  $c_y^{-1}\circ\psi\circ c_x=\psi\circ c_z$.  Rewriting this as $\psi\circ c_{z x^{-1}}\circ\psi^{-1}=c_{y^{-1}}\in\Aut_S(\psi Q)$.  We have $zx^{-1}\in N_S(Q)$, so  that $z x^{-1}\in N_{\psi,\psi}=N_\psi$.  As $\psi$ is nonextendable (with target an $\cF$-centric, and hence fully $\cF$-centralized, subgroup),  $N_\psi=Q$ by the extension axiom for saturated fusion systems.  Thus $zx^{-1}\in Q$.  As $z\in Z(P)\leq P\leq Q$, we conclude $x\in Q$, and the proof is complete.
\end{proof}

\section{$K$-normalizers}\label{sec:KNormalizers}
For any saturated fusion system $\cF$ on $S$ and any fully $\cF$-normalized subgroup $P\leq S$ we can consider the associated normalizer fusion system $N_\cF(P)$; similarly for fully $\cF$-centralized $P$ and the centralizer fusion system $C_\cF(P)$. We might wonder whether it is possible to construct a minimal characteristic biset for $N_\cF(P)$ if we are given a minimal characteristic biset for $\cF$. In this section we introduce  a normalizer  subbiset $N_\Omega(P)\subseteq\Omega$ for a subgroup $P$ (resp., centralizer subbiset $C_\Omega(P)$) and show that in many cases this will be a characteristic biset for $N_\cF(P)$ (resp. $C_\cF(P)$).


\begin{definition}\label{K-normalizer definitions}
For $P\leq S$ and a subgroup $K\leq \Aut(P)$, we define the following concepts:
\begin{itemize}
\item The \emph{$K$-normalizer} of $P$ in $S$ is the group $N_S^K(P)=\left\{n\in N_S(P)\Mid c_n|_P\in K\right\}$.
\item If $Q\leq S$ is isomorphic to $P$ via an abstract group isomorphism $\varphi\colon P\to Q$, set $\lc\varphi K=\left\{\varphi\circ\alpha\circ\varphi^{-1}\Mid\alpha\in K\right\}\leq\Aut(Q)$.
\item  $P$ is \emph{fully $K$-normalized in $\cF$} if for all $\varphi\in\cF(P,S)$ we have $\abs[\big]{N_S^K(P)}\geq\abs[\big]{N_S^{\lc \varphi K}(\varphi P)}$.
\item The \emph{$K$-normalizer fusion system} is  the fusion system $N_\cF^K(P)$ on $N_S^K(P)$ with morphisms given by
\begin{multline*}
\hspace{\leftmargin}\Hom_{\cN_\cF^K(P)}(A,B)=\{\varphi\in\cF(A,B)\Mid\exists\textrm{ }\widetilde\varphi\in\cF(PA,PB)\\
\textrm{ s.t. }\widetilde\varphi|_A=\varphi,\textrm{ }\widetilde\varphi P=P,\textrm{ and }\widetilde\varphi|_P\in K\}.
\end{multline*}
\end{itemize}
\end{definition}

\begin{prop}[{\cite[Propositions 2.12 \& 2.15]{PuigFrobeniusCategories}}]\label{K-normalizer basics}
Let $P\leq S$ and $K\leq \Aut(P)$. Then $P$ is fully $K$-normalized in $\cF$ if and only if $P$ is fully $\cF$-centralized and $\Aut_S(P)\cap K\in\Syl_p(\cF(P)\cap K)$.

Furthermore, if $P$ is fully $K$-normalized in $\cF$, then $N_\cF^K(P)$ is a saturated fusion system on $N_S^K(P)$.
\end{prop}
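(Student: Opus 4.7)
The statement has two rather distinct parts: a numerical equivalence characterizing full $K$-normalization, and a saturation claim. The numerical part rests on the short exact sequence
\[
1 \to C_S(P) \to N_S^K(P) \to \Aut_S(P) \cap K \to 1
\]
obtained from the map $n \mapsto c_n|_P$, yielding $\abs{N_S^K(P)} = \abs{C_S(P)} \cdot \abs{\Aut_S(P) \cap K}$ and, after replacing $(P,K)$ by $(\varphi P,\lc\varphi K)$ for any $\varphi \in \cF(P,S)$, the analogous factorization of $\abs{N_S^{\lc\varphi K}(\varphi P)}$. Full $K$-normalization thus becomes a product inequality comparing the two factorizations across $\cF$-conjugates.

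For the equivalence, the ($\Leftarrow$) direction follows by bounding each factor separately: full $\cF$-centralization gives $\abs{C_S(\varphi P)} \leq \abs{C_S(P)}$, while $\Aut_S(\varphi P) \cap \lc\varphi K$ is a $p$-subgroup of $\cF(\varphi P) \cap \lc\varphi K$, which is abstractly isomorphic to $\cF(P) \cap K$ via conjugation by $\varphi$, so the Sylow hypothesis bounds its size by $\abs{\Aut_S(P) \cap K}$. For ($\Rightarrow$), the plan is to pick $\psi \in \cF(P,S)$ with $Q := \psi P$ fully $\cF$-normalized; saturation of $\cF$ then yields $\Aut_S(Q) \in \Syl_p(\Aut_\cF(Q))$, so a Sylow $p$-subgroup of $\cF(Q) \cap \lc\psi K$ may be conjugated into $\Aut_S(Q)$ by some $\chi \in \Aut_\cF(Q)$. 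Replacing $\psi$ by $\chi\psi$, both factors on the $Q$-side attain their $\cF$-conjugacy-class maxima, and the hypothesis that $P$ itself be fully $K$-normalized forces the corresponding factors for $P$ to match, simultaneously yielding both the Sylow condition on $\Aut_S(P) \cap K$ and full $\cF$-centralization of $P$.

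The main obstacle is the saturation of $N_\cF^K(P)$. The difficulty is that morphisms in $N_\cF^K(P)$ are defined through the existence of an $\cF$-extension $\widetilde\varphi \colon PA \to PB$ that respects $P$ setwise and restricts on $P$ to an element of $K$, so one must track three layers of data at once: the $N_\cF^K(P)$-morphism, its non-unique $\cF$-extension, and the action on $P$ landing in $K$. My strategy is to set up a correspondence sending $A \leq N_S^K(P)$ to $PA \leq S$ and lift each $N_\cF^K(P)$-morphism to an $\cF$-morphism on the enlarged source, then transfer saturation from $\cF$ through this dictionary. For the Sylow axiom, with $A$ fully $N_\cF^K(P)$-normalized, one analyzes $\Aut_\cF(PA)$ and its subgroup of automorphisms normalizing $P$ with restriction in $K$; applying the first part of the proposition to $P \leq PA$ with the auxiliary group $K' := K \cap \Aut(P)$ controls the $p$-Sylow behavior on the $P$-layer, and combined with full $\cF$-normalization of $PA$ (or a conjugate) this descends to the desired Sylow statement for $A$. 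For the extension axiom, given $\varphi \in N_\cF^K(P)(A, N_S^K(P))$ with $\varphi A$ fully $N_\cF^K(P)$-centralized, one picks an $\cF$-extension $\widetilde\varphi \colon PA \to S$ with $\widetilde\varphi|_P \in K$, uses the $\cF$-extender to extend further, and verifies that the resulting map restricts into $N_\cF^K(P)$ on all of the $N_\cF^K(P)$-extender of $\varphi$. The subtlety I expect to fight hardest is that an $\cF$-extension of something with restriction in $K$ need not restrict into $K$ itself on the larger source; correcting this requires composing with inner automorphisms from $C_S(P)$ and reinvoking the Sylow condition already established to land back inside $N_\cF^K(P)$.
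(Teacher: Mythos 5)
The paper does not prove this proposition: it is quoted from Puig (Propositions 2.12 and 2.15 of \cite{PuigFrobeniusCategories}; see also Theorem I.5.5 of \cite{AKO}), so there is no in-paper argument to compare against. Judged on its own terms, your treatment of the numerical equivalence is correct and is the standard one: the factorization $\abs{N_S^K(P)}=\abs{C_S(P)}\cdot\abs{\Aut_S(P)\cap K}$ from the exact sequence $1\to C_S(P)\to N_S^K(P)\to\Aut_S(P)\cap K\to 1$, the term-by-term bound for ($\Leftarrow$), and for ($\Rightarrow$) the passage to a fully normalized $Q=\psi P$ followed by adjusting $\psi$ by an element $\chi\in\Aut_\cF(Q)$ so that a Sylow $p$-subgroup of $\Aut_\cF(Q)\cap\lc{\psi}K$ lands in $\Aut_S(Q)$; both factors are then simultaneously maximized on the $Q$-side and full $K$-normalization of $P$ forces equality factor by factor. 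That half I would accept.

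The saturation of $N_\cF^K(P)$, however, is left as a plan rather than a proof, and the plan is imprecise at exactly the point where the difficulty sits. First, the ``auxiliary group $K':=K\cap\Aut(P)$'' is vacuous ($K$ is already a subgroup of $\Aut(P)$); what the argument actually needs is an auxiliary subgroup of $\Aut(PA)$, namely the automorphisms of $PA$ preserving both $P$ and $A$ and restricting on $P$ into $K$, together with a proof that for $A$ fully $\cN$-normalized one may arrange $PA$ to be fully normalized in $\cF$ with respect to that subgroup --- this is the counting argument that carries the Sylow axiom for $\cN$, and it is not present. Second, for the extension axiom you correctly flag that an $\cF$-extension of $\widetilde\varphi$ need not restrict into $K$ on $P$, but ``composing with inner automorphisms from $C_S(P)$ and reinvoking the Sylow condition'' is a statement of hope, not an argument: one has to compare the $\cN$-extender $N_\varphi^{\cN}$ with the $\cF$-extender of the chosen lift $\widetilde\varphi$ and verify that the corrected extension actually defines a morphism of $\cN$ on all of $N_\varphi^{\cN}$. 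As it stands the second half of the proposition is asserted with a strategy but not proved; since the paper itself defers to Puig for precisely this content, you should either carry out these two verifications in full or likewise cite them.
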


\begin{example}
We have the following special cases of $K$-normalizers:
\begin{itemize}
\item If $K=\{\id_P\}$ is the trivial subgroup of $\Aut(P)$, then $N_\cF^K(P)=C_\cF(P)$ is the \emph{centralizer fusion subsystem of $P$}, whose underlying $p$-group is $C_S(P)$.
\item If $K=\Aut(P)$ is the full automorphism group of $P$, then $N_\cF^K(P)=N_\cF(P)$ is the \emph{normalizer fusion subsystem of $P$}, whose underlying $p$-group is $N_S(P)$.
\end{itemize}
\end{example}

In the following, we let $\Omega$ be some fixed $\cF$-semicharacteristic $(S,S)$-biset.

\begin{definition}
For any  $P\leq S$ and $K\leq\Aut(P)$,  the \emph{$K$-normalizer of P in $\Omega$} is
\[
N_\Omega^K(P):=\left\{\omega\in\Omega\Mid \Stab_{S\times S}(\omega)=(Q,\psi),\textrm{ }P\leq Q,\textrm{ }\psi P=P,\textrm{ and }\psi|_P\in K\right\}\subseteq\Omega.
\]

If $K=\{\id_P\}$, we denote the resulting \emph{centralizer of $P$ in $\Omega$} by $C_\Omega(P)$; if $K=\Aut(P)$, the \emph{normalizer of $P$ in $\Omega$} will be written $N_\Omega(P)$.
\end{definition}

\begin{remark}\label{effect of translation on twisted graph stabilizers}
If $\omega\in\Omega$ has stabilizer $(Q,\psi)$, then for any $a,b\in S$, we have $$\Stab_{S\x S}(a\cdot\omega\cdot b)=(Q^b,c_a\circ\psi\circ c_b).$$  In particular, $N_\Omega^K(P)$ need not be an $(S,S)$-biset.
\end{remark}

\begin{lemma}
$N_\Omega^K(P)$ is naturally an $(N_S^K(P),N_S^K(P))$-biset.
\end{lemma}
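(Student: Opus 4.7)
The plan is to show that $N_\Omega^K(P)$ is closed under left and right multiplication by $N_S^K(P)$. Freeness of the biset action then follows immediately from the freeness of the ambient $(S,S)$-biset $\Omega$.

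The main tool is the stabilizer computation recorded in Remark \ref{effect of translation on twisted graph stabilizers}: if $\omega \in \Omega$ has stabilizer $(Q,\psi)$, then for any $a,b\in S$, the translate $a\cdot\omega\cdot b$ has stabilizer $(Q^b,\, c_a\circ\psi\circ c_b)$, where $Q^b = b^{-1}Qb$.

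So fix $\omega\in N_\Omega^K(P)$ with stabilizer $(Q,\psi)$, where $P\leq Q$, $\psi P = P$, and $\psi|_P\in K$. Take $a,b\in N_S^K(P)$; I need to check the three membership conditions for $a\cdot\omega\cdot b$. First, since $b\in N_S(P)$, we have $P = P^b \leq Q^b$. Second, evaluating $c_a\circ\psi\circ c_b$ on $P$, conjugation by $b$ sends $P$ into $P$, $\psi$ sends $P$ to $P$ by hypothesis, and conjugation by $a$ sends $P$ into $P$, so the composite sends $P$ onto $P$. Third, restricting to $P$ we obtain
\[
(c_a\circ\psi\circ c_b)\big|_P \;=\; c_a\big|_P \,\circ\, \psi\big|_P \,\circ\, c_b\big|_P,
\]
a composition of three elements of $K$ (using $a,b\in N_S^K(P)$ and $\psi|_P\in K$); since $K\leq \Aut(P)$ is a subgroup, the composite lies in $K$.

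Thus $a\cdot\omega\cdot b\in N_\Omega^K(P)$, so left and right multiplication by $N_S^K(P)$ preserve $N_\Omega^K(P)$. The two actions commute and are free because they are inherited from the free $(S,S)$-biset structure on $\Omega$. There is essentially no obstacle here beyond bookkeeping; the content of the statement is simply that the three defining conditions of $N_\Omega^K(P)$ are stable under the transformation $(Q,\psi)\mapsto (Q^b, c_a\circ\psi\circ c_b)$ whenever $a,b\in N_S^K(P)$.
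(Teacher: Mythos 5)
Your proof is correct and follows the same route as the paper: both apply the stabilizer formula of Remark \ref{effect of translation on twisted graph stabilizers} and verify that the three defining conditions of $N_\Omega^K(P)$ are preserved under the transformation $(Q,\psi)\mapsto(Q^b,c_a\circ\psi\circ c_b)$ for $a,b\in N_S^K(P)$. The only difference is that you spell out the freeness and commutativity of the inherited actions, which the paper leaves implicit.
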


\begin{proof}
$n,m\in N_S^K(P)$, $\omega\in N_\Omega^K(P)$. If $\Stab_{S\x S}(\omega)=(Q,\psi)$, then $\Stab_{S\x S}(n\cdot\omega\cdot m)=(Q^m,c_n\circ\psi\circ c_m)$. It is clear that $P\leq Q^m$ and $(c_n\circ\psi\circ c_m)(P)=P$, and as each $c_m|_P$, $c_n|_P$, and $\psi|_P$ lie in $K$ it follows that $n\cdot \omega\cdot m\in N_\Omega^K(P)$, and the claim is proved.
\end{proof}

\begin{notation}
For the rest of this section, $P$ denotes some chosen subgroup of $S$, and we set $N:=N_S^K(P)$ and $\cN:=N_\cF^K(P)$.
\end{notation}

As the first step in deciding whether $N_\Omega^K(P)$ is $\cN$-characteristic, we describe the $(N,N)$-stabilizer of each element in $N_\Omega^K(P)$.

\begin{lemma}\label{stabilizers of points in automizers}
$\omega\in N_\Omega^K(P)$.  If $\Stab_{S\x S}(\omega)=(Q,\psi)$, then $\Stab_{N\x N}(\omega)=\left(N\cap Q,\psi\big|_{N\cap Q}\right)$.
\end{lemma}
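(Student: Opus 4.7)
The plan is to observe that the stabilizer of $\omega$ in any subgroup of $S \times S$ is just the intersection of the ambient stabilizer with that subgroup, so
\[
\Stab_{N\times N}(\omega) = (N\times N)\cap (Q,\psi) = \bigl\{(\psi(q),q)\Mid q\in Q,\ q\in N,\ \psi(q)\in N\bigr\}.
\]
The right-hand side is automatically the graph of $\psi$ restricted to some subset of $Q\cap N$, so the only real work is to show that whenever $q\in N\cap Q$ one automatically has $\psi(q)\in N=N_S^K(P)$ as well; the conclusion $\Stab_{N\times N}(\omega)=(N\cap Q,\psi|_{N\cap Q})$ will then follow at once.

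To verify this, I would split the membership $\psi(q)\in N_S^K(P)$ into the two conditions $\psi(q)\in N_S(P)$ and $c_{\psi(q)}|_P\in K$. For the first, I use the defining features of $N_\Omega^K(P)$: $P\leq Q$ and $\psi(P)=P$. Since $q\in N$ normalizes $P$, we have $qPq^{-1}=P$, an equation which lives inside $Q$; applying the group homomorphism $\psi\colon Q\to S$ gives $\psi(q)\cdot P\cdot \psi(q)^{-1}=\psi(P)=P$, so $\psi(q)$ normalizes $P$.

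For the second condition, a direct computation in $\Aut(P)$ yields
\[
c_{\psi(q)}|_P \;=\; \psi|_P \circ c_q|_P \circ \psi|_P^{-1},
\]
which makes sense because $\psi|_P$ is an automorphism of $P$ by the definition of $N_\Omega^K(P)$. Here both $\psi|_P\in K$ (by the last defining condition of $N_\Omega^K(P)$) and $c_q|_P\in K$ (by the definition of $N=N_S^K(P)$), and since $K\leq \Aut(P)$ is a subgroup it is closed under conjugation, giving $c_{\psi(q)}|_P\in K$ as required.

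I do not expect any genuine obstacle in carrying this out; the argument is little more than unwinding definitions. The only point to flag is that the identity $c_{\psi(q)}|_P=\psi|_P\,c_q|_P\,\psi|_P^{-1}$ implicitly uses $P\leq Q$ to make sense of applying $\psi$ to elements of $P$, which is exactly the first of the three conditions packaged into the definition of $N_\Omega^K(P)$; without that condition the statement of the lemma would not even be well-posed.
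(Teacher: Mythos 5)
Your argument is correct and is essentially identical to the paper's proof: both reduce the lemma to showing $\psi(N\cap Q)\leq N$, verify $\psi(q)\in N_S(P)$ using $\psi P=P$, and verify $c_{\psi(q)}|_P=(\psi\circ c_q\circ\psi^{-1})|_P\in K$ since $\psi|_P, c_q|_P\in K$. No gaps.
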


\begin{proof}
The only nontrivial part  is that $\left(N\cap Q,\psi\big|_{N\cap Q}\right)\leq N\times N$, i.e., that $\psi(N\cap Q)\leq N$.  If $n\in N\cap Q$, then $n\in N_S(P)$, so $\psi(n)\in N_S(\psi P)=N_S(P)$. In addition we have $c_{\psi(n)}|_P=(\psi\circ c_n\circ\psi^{-1})|_P\in K$, so $\psi(n)\in N$ follows.
\end{proof}


\begin{lemma}\label{the number of extensions in normalizer subsystems depend only on the source}
$A,B,C\leq N$, $\varphi\in\cN_\iso(A,B)$, and $\psi\in\cN_\iso(A,C)$.  The number of extensions of $\varphi$ to $\widetilde\varphi\in\cN_\iso(PA,PB)$ equals the number of extensions of $\psi$ in $\cN_\iso(PA,PC)$.

Dually, if $A,B,C\leq N$, $\varphi\in\cN_\iso(A,C)$, and $\psi\in\cN_\iso(B,C)$, then the number of extensions of $\varphi$ to $\widetilde\varphi\in\cN_\iso(PA,PC)$ equals the number of extensions of $\psi$ in $\cN_\iso(PB,PC)$.
\end{lemma}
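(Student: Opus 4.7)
The plan is to build an explicit bijection between the two sets of extensions by ``translating'' via an auxiliary $\cN$-extension. The key observation is that post-composition (resp.\ pre-composition) with an extension of a compatible isomorphism preserves all the defining properties of being an extension in $\cN$, because $K$ is a subgroup of $\Aut(P)$ and hence closed under composition and inversion.

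First I would record that $\cN$ is closed under inverses: if $\varphi \in \cN_\iso(A,B)$ with witnessing extension $\widetilde\varphi \in \cF(PA,PB)$ satisfying $\widetilde\varphi(P)=P$ and $\widetilde\varphi|_P \in K$, then $\widetilde\varphi^{-1}$ has $(\widetilde\varphi^{-1})|_P = (\widetilde\varphi|_P)^{-1} \in K$, so $\varphi^{-1} \in \cN_\iso(B,A)$. Consequently the composite $\psi \circ \varphi^{-1}$ lies in $\cN_\iso(B,C)$, and by the definition of $\cN$ admits some extension $\chi \in \cF(PB,PC)$ with $\chi(P) = P$ and $\chi|_P \in K$.

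Next I would define $T : \{\text{extensions of }\varphi\text{ to }\cN_\iso(PA,PB)\} \to \{\text{extensions of }\psi\text{ to }\cN_\iso(PA,PC)\}$ by $T(\widetilde\varphi) := \chi \circ \widetilde\varphi$, and check the three conditions in turn: the restriction to $A$ is $\chi|_B \circ \varphi = (\psi \circ \varphi^{-1})\circ \varphi = \psi$; the image of $P$ is $\chi(\widetilde\varphi(P)) = \chi(P) = P$; and the restriction to $P$ is $\chi|_P \circ \widetilde\varphi|_P$, which lies in $K$ since $K$ is a subgroup of $\Aut(P)$. The same checks show that $\widetilde\psi \mapsto \chi^{-1}\circ \widetilde\psi$ is a two-sided inverse of $T$, yielding the desired bijection.

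For the dual statement the same argument applies after replacing post-composition by pre-composition: form $\psi^{-1}\circ \varphi \in \cN_\iso(A,B)$, pick any extension $\chi \in \cF(PA,PB)$, and verify that $\widetilde\varphi \mapsto \widetilde\varphi \circ \chi^{-1}$ is a bijection from extensions of $\varphi$ to extensions of $\psi$. The only real subtlety is the one already handled above---ensuring that each composition actually meets all three requirements of being an $\cN$-style extension---which is immediate once one observes the group structure of $K$.
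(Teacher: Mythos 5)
Your proof is correct. The only difference from the paper's argument is one of packaging: the paper fixes the common source $A$ and observes that the group $G\leq\cN(PA)$ of $\cN$-automorphisms of $PA$ restricting to $\id_A$ acts freely and transitively by precomposition on the set of extensions of \emph{any} $\cN$-isomorphism out of $A$, so that each such set has cardinality $\abs{G}$, a quantity visibly depending only on $A$; you instead build a direct bijection between the two extension sets by postcomposing with a fixed witness $\chi$ of $\psi\circ\varphi^{-1}\in\cN_\iso(B,C)$. Both arguments rest on exactly the same facts --- that $\cN$ is closed under composition and inversion because $K$ is a subgroup of $\Aut(P)$, and that for subgroups containing $P$ membership in $\cN$ is just the condition $\widetilde\varphi(P)=P$, $\widetilde\varphi|_P\in K$ --- so neither is more general, though the paper's torsor formulation has the small advantage of exhibiting the common count as the order of a single group attached to $A$. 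One cosmetic point: in the dual case you should say explicitly that $\chi$ is chosen to be an $\cN$-extension (i.e.\ a witness with $\chi(P)=P$ and $\chi|_P\in K$), as you did in the first case, rather than merely ``any extension in $\cF(PA,PB)$''; otherwise $\widetilde\varphi\circ\chi^{-1}$ need not land in $\cN$.
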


\begin{proof}
Any extension of  $\varphi\in\cN_\iso(A,B)$ with source $PA$ (whose existence is guaranteed by the definition of $\cN$) has image $PB$.  
If $\widetilde\varphi_1,\widetilde\varphi_2\in\cN_\iso(PA,PB)$ are extensions of $\varphi$, then $\widetilde\varphi_2^{-1}\circ\widetilde\varphi_1\in\cN(PA)$ and $\widetilde\varphi_2^{-1}\circ\widetilde\varphi_1\big|_A=\varphi^{-1}\circ\varphi=\id_A$. Let $G\leq \cN(PA)$ be the group of $\cN$-automorphisms of $PA$ that restrict to the identity on $A$, so that $G$ acts transitively on the set of lifts of $\varphi$ by precomposition.  This action is free, so the number of extensions of $\varphi$ to an $\cN$-isomorphism  with source $PA$ is $\abs{G}$.  The same is true for any other $\cN$-isomorphism with source $A$, and the result is proved.

The dual statement is proved by replacing each isomorphism with its inverse.
\end{proof}

\begin{prop}\label{K-normalizers of semicharacteristics bisets are semicharacteristic}
If $\Omega$ is an $\cF$-semicharacteristic biset, then $N_\Omega^K(P)$ is an $\cN$-semi\-charac\-ter\-istic $(N,N)$-biset.
\end{prop}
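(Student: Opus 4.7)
The approach is to verify the two defining axioms of an $\cN$-semicharacteristic biset separately. For $\cN$-generatedness, suppose $\omega\in N_\Omega^K(P)$ has $(S\times S)$-stabilizer $(Q,\psi)$. By Lemma \ref{stabilizers of points in automizers} its $(N\times N)$-stabilizer is the twisted diagonal $(N\cap Q,\psi|_{N\cap Q})$, so it suffices to show $\psi|_{N\cap Q}\in\cN$. Since $P\leq Q$, the restriction $\tilde\psi:=\psi|_{P(N\cap Q)}$ is a well-defined $\cF$-morphism extending $\psi|_{N\cap Q}$, and the hypothesis $\omega\in N_\Omega^K(P)$ forces $\tilde\psi(P)=\psi(P)=P$ and $\tilde\psi|_P=\psi|_P\in K$. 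This is precisely the condition in Definition \ref{K-normalizer definitions} for $\psi|_{N\cap Q}$ to be a morphism of $\cN$.

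For $\cN$-stability, fix $A\leq N$ and $\varphi\in\cN(A,N)$, and let $E(\varphi)$ denote the set of $\cN$-extensions of $\varphi$ with source $PA$, viewed as $\cF$-morphisms $\eta\colon PA\to S$ satisfying $\eta|_A=\varphi$, $\eta P=P$, and $\eta|_P\in K$. The key step is the disjoint decomposition
\[
(N_\Omega^K(P))^{(A,\varphi)}=\bigsqcup_{\eta\in E(\varphi)}\Omega^{(PA,\eta)}.
\]
Indeed, any $\omega$ on the left has $(S\times S)$-stabilizer $(Q,\psi)$ satisfying $PA\leq Q$, $\psi|_A=\varphi$, $\psi P=P$, and $\psi|_P\in K$, so $\eta:=\psi|_{PA}\in E(\varphi)$ and $\omega\in\Omega^{(PA,\eta)}$; conversely, every element of $\Omega^{(PA,\eta)}$ for $\eta\in E(\varphi)$ inherits the defining conditions from $\eta$ and thus lies in the left-hand set. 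Disjointness is automatic because $\eta$ is determined by $\Stab_{S\times S}(\omega)$.

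Applying $\cF$-stability of $\Omega$ to each summand yields $\abs{\Omega^{(PA,\eta)}}=\abs{\Omega^{(PA,\iota_{PA}^S)}}$, so
\[
\abs[\big]{(N_\Omega^K(P))^{(A,\varphi)}}=\abs{E(\varphi)}\cdot\abs[\big]{\Omega^{(PA,\iota_{PA}^S)}}.
\]
The two halves of Lemma \ref{the number of extensions in normalizer subsystems depend only on the source} give $\abs{E(\varphi)}=\abs{E(\iota_A^N)}=\abs{E(\varphi^{-1})}$, and $\cF$-stability applied to any chosen extension of $\varphi$ by $P$ gives $\abs{\Omega^{(PA,\iota_{PA}^S)}}=\abs{\Omega^{(P\varphi A,\iota_{P\varphi A}^S)}}$. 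Combining these yields the required symmetric equalities $\abs[\big]{(N_\Omega^K(P))^{(A,\varphi)}}=\abs[\big]{(N_\Omega^K(P))^{(A,\iota_A^N)}}=\abs[\big]{(N_\Omega^K(P))^{(\varphi A,\varphi^{-1})}}$. The main obstacle will be correctly identifying the fixed-point decomposition: a direct comparison with $\Omega^{(A,\varphi)}$ loses the defining $P$-condition, but once the $\cN$-extensions by $P$ appear as the indexing set, the ambient $\cF$-stability of $\Omega$ and the counting of Lemma \ref{the number of extensions in normalizer subsystems depend only on the source} plug in essentially for free.
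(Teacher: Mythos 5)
Your proof is correct and follows essentially the same route as the paper's: the same decomposition of $(N_\Omega^K(P))^{(A,\varphi)}$ as a disjoint union of $\Omega^{(PA,\eta)}$ over the $\cN$-extensions $\eta$ of $\varphi$, followed by $\cF$-stability of $\Omega$ and both halves of Lemma \ref{the number of extensions in normalizer subsystems depend only on the source}. The only cosmetic difference is that the paper first treats the case $P\leq A$ separately so as to phrase the final count in terms of $(N_\Omega^K(P))^{(PA,\iota_{PA}^N)}$, whereas you work directly with $\abs{\Omega^{(PA,\iota_{PA}^S)}}$; both are fine.
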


\begin{proof}
$N_\Omega^K(P)$ is $\cN$-generated: This is immediate from the definition  and Lemma \ref{stabilizers of points in automizers}.

$N_\Omega^K(P)$ is $\cN$-stable:  If $A\leq N$ and $\varphi\in\cN(A,N)$, we want to show that
\[
\abs*{(N_\Omega^K(P))^{(A,\varphi)}}=\abs*{(N_\Omega^K(P))^{(A,\iota_A^N)}}=\abs*{(N_\Omega^K(P))^{(\varphi A,\varphi^{-1})}}.
\]
If $P\leq A$, we claim that $(N_\Omega^K(P))^{(A,\varphi)}=\Omega^{(A,\varphi)}$.  The containment $\subseteq$ is obvious.  Suppose that $\omega\in\Omega^{(A,\varphi)}$, $\Stab_{S\x S}(\omega)=(Q,\psi)$.   We must have $(A,\varphi)\leq(Q,\psi)$, so that $A\leq Q$ and $\psi$ is an extension of $\varphi$.  It follows that $\psi P=\varphi P=P$ and $\psi|_P=\varphi|_P\in K$, so $\omega\in N_\Omega^K(P)$, as claimed.  $\Omega$ is $\cF$-stable, so  $N_\Omega^K(P)$ is $\cF$-stable on those twisted diagonal subgroups $(A,\varphi)$ such that $P\leq A$.

In general, given $A\leq N$ and an isomorphism $\varphi\in\cN_\iso(A,B)$, we consider the set $\left\{\widetilde\varphi_i\in\cN_\iso(PA,PB)\right\}_{i=1}^{n}$ of extensions of $\varphi$ to an isomorphism with source $PA$ (which must necessarily have target $PB$).  We then claim
\[
(N_\Omega^K(P))^{(A,\varphi)}=\coprod_{i=1}^n \Omega^{(PA,\widetilde\varphi_i)}.
\]

The union is disjoint:  If there are $1\leq i, j\leq n$ such that $\omega\in\Omega^{(PA,\widetilde\varphi_i)}\cap\Omega^{(PA,\widetilde\varphi_j)}$, then for all $x\in PA$, $
\tilde\ph_i(x)\cdot\omega\cdot x^{-1}=\omega=\tilde\ph_j(x)\cdot\omega\cdot x^{-1}$.  The left $S$-action on $\Omega$ is free, so $\widetilde\varphi_i(x)=\widetilde\varphi_j(x)$ for all $x\in PA$, hence $i=j$.

The equality holds:  For $\omega\in\Omega^{(PA,\widetilde\varphi_i)}$,  we have $\omega\in N_\Omega^K(P)$ because $\widetilde\varphi_iP=P$ and $\widetilde\varphi_i|_P\in K$. $(A,\varphi)\leq(PA,\widetilde\varphi_i)$, implies $\omega\in (N_\Omega^K(P))^{(A,\varphi)}$. Conversely, if $\omega\in (N_\Omega^K(P))^{(A,\varphi)}$, $\Stab_{S\x S}(\omega)=(Q,\psi)\leq S\times S$, then by definition of $N_\Omega^K(P)$ we have $P\leq Q$, $\psi P=P$, and $\psi|_P\in K$.  Thus $\psi(PA)\leq N$ and $\psi\big|_{PA}\in\cN_\iso(PA,PB)$ is an extension of $\varphi$.  Therefore there is some $\widetilde\varphi_i$ such that $\omega\in\Omega^{(PA,\widetilde\varphi_i)}$, proving the reverse containment.

Putting these claims together:
\begin{multline*}
\abs*{(N_\Omega^K(P))^{(A,\varphi)}}=\abs*{\coprod_{i=1}^n\Omega^{(PA,\widetilde\varphi_i)}}=\sum_{i=1}^n\abs*{\Omega^{(PA,\widetilde\varphi_i)}}
=n\cdot\abs*{\Omega^{(PA,\iota_{PA}^S)}}
\\=n\cdot\abs*{(N_\Omega^K(P))^{(PA,\iota_{PA}^N)}}.
\end{multline*}
The third equality uses the $\cF$-stability of $\Omega$; the fourth our  observation that $N_\Omega^K(P)$ is $\cN$-stable on those subgroups that contain $P$.  Note in particular that we have described $\abs*{(N_\Omega^K(P))^{(A,\varphi)}}$ as depending solely on the number of extensions $n$ of $\varphi$ to an isomorphism in $\cN$ with source $PA$.  By Lemma \ref{the number of extensions in normalizer subsystems depend only on the source}, this number depends not on $\varphi\in\cN_\iso(A,B)$, but only on the source $A$.  It follows that $\abs*{(N_\Omega^K(P))^{(A,\varphi)}}=\abs*{(N_\Omega^K(P))^{(A,\iota_A^N)}}$.  The dual result of Lemma \ref{the number of extensions in normalizer subsystems depend only on the source} implies that the number $n$ also can be seen to depend only on the target of the isomorphism; as $\id_A$ and $\varphi^{-1}\in\cN_\iso(\varphi A,A)$ have the same target, it follows that $\abs*{(N_\Omega^K(P))^{(A,\iota_A^N)}}=\abs*{(N_\Omega^K(P))^{(\varphi A,\varphi^{-1})}}$, and the $\cN$-stability of $N_\Omega^K(P)$ is proved.
\end{proof}

Aside:  The method of the proof of Proposition \ref{K-normalizers of semicharacteristics bisets are semicharacteristic} can be used to prove the following useful structure theorem for the minimal $\cF$-characteristic biset $\Lambda_\cF$:

\begin{prop}\label{prop:OpF contained in stabilizers}
If $(Q,\psi)$ is a point-stabilizer of $\Lambda_\cF$, then $O_p(\cF)\leq Q$.
\end{prop}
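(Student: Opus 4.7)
The plan is to imitate the $K$-normalizer argument of Proposition \ref{K-normalizers of semicharacteristics bisets are semicharacteristic}, specialized to $P := O_p(\cF)$ and $K := \Aut(O_p(\cF))$.  Write $O := O_p(\cF)$.  Since $O \trianglelefteq S$ we have $N_S(O) = S$ and hence $N_S^K(O) = S$; since $O$ is normal in $\cF$, every $\varphi \in \cF(A,B)$ has an extension $\widetilde\varphi \in \cF(AO, BO)$ with $\widetilde\varphi(O) = O$, so $N_\cF^K(O) = \cF$.  Thus Proposition \ref{K-normalizers of semicharacteristics bisets are semicharacteristic} tells us that the subset
\[
\Lambda' := N_{\Lambda_\cF}^K(O) = \bigl\{\omega \in \Lambda_\cF \Mid \Stab_{S \times S}(\omega) = (Q, \psi),\ O \leq Q,\ \psi(O) = O\bigr\}
\]
is an $\cF$-semicharacteristic $(S,S)$-biset.

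The strategy is now to upgrade $\Lambda'$ to an $\cF$-characteristic biset: once this is done, the minimality of $\Lambda_\cF$ from Corollary \ref{minimal characteristic biset} forces $\Lambda_\cF \subseteq \Lambda'$, hence $\Lambda_\cF = \Lambda'$, which is precisely the assertion that every point-stabilizer $(Q,\psi)$ of $\Lambda_\cF$ satisfies $O \leq Q$ (and incidentally $\psi(O) = O$).

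The only remaining task is the Sylow bound $\abs{\Lambda'}/\abs{S} \not\equiv 0 \pmod p$.  Using Theorem \ref{parameterization of semicharacteristic bisets} write
\[
\Lambda_\cF = \Omega_S = \coprod_{[\alpha] \in \Out_\cF(S)} [S, \alpha] \;\amalg\; \coprod_{\substack{\abs{P} < \abs{S} \\ \varphi \in \cF(P,S)}} c_{P,\varphi}\, [P, \varphi].
\]
For each orbit $[P,\varphi]$ with $\abs{P} < \abs{S}$, one has $\abs{[P,\varphi]}/\abs{S} = \abs{S}/\abs{P} \equiv 0 \pmod p$, so these contribute nothing modulo $p$ regardless of whether they lie in $\Lambda'$.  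For the full-sized orbits I claim each $[S,\alpha]$ lies in $\Lambda'$: every $\alpha \in \Aut_\cF(S)$ admits, by normality of $O$, an extension $\widetilde\alpha \in \cF(SO, SO) = \cF(S, S)$ with $\widetilde\alpha(O) = O$, and since $\widetilde\alpha$ agrees with $\alpha$ on $S$ we must have $\widetilde\alpha = \alpha$, whence $\alpha(O) = O$.  Therefore $\abs{\Lambda'}/\abs{S} \equiv \abs{\Out_\cF(S)} \not\equiv 0 \pmod p$, where the final nonvanishing comes from the saturation axioms for $\cF$.

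I do not anticipate a serious obstacle: the only subtle point is the claim that an $\cF$-automorphism of $S$ automatically preserves $O$, which follows from the trivial observation that the $O$-fixing extension guaranteed by normality of $O$ has nowhere to go and must equal the original automorphism.
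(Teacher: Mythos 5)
Your proof is correct, and while it uses the same key lemma as the paper, it assembles the ingredients in a genuinely different order. The paper argues bottom-up: it runs the stabilization process of Theorem \ref{F-basis} starting from $[S,\id_S]$ but indexed only on the subgroups containing $O_p(\cF)$, and then verifies directly that the resulting biset $\Xi$ is $\cF$-stable on \emph{all} twisted diagonals --- the verification being exactly the decomposition $\Xi^{(A,\varphi)}=\coprod_i\Xi^{(O_p(\cF)\cdot A,\widetilde\varphi_i)}$ combined with Lemma \ref{the number of extensions in normalizer subsystems depend only on the source} --- whence $\Xi=\Lambda_\cF$. You instead work top-down: you carve the subset $N_{\Lambda_\cF}^{K}(O_p(\cF))$ out of $\Lambda_\cF$ and invoke Proposition \ref{K-normalizers of semicharacteristics bisets are semicharacteristic} as a black box; this is legitimate, since that proposition precedes the present one, requires no full-normalization or saturation hypotheses on the pair $(P,K)$, and in your degenerate case $N_S^K(O_p(\cF))=S$ and $N_\cF^K(O_p(\cF))=\cF$, so the normalizer subbiset is an $\cF$-semicharacteristic $(S,S)$-subbiset of $\Lambda_\cF$. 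The one piece of extra work your route demands --- checking the Sylow condition, which reduces to the observation that every $\alpha\in\Aut_\cF(S)$ preserves $O_p(\cF)$ so that all orbits $[S,\alpha]$ survive into the subbiset --- you handle correctly, and then minimality (Corollary \ref{minimal characteristic biset}) plus a cardinality count closes the sandwich. Your approach buys economy (no need to repeat the stabilization construction or the proof of Proposition \ref{K-normalizers of semicharacteristics bisets are semicharacteristic}) and yields the slightly stronger conclusion that every point-stabilizer $(Q,\psi)$ also satisfies $\psi(O_p(\cF))=O_p(\cF)$; the paper's version buys an explicit description of $\Lambda_\cF$ as the output of a stabilization process run only over subgroups containing $O_p(\cF)$.
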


\begin{proof}
Let $\Xi$ be the $(S,S)$-biset obtained by applying the $(\cF\times\cF)$-stabilization process of Theorem \ref{F-basis} to $[S,\id_S]$  for the subgroups containing $O_p(\cF)$.  We will  show that $\Xi$ is $\cF$-stable, hence $\Xi=\Lambda_\cF$ and the result will follow.

So we must show for $A\leq S$ and $\varphi\in\cF_\iso(A,B)$ the following equalities:
\[
\abs*{\Xi^{(A,\varphi)}}=\abs*{\Xi^{(A,\iota_A^S)}}=\abs*{\Xi^{(\varphi A,\varphi^{-1})}}.
\]
If $O_p(\cF)\leq A$, these equalities hold by construction of $\Xi$.  Otherwise let $\widetilde\varphi_i$, $i=1,\ldots,n$, be the distinct extensions of $\varphi$ to elements of $\cF_\iso(O_p(\cF)\cdot A,O_p(\cF)\cdot B)$.  As in the proof of Proposition \ref{K-normalizers of semicharacteristics bisets are semicharacteristic} we can write $\Xi^{(A,\varphi)}=\coprod_{i=1}^n\Xi^{(PA,\widetilde\varphi_i)}$, so
\[
\abs*{\Xi^{(A,\varphi)}}=\sum_{i=1}^n\abs*{\Xi^{(PA,\widetilde\varphi_i)}}=n\cdot\abs*{\Xi^{(PA,\iota_{PA}^S)}},
\]
which depends only on the source $A$ by Lemma \ref{the number of extensions in normalizer subsystems depend only on the source}.  Therefore $\abs*{\Xi^{(A,\varphi)}}=\abs*{\Xi^{(A,\iota_A^S)}}$.  Dually we can show that the fixed-point order depends only on the target of the isomorphism in question, so $\abs*{\Xi^{(A,\iota_A^S)}}=\abs*{\Xi^{(\varphi A,\varphi^{-1})}}$.  This proves the result.
\end{proof}

Back on track:  We haven't made use of the saturation of $\cF$ yet in this section; now we will need to in order to guarantee the existence of characteristic bisets for $\cF$, in particular the unique minimal $\cF$-characteristic biset $\Lambda_\cF$ for $\cF$.

\begin{prop}\label{good K-normalizer subbisets of minimal characteristic bisets contain exactly one minimal characteristic biset}
Let $\Omega:=\Lambda_\cF$ be the minimal characteristic biset for $\cF$, and let $P\leq S$ fully $K$-normalized in $\cF$ for $K\leq\Aut(P)$.  If $K\leq\Inn(P)$ or $K\geq\Inn(P)$, then $N_\Omega^K(P)$ contains precisely one $(N,N)$-orbit isomorphic to $[N,\id_N]$.
\end{prop}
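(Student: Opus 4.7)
The plan is to exploit the $\cN$-semicharacteristic structure of $N_\Omega^K(P)$ from Proposition \ref{K-normalizers of semicharacteristics bisets are semicharacteristic}. By Theorem \ref{parameterization of semicharacteristic bisets} applied to $\cN$, we obtain a unique decomposition $N_\Omega^K(P)=\coprod_{(R)_\cN} c_R^\cN\cdot\Omega_R^\cN$. Orbits of $(N,N)$-type $[N,\id_N]$ occur only in the top basis piece $\Omega_N^\cN$, and with multiplicity one there by condition (i) of Theorem \ref{F-basis}, since every other $\Omega_R^\cN$ has point-stabilizers $\cN$-subconjugate to $R$ with $|R|<|N|$. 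Thus the count we want equals $c_N^\cN$, and the task is to show $c_N^\cN=1$.

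For existence ($c_N^\cN\geq 1$) I would use that $\Omega=\Lambda_\cF=\Omega_S$ contains the $(S,S)$-orbit $[S,\id_S]$ exactly once: picking $\omega_0$ in this orbit with $\Stab_{S\times S}(\omega_0)=(S,\id_S)$, the conditions defining $N_\Omega^K(P)$ hold trivially (using $\id_P\in K$), and Lemma \ref{stabilizers of points in automizers} gives $\Stab_{N\times N}(\omega_0)=(N,\id_N)$. For uniqueness ($c_N^\cN\leq 1$) I would count the fixed-point set $\fX:=\{\omega\in N_\Omega^K(P) \mid \Stab_{N\times N}(\omega)=(N,\id_N)\}$ and show $|\fX|=|Z(N)|$; a direct check gives that every $(N,N)$-orbit of type $[N,\id_N]$ contributes exactly $|Z(N)|$ points to $\fX$, so $c_N^\cN=|\fX|/|Z(N)|$. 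The case hypothesis splits the analysis: when $K\geq\Inn(P)$ we have $P\leq N$ and the $N_\Omega^K(P)$-conditions become automatic, so $\fX=\Omega^{(N,\id_N)}$; moreover one uses $C_S(N)=Z(N)$, because any $x\in C_S(N)$ centralizes $P\leq N$, so $x\in C_S(P)\leq N_S^K(P)=N$, giving $x\in N\cap C_S(N)=Z(N)$. When $K\leq\Inn(P)$, the condition $\psi|_P\in K\leq\Inn(P)$ forces $\psi|_P=c_p|_P$ for some $p\in P$, and compatibility with $\psi|_N=\id_N$ on $P\cap N=K'$ (the preimage of $K$ in $P$) forces $p\in Z(K')$, again yielding a controllable count.

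The hard part is ruling out spurious contributions to $|\fX|$ from $(S,S)$-orbits of $\Omega$ other than $[S,\id_S]$ --- specifically from $[S,\alpha]$ with $\alpha$ nontrivial in $\Out_\cF(S)$ but $\alpha|_N\in\Hom_S(N,S)$, and from orbits $[Q,\psi]$ with $N\leq Q<S$. I expect this step to invoke the minimality of $\Lambda_\cF$: any excess $[N,\id_N]$-orbit in $N_\Omega^K(P)$ would, tracing back through the parameterization of Theorem \ref{parameterization of semicharacteristic bisets}, force $\Lambda_\cF$ to contain $(S,S)$-orbits beyond those dictated by Theorem \ref{F-basis}, contradicting Corollary \ref{minimal characteristic biset}. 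The exclusion of $K$ strictly intermediate between $\Inn(P)$ and $\Aut(P)$ is likely needed precisely because for such $K$ the compatibility constraints relating $\psi|_N$ and $\psi|_P$ are not tight enough to pin down these potential spurious contributions.
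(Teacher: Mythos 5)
Your reduction to counting the set $\fX$ of points with $(N\times N)$-stabilizer exactly $(N,\id_N)$, and your existence argument via the unique orbit $[S,\id_S]\subseteq\Lambda_\cF$, are both sound. But the uniqueness half is where the entire content of the proposition lies, and there you only write that you ``expect'' to invoke the minimality of $\Lambda_\cF$; as stated this is circular, since the potential spurious contributions you correctly identify (points in orbits $[Q,\psi]$ with $N\leq Q$, $\psi|_N=\id_N$, but $(Q,\psi)$ not conjugate to $(S,\id_S)$) are not obviously in conflict with the parameterization of Theorem \ref{parameterization of semicharacteristic bisets}. The missing idea is $\cF$-centricity: since $P$ is fully $K$-normalized it is fully $\cF$-centralized (Proposition \ref{K-normalizer basics}), so $P\cdot C_S(P)$ is $\cF$-centric; in the case $\Inn(P)\leq K$ (or $K=\{\id_P\}$) any $\omega\in\fX$ has $(S\times S)$-stabilizer $(Q,\psi)$ with $P\cdot C_S(P)\leq Q$ and $\psi|_{P\cdot C_S(P)}=\id$, so Puig's rigidity result (Proposition \ref{every morphism in the centric orbit category is epi reason}) forces $\psi=c_z|_Q$ for some central $z$, hence $[Q,\psi]\cong[Q,\iota_Q^S]$ with $Q$ $\cF$-centric, and the explicit description of the centric part of $\Lambda_\cF$ (Theorem \ref{minimal centric biset structure}) leaves $Q=S$ as the only possibility. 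Without this chain the orbits $[S,\alpha]$ and $[Q,\psi]$ you worry about are not actually excluded; likewise your assertion that $|\fX|=|Z(N)|$ when $K\geq\Inn(P)$ needs $N$ to be $\cF$-centric (it is, being an overgroup of $P\cdot C_S(P)$), which you never invoke.

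Two further points. For $K\leq\Inn(P)$ your sketch (``compatibility forces $p\in Z(K')$, again yielding a controllable count'') does not close the case; the paper instead proves that the inclusion $N_\Omega^K(P)\subseteq N_\Omega^{K\cdot\Inn(P)}(P)$ induces a bijection on orbits (Proposition \ref{orbits of K-normalizers are unchanged by Inn(P)}) and thereby reduces $K\leq\Inn(P)$ to the already-settled centralizer case $K=\{\id_P\}$ --- a genuinely different mechanism from a direct count. Finally, your closing remark misreads the hypothesis: subgroups $K$ with $\Inn(P)\lneq K\lneq\Aut(P)$ are \emph{not} excluded (they satisfy $K\geq\Inn(P)$); what is excluded, and what Counterexample \ref{Q_8 semidirect Z/3 counterexample} shows must be excluded, are the $K$ incomparable with $\Inn(P)$.
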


\begin{proof} We consider two cases.

(1)  $K=\{\id_P\}$ or $\Inn(P)\leq K$.

Fix $\omega\in N_\Omega^K(P)$, $\Stab_{N\x N}(\omega)=(N,\id)$ and $\Stab_{S\times S}(\omega)=(Q,\psi)$,  so $N\leq Q$ and $\psi|_N=\id_N$.  If $K=\{\id_P\}$, the definition of $C_\Omega(P)=N_\Omega^{\{\id_P\}}(P)$ shows we must also have $P\leq Q$ and $\psi|_P=\id_P$.  If $\Inn(P)\leq K$, the definition of $N_S^K(P)$ implies that $P\cdot C_S(P)\leq N$.  In either case, $P\cdot C_S(P)\leq Q$ and $\psi|_{P\cdot C_S(P)}=\id_{P\cdot C_S(P)}$.

As $P$ is fully $K$-normalized in $\cF$, it is fully $\cF$-centralized by Proposition \ref{K-normalizer basics}, so \cite[Proposition A.7]{BLO2} implies that  $P\cdot C_S(P)$ is $\cF$-centric.  As $\id_Q$ and $\psi$ both restrict to the same automorphism of $P\cdot C_S(P)$, Proposition \ref{every morphism in the centric orbit category is epi reason} says that there is some $z\in Z(P\cdot C_S(P))$ such that $\psi=\id_Q \circ c_z|_Q=c_z|_Q$.  Since $z\in S$, the $(S,S)$-bisets $[Q,c_z]$ and $[Q,\iota_Q^S]$ are isomorphic.  $Q$ is $\cF$-centric and $\Omega$ is minimal, so Theorem \ref{minimal centric biset structure} forces $Q=S$.

Thus all $\omega\in N_\Omega^K(P)$ with $\Stab_{N\x N}(\omega)=(N,\id)$ live in the same $(S,S)$-orbit $[S,\id_S]$, otherwise known as $S$ with its natural $(S,S)$-biset structure.  The subset of $_SS_S$ that lies in $N_\Omega(P)$ is  $N_S^K(P)=N$, so all such points of $N_\Omega^K(P)$ lie in the same $(N,N)$-orbit.

(2) $K\leq\Inn(P)$.

Before dealing with the nonidentity subgroups of $\Inn(P)$, we take a small detour to compare two different $K$-normalizers and their relation:  Let $K\leq\Aut(P)$ be arbitrary with $P$  fully $K$-normalized in $\cF$, and set  $L:=K\cdot\Inn(P)$.  Note that $\Inn(P)\trianglelefteq\Aut(P)$, so $L$ is in fact the product of $K$ and $\Inn(P)$, not merely the subgroup generated by the two.  We have $N_S^L(P)=P\cdot N_S^K(P)$.

Now, consider the natural inclusion $\iota\colon N_\Omega^K(P)\subseteq N_\Omega^L(P)$.  This is an $(N_S^K(P),N_S^K(P))$-equivariant map of bisets, hence $\iota$ induces a map on orbits \[\overline\iota\colon (N_S^K(P)\backslash N_\Omega^K(P)/N_S^K(P))\to(N_S^L(P)\backslash N_\Omega^L(P)/N_S^L(P)).\]
We claim that $\overline\iota$ is a bijection.

\emph{$\overline\iota$ is surjective}:  Suppose that $\omega\in N_\Omega^L(P)$,  $\Stab_{S\x S}(\omega)=(Q,\psi)$.  We have $P\leq Q$, $\psi P=P$, and $\psi|_P=\kappa\circ c_a$ for $\kappa\in K$ and $a\in P$.  By Remark \ref{effect of translation on twisted graph stabilizers}, the point $\omega\cdot a^{-1}$ has stabilizer $(\lc a Q,\psi\circ c_a^{-1})$ with $P\leq\lc a Q$, $(\psi\circ c_a^{-1}) P=P$, and $(\psi\circ c_a^{-1})|_P=\kappa\circ c_a\circ c_a^{-1}=\kappa\in K$.  Thus $\omega\cdot a^{-1}\in N_\Omega^K(P)$, and as $a\in P\leq N_S^L(P)$, we see $\overline\iota$ is surjective on orbits.

\emph{$\overline\iota$ is injective}:  Suppose that $\omega_1,\omega_2\in N_\Omega^K(P)$ have $(S,S)$-stabilizers $(Q_i,\psi_i)$, $i=1,2$.  We again have $P\leq Q_i$, $\psi_i P=P$, and $\psi_i|_P\in K$.  If $\omega_1$ and $\omega_2$ lie in the same $(N_S^L(P),N_S^L(P))$-orbit, there are elements $a,b\in N_S^L(P)$ such that $\omega_2=a\cdot\omega_1\cdot b$.  Since $N_S^L(P)=P\cdot N_S^K(P)$ we may write $b=p\cdot n$ for $n\in N_S^K(P)$ and $p\in P$.  As $P\leq Q_1$, we can write $\omega_2=a\cdot\omega_1\cdot p\cdot n=(a\psi_1(p))\cdot \omega_1\cdot n$.  By Remark \ref{effect of translation on twisted graph stabilizers}, the $(S,S)$-stabilizer of $(a\psi_1(p))\cdot \omega_1\cdot n$ is $\bigl((Q_1)^n,c_{a\psi_1(p)} \circ\psi_1\circ c_n\bigr)$.   We already have $(\psi_1\circ c_n)|_P\in K$, and the entire composite must restrict to an automorphism of $P$ that lies in $K$ because $\omega_2\in N_\Omega^K(P)$.  This forces $c_{a\psi_1(p)}|_P\in K$, or $a\psi_1(p)\in N_S^K(P)$.  Thus $\omega_1$ and $\omega_2$ live in the same $(N_S^K(P),N_S^K(P))$-orbit, and injectivity is proved.

In fact, we have shown more:  Given any subgroup $H\leq\Aut(P)$ of automorphisms such that $H\leq K\leq L:=H\cdot\Inn(P)=K\cdot\Inn(P)$, we have that the inclusions $N_\Omega^H(P)\subseteq N_\Omega^L(P)$ and $N_\Omega^K(P)\subseteq N_\Omega^L(P)$ both induce bijections on orbits, so in fact the third natural inclusion $N_\Omega^H(P)\subseteq N_\Omega^K(P)$ must induce a bijection on orbits as well.

In particular, consider the case that $H=\{\id_P\}$, $L=\Inn(P)$, and $K\leq\Inn(P)$ is arbitrary.  Then $N_S^H(P)=C_S(P)$, and we've already seen that there is a unique $(C_S(P),C_S(P))$-orbit of $C_\Omega(P)$ with stabilizer $(C_S(P),\id_{C_S(P)})$.  There is some $\omega\in\Omega$ that has $(S,S)$-stabilizer $(S,\id)$, so $\omega\in C_\Omega(P)\subseteq N_\Omega^K(P)$ and has $(N,N)$-stabilizer $(N,\id_N)$ as an element of $N_\Omega^K(P)$.  Suppose that there is some other $\omega'\in N_\Omega^K(P)$ with $(N,N)$-stabilizer $(N,\id_N)$.  Then $\omega'\in C_\Omega(P)$ and has $(C_S(P),C_S(P))$-stabilizer $(C_S(P),\id_{C_S(P)})$, and as we have already proved our result for the centralizer biset, we conclude that $\omega$ and $\omega'$ must lie in the same $(C_S(P),C_S(P))$-orbit, and hence in the same $(N,N)$-orbit as well.  This proves the result for arbitrary subgroups of $\Inn(P)$.
\end{proof}

In the course of the proof of Proposition \ref{good K-normalizer subbisets of minimal characteristic bisets contain exactly one minimal characteristic biset} we made use of the following interesting fact, which we record here for ease of reference:

\begin{prop}\label{orbits of K-normalizers are unchanged by Inn(P)}
Let $\Omega$ be a semicharacteristic biset for $\cF$, $P$ a subgroup of $S$, and $H,K\leq\Aut(P)$ two groups of automorphisms satisfying
\[H\leq K\leq H\cdot\Inn(P)=K\cdot\Inn(P).\]
Then the natural inclusion $N_\Omega^H(P)\subseteq N_\Omega^K(P)$ induces a bijection on orbits:
\[(N_S^H(P)\backslash N_\Omega^H(P)/N_S^H(P))\cong(N_S^K(P)\backslash N_\Omega^K(P)/N_S^K(P)).\]
\end{prop}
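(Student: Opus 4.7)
The plan is to recognize that this statement follows essentially from a sub-argument already carried out inside the proof of Proposition \ref{good K-normalizer subbisets of minimal characteristic bisets contain exactly one minimal characteristic biset}. Set $L := H\cdot\Inn(P) = K\cdot\Inn(P)$; then $\Inn(P)\leq L$, so $P\leq N_S^L(P)$, and an easy calculation using $c_n|_P = \eta\circ c_a$ with $\eta\in H$ (resp.\ $K$) and $a\in P$ gives $N_S^L(P) = P\cdot N_S^H(P) = P\cdot N_S^K(P)$. We have the chain of subbiset inclusions
\[
N_\Omega^H(P) \subseteq N_\Omega^K(P) \subseteq N_\Omega^L(P).
\]

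First I would observe that the surjectivity/injectivity argument used in the preceding proposition to prove that the inclusion $N_\Omega^K(P)\subseteq N_\Omega^L(P)$ induces a bijection on double-coset orbits rests only on the factorization $L = K\cdot\Inn(P)$ and on Remark \ref{effect of translation on twisted graph stabilizers} governing how point-stabilizers transform under $S$-translation. In particular, it uses neither the minimality of $\Omega$, nor that $P$ is fully $K$-normalized, nor the extra hypothesis $K\leq\Inn(P)$ that appears in the ambient proposition. Applying that same argument verbatim to the pair $(H, L)$---legitimate because $H\cdot\Inn(P) = L$ by hypothesis---therefore also yields a bijection
\[
(N_S^H(P)\backslash N_\Omega^H(P)/N_S^H(P)) \longrightarrow (N_S^L(P)\backslash N_\Omega^L(P)/N_S^L(P)).
\]

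Finally, the triangle
\[
\xymatrix{
N_S^H(P)\backslash N_\Omega^H(P)/N_S^H(P) \ar[rr] \ar[drr] & & N_S^K(P)\backslash N_\Omega^K(P)/N_S^K(P) \ar[d] \\
 & & N_S^L(P)\backslash N_\Omega^L(P)/N_S^L(P)
}
\]
induced by the three inclusions commutes, and both the diagonal and the right-hand vertical are bijections by the previous step. A standard two-out-of-three argument then forces the top horizontal map to be a bijection as well, which is exactly the content of the proposition. The main obstacle, such as it is, is purely cosmetic: one must carefully re-read the surjectivity and injectivity arguments from the preceding proposition to confirm that they never secretly invoke the auxiliary hypotheses of that proposition, so that they legitimately apply to the general pair $(H, L)$ (and to any $(S,S)$-biset $\Omega$ rather than only to $\Lambda_\cF$). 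Once this verification is in hand, no additional ideas are required.
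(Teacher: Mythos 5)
Your proposal is correct and is essentially identical to the paper's own argument: the paper records this proposition precisely because it was established in the "detour" within part (2) of the proof of Proposition \ref{good K-normalizer subbisets of minimal characteristic bisets contain exactly one minimal characteristic biset}, where the surjectivity/injectivity arguments for the inclusion into $N_\Omega^L(P)$ (with $L=K\cdot\Inn(P)$) use only the factorization $N_S^L(P)=P\cdot N_S^K(P)$ and Remark \ref{effect of translation on twisted graph stabilizers}, followed by exactly your two-out-of-three deduction for $H\leq K\leq L$. Your verification that those arguments invoke neither minimality of $\Omega$ nor full $K$-normalization is the right (and only) point needing care, and it checks out.
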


\begin{remark}
We could use Propositions \ref{good K-normalizer subbisets of minimal characteristic bisets contain exactly one minimal characteristic biset} and \ref{orbits of K-normalizers are unchanged by Inn(P)} to reprove Puig's main theorem on $K$-normalizers (cf. \cite[Proposition 21.11]{PuigBook}):  If $\Omega$ is a characteristic biset for $\cF$ with $P\leq S$ and $K\leq\Aut(P)$ given so that $P$ is fully $K$-normalized in $\cF$, then $N_\Omega^K(P)$ is a characteristic biset for $N_\cF^K(P)$.

[Sketch of proof:  $N_\Omega^K(P)$ is always $\cN$-semicharacteristic by Proposition \ref{K-normalizers of semicharacteristics bisets are semicharacteristic}, so we only need show that $p\nmid\abs{N_\Omega^K(P)}/\abs{N_S^K(P)}$.  In the case that $K$ contains or is contained in $\Inn(P)$, this is a direct calculation based on Proposition \ref{good K-normalizer subbisets of minimal characteristic bisets contain exactly one minimal characteristic biset}; in the general case one can use Proposition \ref{orbits of K-normalizers are unchanged by Inn(P)} to show $\abs{N_\Omega^K(P)}/\abs{N_S^K(P)}=\abs{N_\Omega^{L}(P)}/\abs{N_S^L(P)}$ where $L:=K\cdot\Inn(P)$, and that $P$'s being fully $K$-normalized in $\cF$ implies that it is also fully  $L$-normalized.  From this the result follows.]

In particular, the existence of a $N_\cF^K(P)$-characteristic biset implies that $N_\cF^K(P)$ is a saturated fusion system.  There is little gained by reproving this result in detail; instead we will assume it and derive the following more precise formulation.
\end{remark}

\begin{theorem}\label{K-normalizers of minimal are minimal}
Suppose that $\Omega=\Lambda_\cF$ is the minimal characteristic biset for $\cF$.  Suppose $P\leq S$ and $K\leq\Aut(S)$ such that $K$ either contains or is contained in $\Inn(P)$.

If $P$ is fully $K$-normalized in $\cF$, then $N_\Omega^K(P)$ is a characteristic $(N,N)$-biset for $\cN=N_\cF^K(P)$ that contains precisely one copy of $\Lambda_{\cN}$, the minimal characteristic biset for $\cN$.

Moreover, if $P$ is $\cF$-centric, then $N_\Omega^K(P)=\Lambda_\cN$.
\end{theorem}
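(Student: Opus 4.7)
The proof has two parts. Part I establishes that $N_\Omega^K(P)$ is $\cN$-characteristic and contains precisely one copy of $\Lambda_\cN$; this combines the preparatory propositions of this section with the basis parameterization of Theorem \ref{parameterization of semicharacteristic bisets} applied to $\cN$. Part II uses $\cF$-centricity of $P$ to eliminate all extraneous basis summands, with the crucial lever being the extension rigidity of Corollary \ref{extensions in centric fusion system are unique up to conjugation by central elements}.

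\emph{Part I.} By Proposition \ref{K-normalizers of semicharacteristics bisets are semicharacteristic}, $N_\Omega^K(P)$ is $\cN$-semicharacteristic. Applying Theorem \ref{parameterization of semicharacteristic bisets} to $\cN$, write
\[
N_\Omega^K(P) \;=\; \coprod_{(Q)_\cN \in Cl(\cN)} c_Q \cdot \Omega_Q^{\cN},
\]
where $\Omega_N^{\cN} = \Lambda_\cN$ is the basis element corresponding to the $\cN$-class of $N$ itself. For $Q \lneq N$, every orbit of $\Omega_Q^{\cN}$ has source of order at most $\abs{Q} < \abs{N}$, so no such basis element contributes any $(N,N)$-orbit of type $[N,\id_N]$; meanwhile each copy of $\Omega_N^{\cN}$ contributes precisely one. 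Proposition \ref{good K-normalizer subbisets of minimal characteristic bisets contain exactly one minimal characteristic biset} tells us that $N_\Omega^K(P)$ contains exactly one $(N,N)$-orbit of that type, so $c_N = 1$. In particular $p \nmid c_N$, so $N_\Omega^K(P)$ is $\cN$-characteristic and contains $\Lambda_\cN$ as a summand with multiplicity one.

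\emph{Part II.} Now assume $P$ is $\cF$-centric. Since $\Omega_Q^{\cN}$ is the unique basis element supplying an $(N,N)$-orbit of type $[Q,\iota_Q^N]$, it suffices to show that $N_\Omega^K(P)$ has no such orbit whenever $Q \lneq N$. Suppose toward contradiction that $\omega \in N_\Omega^K(P)$ has $(N,N)$-stabilizer $(Q,\iota_Q^N)$, and let $(Q',\psi)$ denote its $(S,S)$-stabilizer. Membership in $N_\Omega^K(P)$ forces $P \leq Q'$, and the $\cF$-centricity of $P$ propagates to $Q'$: indeed $C_S(Q') \leq C_S(P) \leq P \leq Q'$, and analogously for $\cF$-conjugates of $Q'$. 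Thus $\omega$ lies in the $\cF$-centric part of $\Omega = \Lambda_\cF$, and by Theorem \ref{minimal centric biset structure}, $\psi$ is a nonextendable $\cF$-isomorphism between $\cF$-centric subgroups. On the other hand, Lemma \ref{stabilizers of points in automizers} gives $Q' \cap N = Q$ and $\psi|_Q = \id_Q$, so $\psi$ is a nonextendable $\cF$-extension of the inclusion $\iota_Q^S\colon Q \to S$. Since $\id_S$ is also a nonextendable $\cF$-extension of $\iota_Q^S$, Corollary \ref{extensions in centric fusion system are unique up to conjugation by central elements} forces $Q' = S$, whence $Q = S \cap N = N$, contradicting $Q \lneq N$. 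Hence $c_Q = 0$ for all proper $Q \lneq N$, and $N_\Omega^K(P) = \Lambda_\cN$.

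\emph{Main obstacle.} The key input is Puig's extension rigidity (Corollary \ref{extensions in centric fusion system are unique up to conjugation by central elements}), which pins down the sources of nonextendable $\cF$-extensions of a morphism out of an $\cF$-centric subgroup. The most delicate step in applying it is justifying that the $(S,S)$-stabilizer $\psi$ appearing above is itself nonextendable in $\cF$; this rests on the orbit description of Theorem \ref{minimal centric biset structure} combined with the elementary observation that $(S \times S)$-conjugation of a twisted diagonal preserves nonextendability of the underlying isomorphism (up to relabeling its source and target). Conceptually, $\cF$-centricity of $P$ is the precise hypothesis ensuring that the only way to realize an extra $[Q,\iota_Q^N]$-orbit would be via a full-source extension of $\id_Q$, which rigidity rules out; this kills every $\Omega_Q^{\cN}$ with $Q \lneq N$ and collapses $N_\Omega^K(P)$ onto $\Lambda_\cN$.
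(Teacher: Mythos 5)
Your proof is correct and follows essentially the same route as the paper: Part I is the paper's argument verbatim (semicharacteristicity via Proposition \ref{K-normalizers of semicharacteristics bisets are semicharacteristic}, the basis decomposition of Theorem \ref{parameterization of semicharacteristic bisets} for $\cN$, and the unique $[N,\id_N]$-orbit from Proposition \ref{good K-normalizer subbisets of minimal characteristic bisets contain exactly one minimal characteristic biset}), and Part II reaches $Q'=S$ by the same mechanism, merely invoking the packaged rigidity statement (Corollary \ref{extensions in centric fusion system are unique up to conjugation by central elements}, applied to the two nonextendable extensions $\psi$ and $\id_S$ of $\iota_Q^S$) where the paper applies the underlying Proposition \ref{every morphism in the centric orbit category is epi reason} to get $\psi=c_z|_{Q'}$ and then reads off $Q'=S$ from the orbit list of Theorem \ref{minimal centric biset structure}. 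Both versions rest on the same two inputs --- Puig's extension rigidity for $\cF$-centrics and the orbit structure of the centric part of $\Lambda_\cF$ --- so the difference is only one of packaging.
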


\begin{proof}
By \cite[Proposition 21.11]{PuigBook}, $\cN$ is a saturated fusion system on $N$, hence our parameterization of semicharacteristic bisets applies.   $N_\Omega^K(P)$ is $\cN$-semicharacteristic by Proposition \ref{K-normalizers of semicharacteristics bisets are semicharacteristic}, and by Theorem \ref{parameterization of semicharacteristic bisets} the number of copies of $\Lambda_\cN$ contained in $N_\Omega^K(P)$ is equal to the number of orbits isomorphic to $[N,\id_N]$.  By Proposition \ref{good K-normalizer subbisets of minimal characteristic bisets contain exactly one minimal characteristic biset}, there is a unique such $(N,N)$-orbit, proving the first statement.

Now, suppose that $P$ is $\cF$-centric.  To show that $N_\Omega^K(P)=\Lambda_\cN$, it suffices to show that there are no other minimal $\cN$-semicharacteristic bisets beyond $\Lambda_\cN$ contained in $N_\Omega^K(P)$.  Suppose that $\omega\in N_\Omega^K(P)$ has $(N,N)$-stabilizer $(A,\iota_A^N)$, then $P\leq A\leq N$ by Proposition \ref{prop:OpF contained in stabilizers}.  Then the $(S,S)$-stabilizer of $\omega$ is $(Q,\psi)$, with $A=Q\cap N$ and $\psi|_A=\id_A$.  All groups in sight are $\cF$-centric by assumption that $P$ is, so we may use Theorem \ref{every morphism in the centric orbit category is epi reason} to conclude that $\psi=c_z|_Q$ for some $z\in Z(A)$.  Therefore $[Q,\psi]=[Q,c_z]=[Q,\iota_Q^S]$, and we know from Theorem \ref{minimal centric biset structure} that the only such orbit in $\Lambda_\cF$ when $Q$ is $\cF$-centric is $[S,\id_S]$.  We conclude that $Q=S$ and $N\leq Q$.  Therefore the only point-stabilizer of $N_\Omega^K(P)$ of the form $(A,\iota_A^N)$ is $(N,\id_N)$, so the only $\cN$-semicharacteristic bisets contained in $N_\Omega^K(P)$ are copies of $\Lambda_\cN$. As we have seen that there is exactly one of these, we have $N_\Omega^K(P)=\Lambda_\cN$, as claimed.
\end{proof}

\begin{conj}\label{P need not be F-centric for good K-normalizers of minimals to be minimal?}
$P$ need not be $\cF$-centric for the conclusions of Theorem \ref{K-normalizers of minimal are minimal} to hold:  If $\Omega=\Lambda_\cF$ is the minimal characteristic biset for $\cF$ and we are given $P\leq S$ and $K\leq\Aut(P)$ such that $K$ contains or is contained in $\Inn(P)$, and if $P$ is fully $K$-normalized in $\cF$, then $N_\Omega^K(P)=\Lambda_{\cN}$, the minimal characteristic biset for $\cN$.
\end{conj}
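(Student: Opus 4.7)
The plan is to show that, beyond the single copy of $\Lambda_\cN$ guaranteed by Proposition \ref{good K-normalizer subbisets of minimal characteristic bisets contain exactly one minimal characteristic biset}, no additional basis elements of the monoid of $\cN$-semicharacteristic bisets appear in $N_\Omega^K(P)$. Applying Theorem \ref{parameterization of semicharacteristic bisets} to the saturated fusion system $\cN := N_\cF^K(P)$ on $N := N_S^K(P)$, we may write
\[
N_\Omega^K(P) = \Lambda_\cN \amalg \coprod_{\substack{(A)_\cN \in Cl(\cN),\\ A \subsetneq N\text{ fully } \cN\text{-normalized}}} c_A \cdot \Omega_A^\cN,
\]
so the conjecture is equivalent to $c_A = 0$ for every such $A$. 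Arguing by contradiction, suppose $\omega \in N_\Omega^K(P)$ has $\Stab_{N \times N}(\omega) = (A, \iota_A^N)$ for some $A \subsetneq N$ fully $\cN$-normalized. Lemma \ref{stabilizers of points in automizers} forces $\Stab_{S \times S}(\omega) = (Q, \psi)$ with $A = Q \cap N$ and $\psi|_A = \id_A$; combined with $P \leq A$ this gives $\psi|_P = \id_P$, and Proposition \ref{prop:OpF contained in stabilizers} additionally yields $O_p(\cF) \leq Q$. The goal is to deduce $A = N$, contradicting $A \subsetneq N$.

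In the $\cF$-centric case, Proposition \ref{every morphism in the centric orbit category is epi reason} was precisely the tool that forced $\psi = c_z|_Q$ for some $z \in Z(A)$, which then let Theorem \ref{minimal centric biset structure} conclude $Q = S$. My proposed route to the general case is to replace $P$ with the $\cF$-centric subgroup $P^* := P \cdot C_S(P)$ (cf.\ \cite[Proposition A.7]{BLO2}), and to produce a suitable $K^* \leq \Aut(P^*)$ such that (i) $P^*$ is fully $K^*$-normalized in $\cF$, (ii) $N_S^{K^*}(P^*) \supseteq N$, and (iii) $N_\cF^{K^*}(P^*)$ controls $\cN$ well enough to ``pull back'' the already-proved centric version of the theorem via an iteration along the chain $P \leq P^*$. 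An auxiliary tool here is Proposition \ref{orbits of K-normalizers are unchanged by Inn(P)}: after possibly adjusting $K$ within its $\Inn(P)$-coset, one may assume either $K \leq \Inn(P)$ or $K \supseteq \Inn(P)$, narrowing the case analysis in each of the two regimes of the conjecture's hypothesis.

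The main obstacle, and the reason this remains a conjecture, is that when $P$ fails to be $\cF$-centric, Proposition \ref{every morphism in the centric orbit category is epi reason} is genuinely unavailable: a morphism $\psi \in \cF(Q, S)$ restricting to $\id_P$ need not be conjugation by an element of $Z(P)$, so $[Q, \psi]$ need not collapse to $[Q, \iota_Q^S]$, and Theorem \ref{minimal centric biset structure} no longer applies to pin down $Q$. The reduction through $P^*$ outlined above is also delicate: the candidate $K^*$ must be chosen so that the ambient Sylow $p$-groups $N$ and $N_S^{K^*}(P^*)$ match up compatibly, which is constrained by the group theory of $P^*$ relative to $P$ and is not automatic. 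Any eventual proof will therefore need to track how ``exotic'' stabilizers $(Q, \psi)$ behave inside the inductive construction of $\Lambda_\cF$ from Theorem \ref{F-basis} applied to $\cF \times \cF$, and show that the $(N, N)$-orbits they produce in $N_\Omega^K(P)$ are already matched by the analogous $\cN$-stability obligations inside $\Lambda_\cN$ rather than contributing separate basis elements $\Omega_A^\cN$. The combinatorial identification of basis coefficients from \cite{GelvinReehYalcinBasisOfBurnsideRing} appears to be the most natural setting in which to attempt such a bookkeeping argument.
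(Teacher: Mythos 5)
This statement is stated in the paper as an open conjecture: the paper offers no proof of it, and your proposal does not supply one either. What you have written is a proof \emph{plan} whose final paragraph candidly concedes that the central step is missing, so as a proof attempt it has a genuine gap --- indeed, the very gap that led the authors to label the statement a conjecture. Your setup is consistent with how the paper proves the $\cF$-centric case of Theorem \ref{K-normalizers of minimal are minimal}: Proposition \ref{K-normalizers of semicharacteristics bisets are semicharacteristic} plus Proposition \ref{good K-normalizer subbisets of minimal characteristic bisets contain exactly one minimal characteristic biset} reduce everything to showing that no point of $N_\Omega^K(P)$ has $(N,N)$-stabilizer $(A,\iota_A^N)$ with $A\lneq N$, and in the centric case the paper kills such stabilizers by invoking Proposition \ref{every morphism in the centric orbit category is epi reason} to force $\psi=c_z|_Q$ and then Theorem \ref{minimal centric biset structure} to force $Q=S$. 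You correctly identify that this rigidity statement is exactly what fails for non-centric $P$, and that is the whole content of the open problem.

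Two further cautions about the sketch itself. First, your assertion that $P\leq A$ (hence $\psi|_P=\id_P$) is not automatic: $P\leq N_S^K(P)$ only when $\Inn(P)\leq K$, so in the regime $K\leq\Inn(P)$ this step needs a separate argument (the paper's own wording at the analogous point of Theorem \ref{K-normalizers of minimal are minimal} is already under the standing assumption that $P$ is $\cF$-centric). Second, your proposed reduction through $P^*=P\cdot C_S(P)$ would have to respect the constraint illustrated by Counterexample \ref{Q_8 semidirect Z/3 counterexample}: the conclusion genuinely fails for general $K\leq\Aut(P)$, so any passage to a subgroup $K^*\leq\Aut(P^*)$ must preserve the hypothesis that $K^*$ contains or is contained in $\Inn(P^*)$, and there is no evident canonical choice doing so. In short, the proposal is a reasonable research direction but does not prove the statement, and the statement remains open in the paper.
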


\begin{counter}\label{Q_8 semidirect Z/3 counterexample}
There can be no analogue of  Conjecture \ref{P need not be F-centric for good K-normalizers of minimals to be minimal?} that completely relaxes the conditions on $K\leq\Aut(P)$ in Proposition \ref{good K-normalizer subbisets of minimal characteristic bisets contain exactly one minimal characteristic biset} and Theorem \ref{K-normalizers of minimal are minimal} and still have the conclusions hold:

Let $\ZZ/3$ act on $Q_8$ by permuting the elements $i,j,k$ cyclically.  Set $G:=Q_3\rtimes \ZZ/3$, $\cF:=\cF_{Q_8}(G)$, and $K=\ZZ/3\leq\Aut(Q_8)$.  Note that $Q_8$ is fully $K$-normalized in $\cF$.  Since $K$ is a $2'$-group, we have $N_{Q_8}^K(Q_8)=Z(Q_8)\cong\ZZ/2$.  If $\kappa$ is a generator for $K$, one easily checks that the minimal $\cF$-characteristic biset is $\Lambda_\cF=[Q_8,\id_{Q_8}]\amalg[Q_8,\kappa]\amalg[Q_8,\kappa^2]$.  One can further calculate that $N_{\Lambda_{\cF}}^K(Q_8)=3\cdot[Z(Q_8),\id_{Z(Q_8)}]$, contrary to the conclusion of Proposition \ref{good K-normalizer subbisets of minimal characteristic bisets contain exactly one minimal characteristic biset}.
\end{counter}

\begin{remark}
Counterexample \ref{Q_8 semidirect Z/3 counterexample} shows in particular that there must be some condition imposed on $K\leq\Aut(P)$ in general to guarantee that $N_{\Lambda_\cF}^K(P)=\Lambda_{N_\cF^K(P)}$.  We have seen that it is enough (when $P$ is $\cF$-centric) to assume that $K$ either contains or is contained in $\Inn(P)$.  While it is possible that one could find a larger class of subgroups of $\Aut(P)$ for which the conclusion of Theorem \ref{K-normalizers of minimal are minimal} holds, we have at least already covered the most important examples with our current formulation: If $K=\{\id\}$ or $K=\Aut(P)$ we get the minimal characteristic bisets $C_\Omega(P)$ and $N_\Omega(P)$ for the fusion systems $C_\cF(P)$ and $N_\cF(P)$, respectively.  We also cover the cases of the subsystems $Q\cdot C_\cF(Q)$ (on $Q\cdot C_S(P)$) and $N_P(Q)\cdot C_\cF(Q)$ (on $N_S(P)$) corresponding to the cases $K=\Inn(P)$ and $K=\Aut_S(P)$, respectively  (cf. \cite[Definition 3.1]{LinckelmannIntro}).
\end{remark}

\bibliography{Sources}
\bibliographystyle{alphanum}

\end{document}